\newtheorem{teo}{Theorem}
\newtheorem{lema}[teo]{Lemma}
\newtheorem{defi}[teo]{Definition}
\newtheorem{corol}[teo]{Remark}
\newcommand{\R}{\mathbb{R}}
\newcommand{\di}{\displaystyle}
\def\Wc{W^{s,G}_{0}(\Omega)}
\def\W{W^{s,G}}
\def\Wd{W^{-s,\widetilde{G}}(\Omega)}
\def\Gm{\widetilde{G}}
\def\Bm{\widetilde{B}}
\def\fm{\widetilde{f}}
\def\Hc{\textbf{H}_{0}}
\newcommand{\Rn}{\mathbb{R}^{n}}
\newcommand{\Rnn}{\mathbb{R}^{2n}}
\newcommand{\N}{\mathbb{N}}
\def\t{{\tau_{\varepsilon}}}
\def\uarr{\overline{u}}
\def\uab{\underline{u}}
\def\O{\Omega}
\def\Oc{\overline{\Omega}}
\def\Fi{\varphi}
\def\glap{(-\bigtriangleup_{g})^{s}}
\def\Su{S(\uab, \uarr)}
\def\Mru{\overline{M}_{1}}
\def\Mrd{\overline{M}_{2}}
\def\Mrt{\overline{M}_{3}}
\def\Mri{\overline{M}_{i}}
\title{}
\author{}
\begin{document}
\title[$g$-fractional Laplacian]{Existence and Multiplicity of solutions for a Dirichlet problem  in fractional Orlicz-Sobolev  spaces}
	\author{Pablo Ochoa}

\address{Pablo Ochoa. Universidad Nacional de Cuyo. CONICET. Universidad J. A. Maza\\Parque Gral. San Mart\'in 5500\\
Mendoza, Argentina.}.
\email{pablo.ochoa@ingenieria.uncuyo.edu.ar}

\author{Anal\'ia Silva}

\address{Anal\'ia Silva. Instituto de Matem\'atica Aplicada San luis (IMASL),
Universidad Nacional de San Luis, CONICET \\Ejercito de los Andes
950, D5700HHW \\ San Luis,
Argentina\\analiasilva.weebly.com}
\email{acsilva@unsl.edu.ar}

\author{Maria Jos\'e Suarez Marziani}

\address{Maria Jos\'e Suarez Marziani. Instituto de Matem\'atica Aplicada San luis (IMASL),
Universidad Nacional de San Luis, CONICET \\Ejercito de los Andes
950, D5700HHW \\ San Luis,
Argentina} \email{mjsuarez@email.unsl.edu.ar}

%\subjclass {2020}{XX, XX; XX}
\subjclass[2020]{35J62; 46E30; 35D30, 58E05}

%35J62  	Quasilinear elliptic equations
%46E30  	Spaces of measurable functions ($L^p$-spaces, Orlicz spaces, Köthe function spaces, Lorentz spaces, rearrangement invariant spaces, ideal spaces, etc.)
%35D30      Weak solutions to PDEs
\keywords{Fractional Orlicz-Sobolev spaces, Existence of weak solutions, Critical point theory}
	\maketitle
	
	\begin{abstract}
		In this paper, we first prove the existence of solutions to Dirichlet problems involving the fractional $g$-Laplacian operator and lower order terms by appealing to sub- and supersolution methods. Moreover, we also state the existence of extremal solutions. Afterwards, and under additional assumptions on the lower order structure, we establish by variational techniques the existence of multiple solutions: one positive, one negative and one with non-constant sign.
	\end{abstract}
	
	\section{Introduction}
Let us consider the following non local and non standard growth problem
\begin{equation}\label{1.1}
		\left\{
		\begin{array}{ccll}
			(-\bigtriangleup_{g})^{s} u = f(x,u) &~~& \text{in}~~\O,\\
			\,\,\,\,\,u = 0&~~&\text{in}~~\O^{c},
		\end{array}
		\right.
	\end{equation}
where $s\in(0,1)$, $\Omega$ is a smooth and bounded domain in $\Rn$, and $(-\bigtriangleup_{g})^{s}$  is the fractional $g$-Laplacian defined for sufficiently smooth functions $u$ as
\begin{equation*}
			\glap u(x) = 2 p.v \int_{\Rn} g \left( \dfrac{u(x)-u(y)}{|x-y|^{s}} \right)\dfrac{dy}{|x-y|^{n+s}},
		\end{equation*}
		where $p.v.$ stands for 'in principal value' and $G'=g$ is an $N$-function. We recall the definition of $N$-function and its properties in Section 2. For a complete guide of this setting we  mention the now classical book \cite{KR}.\\
The fractional Orlicz-Sobolev spaces and their relation with the g-Laplacian operator were discussed in \cite{BoS}. These spaces are the appropriate functional framework for $(-\bigtriangleup_{g})^{s}$. For the reader convenience, we include some basic  definitions and properties  in Section 2.\\
 Throughout the paper,  we consider that $f$ has a subcritical growth in the sense of the Orlicz-Sobolev embedding \cite{AC}. The main feature on the non linear term $f$ is that no oddness condition is imposed.\\
As far as we know, the existence and multiplicity  of solutions for equations involving the fractional $g$-Laplacian operator have been approached recently. Indeed, in \cite{BBX}, they provide existence of a non-negative solution for fractional $g$-Laplacian problems with a source $f(x, u)$ having a power-growth in $u$. Moreover, in \cite{BMO} the existence of a nodal solution, that is, a changing-sing solution, was given. Regarding multiple solutions for $\glap$, we quote the work \cite{BOT}, for the existence of two non-trivial solutions, and  \cite{BO}, where infinitely many solutions are obtained for a class of non-local Orlicz-Sobolev Schr\"{o}dinger equations. \\Inspired by \cite{Base}, we obtain an existence result using the sub-supersolution method. More precisely,
	we consider a sub-supersolution pair $(\uab, \uarr)$ and we define the set \textit{S}$(\uab, \uarr)$ as follows
	\begin{equation*}
	\text{\textit{S}} (\uab, \uarr) = \left\lbrace u \in \Wc \colon \text{$u$ is a solution of \eqref{1.1}}, ~\uab \leq u \leq \uarr \right\rbrace.
	\end{equation*}
One of the main result of the article  states that  if $(\uab, \uarr)$ is a sub-supersolution pair of \eqref{1.1},  then there exists $u \in$ \textit{S} $(\uab, \uarr)$, and hence \eqref{1.1} {admits a solution.

Finally, we show, under some growth conditions in the source, the existence of three different  nontrivial solutions for equation \eqref{1.1}. More precisely, these solutions are one positive, one negative and one with non-constant sign.  The method that we employ  was introduced in  \cite{St} and was useful in different context, see for example \cite{DPFBS,Silva,FB,Li, J06} for the  local case  and \cite{Csp} for the nonlocal context. This technique} consists in restricting the functional associated to \eqref{1.1} to three different manifolds constructed by sign  and  normalization conditions. Unlike \cite{St}, here we do not appeal to the Ljusternik-Schnirelman theory.  Instead, we use the  Ekeland variational principle to prove the existence of a critical point of each  restricted functional which turns out to be a critical point of the unrestricted functional and, consequently, is  also a weak solution of \eqref{1.1}.\\
The paper is organized as follows. In Section 2 we collect some basic facts of Orlicz-Sobolev spaces and the fractional $g$-Laplacian used throughout the manuscript. Section 3 is devoted to the study of the set \textit{S} $(\uab, \uarr)$. Finally, in section 4 we prove the multiplicity of weak solutions for  \eqref{1.1}.

\section{Preliminaries}
\subsection{N-functions}
In this section we introduce basic definitions and preliminary results related to Orlicz spaces. For more details see \cite{KR}. We start recalling the definition of an N-function.
	\begin{defi}\label{d2.1}
		A function $G \colon \R_+ \rightarrow \R_+$ is called an N-function if it admits the representation
		$$G(t)= \int _{0} ^{t} g(\tau) d\tau,$$
		where the function $g$ is right-continuous for $t \geq 0$,  positive for $t >0$, non-decreasing, and satisfies the conditions
		$$g(0)=0, \quad g(\infty)=\lim_{t \to \infty}g(t)=\infty.$$
		\end{defi}We extend $G$ to $\R$ as an even function. By \cite[Chapter 1, Sec. 5]{KR}, an N-function has also the following properties:
		\begin{enumerate}
			\item $G$ is  continuous, convex and  even.
	\item $G$ is super-linear at zero and at infinite, that is $$\di\lim_{x\rightarrow 0} \dfrac{G(x)}{x}=0$$and
	$$\di\lim_{x\rightarrow \infty} \dfrac{G(x)}{x}=\infty.$$
	
		\end{enumerate}Indeed, the above conditions serve as an equivalent definition of N-functions.
		
		An important property for N-functions is the following:
		\begin{defi}
		
			 We say that the N-function  G satisfies the $\bigtriangleup_{2}$ condition if  there exists $C > 2$ such that
			\begin{equation*}
				G(2x) \leq C G(x) \,\,\text{~~~for all~~} x \in \R_{+}.
			\end{equation*}
			\end{defi}
			According to \cite[Theorem 4.1, Chapter 1]{KR} a necessary and sufficient condition for an N-function to satisfy the $\bigtriangleup_{2}$ condition is that there is $p^{+} > 1$ such that
			\begin{equation}
		\frac{tg(t)}{G(t)} \leq p^{+}, ~~~~~\forall\, t>0.
	\end{equation}
	
	Associate to $G$ is  the N-function  complementary to it which is defined as follows:
		\begin{equation}\label{Gcomp}
			\widetilde{G} (t) := \sup \left\lbrace tw-G(w) \colon w>0 \right\rbrace .
		\end{equation}
	
The definition of the complementary function assures that the following Young-type inequality holds
	\begin{equation}\label{2.5}
		at \leq G(t)+\Gm(a) \text{  for every } a,t \geq 0.
	\end{equation}
	
	By \cite[Theorem 4.3,   Chapter 1]{KR}, a necessary and sufficient condition for the N-function $\Gm$ complementary to $G$ to satisfy the $\bigtriangleup_{2}$ condition is that there is $p^{-} > 1$ such that
			\begin{equation*}
	p^{-} \leq 	\frac{tg(t)}{G(t)}, ~~~~~\forall\, t>0.
	\end{equation*}

	From now on, we will assume that the N-function  $G(t)= \int _{0} ^{t} g(\tau) d\tau$  satisfies the following growth behavior:
	\begin{equation}\label{G1}
		1 < p^{-} \leq \frac{tg(t)}{G(t)} \leq p^{+} < \infty, ~~~~~\forall t>0.
	\end{equation}Hence, $G$ and $\Gm$ both satisfy the $\bigtriangleup_{2}$ condition. Observe that \eqref{G1} holds  for all $t \in \mathbb{R}, \, t \neq 0.$ Another useful condition (which implies \eqref{G1}) is the following:
	\begin{equation}\label{cG0}
p^{-}-1 \leq \dfrac{tg'(t)}{g(t)}\leq p^{+}-1, ~~~~~\forall t>0.
\end{equation}Again, \eqref{cG0} holds for any $t \in \mathbb{R},\, t \neq 0.$
	
%	where $p^{-}= \lambda + 1$ and $p^{+}= \Lambda +1$.
% It is easy to check that
%	\begin{equation}\label{2.4}
%		c_{1} \frac{t^{p^{-}}}{p^{-}} \leq G(t) \leq c_{2} \frac{t^{p^{+}}}{p^{+}} ~~~~~for~~~t \geq 0,
%	\end{equation}
%	for some constants $c_{1}, c_{2} > 0$.\\
%\color{red}
Examples of functions $G$ verifying the above conditions are:
\begin{itemize}
\item $G(t)=|t|^p$, $p >1$;
\item $G(t)= |t|^p+|t|^q$, $p, q >1$;
\item $G(t)= t^{p}(|\log(t)|+1) $, $p > (3+\sqrt{5})/2$;
\item $G(t)=\int_0^t\left( p|s|^{p-2}s(|\log(s)|+1) + \frac{|s|^{p-2}s}{1+|s|} \right)\,ds$, where $p^+= p+1$ and $p^-=p-1$ (see Remark 1 in \cite{Oc}).
\end{itemize}
 As a final assumption, we will finally impose that $g'$ is non-decreasing. \normalcolor
\subsection{Orlicz spaces}
Now, we  introduce the definition of the  fractional Orlicz-Sobolev spaces. For more details on this functional setting, see \cite{BoS}.
	\begin{defi} Let $s \in (0, 1)$
		and  $\O \subseteq \Rn$ be an open set. We define
		\begin{equation*}
			L^{G} (\O) :=  \left\lbrace u \colon \O \rightarrow \R,\,u \,\,is~~measurable\,\,and\,\,  \rho_{G,\O}(u) < \infty \right\rbrace
		\end{equation*}and
		\begin{equation*}
			\W (\O) := \left\lbrace u \in L^{G}(\O) \colon \rho_{s,G,\O}(u)< \infty \right\rbrace,
		\end{equation*}
		where the modulars $\rho_{G,\O}$ and $\rho_{s,G, \O}$ are defined as
		\begin{equation*}
				\rho_{G,\O}(u) :=  \di\int_{\O} G \left( |u(x)| \right) dx,
		\end{equation*}
		\begin{equation*}
				\rho_{s,G,\O} (u) := \di\iint_{\O\times\O} G \left( \dfrac{|u(x)-u(y)|}{|x-y|^{s}} \right) d\mu, \\
		\end{equation*}
		with
		\begin{equation*}
			%\begin{array}{rl}
				d\mu := \dfrac{dxdy}{|x-y|^{n}}.
				%D_{s}u(x,y) &= \dfrac{u(x)-u(y)}{|x-y|^s}.
			%\end{array}
		\end{equation*}The norm associated to these spaces is the Luxemburg type norm
		\begin{equation*}
			||u||_{s,G,\O} := ||u||_{G,\O} + [u]_{s,G, \O}
		\end{equation*}
		where
		\begin{equation*}
				||u||_{G,\O} := \di\inf \left\lbrace  \lambda > 0 \colon \rho_{G,\O}\left(\dfrac{u}{\lambda}\right) \leq 1 \right\rbrace
		\end{equation*}
	and
		\begin{equation*}		
				\left[ u \right] _{s,G, \O} := \di\inf \left\lbrace \lambda > 0 \colon \rho_{s,G, \O}\left(\dfrac{u}{\lambda}\right) \leq 1 \right\rbrace .
		\end{equation*}When $\O=\Rn$, we will omit the dependence on $\O$ in the above quantities.\end{defi}

For further reference, we state  the H\"older inequality.
\begin{lema} For $u \in L^{G}(\O)$ and $v\in L^{\Gm}(\O)$ there holds
		\begin{equation*}
			\int_{\O} |uv| dx \leq 2||u||_{G,\O} ||v||_{\Gm,\O}.
		\end{equation*}
	\end{lema}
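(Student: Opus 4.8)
The plan is to derive the inequality from the Young-type estimate \eqref{2.5} after normalising $u$ and $v$ by their Luxemburg norms. First, dispose of the trivial situations: if $u=0$ a.e.\ or $v=0$ a.e., both sides vanish and there is nothing to prove, so assume $\lambda_{1}:=\|u\|_{G,\O}>0$ and $\lambda_{2}:=\|v\|_{\Gm,\O}>0$. Since $u\in L^{G}(\O)$ and $G$ is convex with $G(0)=0$, for $\lambda\geq 1$ one has $G(|u|/\lambda)\leq \lambda^{-1}G(|u|)$, so by dominated convergence $\rho_{G,\O}(u/\lambda)\to 0$ as $\lambda\to\infty$; hence $\lambda_{1}<\infty$, and the same reasoning (using that $\Gm$ is also an N-function, hence convex with $\Gm(0)=0$) gives $\lambda_{2}<\infty$.

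Next I would verify that the infimum defining each Luxemburg norm is attained, i.e.\ that $\rho_{G,\O}(u/\lambda_{1})\leq 1$ and $\rho_{\Gm,\O}(v/\lambda_{2})\leq 1$. To see this, choose a sequence $\lambda_{k}\downarrow\lambda_{1}$ with $\rho_{G,\O}(u/\lambda_{k})\leq 1$; since $G$ is continuous and even, $G(|u(x)|/\lambda_{k})\to G(|u(x)|/\lambda_{1})$ for a.e.\ $x$, and Fatou's lemma yields
\[
\rho_{G,\O}\!\left(\frac{u}{\lambda_{1}}\right)\leq \liminf_{k\to\infty}\rho_{G,\O}\!\left(\frac{u}{\lambda_{k}}\right)\leq 1 .
\]
The corresponding statement for $v$ and $\Gm$ follows by the identical argument.

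Finally, apply the Young-type inequality \eqref{2.5} pointwise for a.e.\ $x\in\O$ with the choices $t=|u(x)|/\lambda_{1}$ and $a=|v(x)|/\lambda_{2}$, obtaining
\[
\frac{|u(x)|\,|v(x)|}{\lambda_{1}\lambda_{2}}\leq G\!\left(\frac{|u(x)|}{\lambda_{1}}\right)+\Gm\!\left(\frac{|v(x)|}{\lambda_{2}}\right).
\]
Integrating over $\O$ and invoking the two modular bounds established above,
\[
\frac{1}{\lambda_{1}\lambda_{2}}\int_{\O}|u(x)v(x)|\,dx\leq \rho_{G,\O}\!\left(\frac{u}{\lambda_{1}}\right)+\rho_{\Gm,\O}\!\left(\frac{v}{\lambda_{2}}\right)\leq 2,
\]
and multiplying through by $\lambda_{1}\lambda_{2}=\|u\|_{G,\O}\|v\|_{\Gm,\O}$ gives the claim (this also shows $\int_{\O}|uv|\,dx<\infty$). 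The only step requiring any care is the attainment of the Luxemburg-norm infimum, which is the short lower-semicontinuity argument via Fatou above; everything else is a direct pointwise estimate followed by integration.
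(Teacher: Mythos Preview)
Your proof is correct and is precisely the standard argument for H\"older's inequality in Orlicz spaces: normalise by the Luxemburg norms, use Fatou to see the defining infimum is attained so that the normalised modulars are $\leq 1$, and then integrate the pointwise Young inequality \eqref{2.5}. The paper itself does not supply a proof for this lemma; it is simply stated as a known result (the classical reference being \cite{KR}), so there is no alternative argument to compare against.
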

The topological dual space of $\W (\O)$ is denoted by $W^{-s, \widetilde{G}} (\O) $. We recall that when the N-function $G$ satisfies the $\bigtriangleup_{2}$ condition, then $L^{G}(\Rn)$ and $W^{s,G}(\Rn)$ are reflexive, separable Banach spaces (see \cite[Chapter 8]{Embedding}, \cite[Proposition 2.11]{BoS} and \cite[Theorem 8.2]{KR}).\\

		Also, to include boundary values, we define the space
		\begin{equation*}
			\Wc := \left\lbrace u \in W^{s,G}(\Rn) \colon u=0~~\text{in}~~\O^{c} \right\rbrace,
		\end{equation*}and the space of test functions
$$E:=\overline{C_c^\infty(\Omega)} \subset W^{s, G}(\Rn)$$where the closure is taking with respect to the norm of $\|\cdot\|_{s, G}$. Observe that $E \subset \Wc$. 

	Next, we mention some useful lemma for N-functions, which allows comparison with power functions.

	\begin{lema}\cite[Lemma 2.5]{BaS}\label{norm and modular}
		Let $$\xi^{-}(t) = \min\left\lbrace t^{p^{-}} , t^{p^{+}}\right\rbrace$$and $$\xi^{+}(t) =
		\max\left\lbrace t^{p^{-}}, t^{p^{+}}\right\rbrace.$$Then,
		\begin{enumerate}
			\item $\xi^{-} (||u||_{G,\O}) \leq \rho_{G,\O} (u) \leq \xi^{+} (||u||_{G,\O})$, for $u \in L^{G}(\O)$.
			\item $\xi^{-} (\left[ u \right] _{s,G}) \leq \rho_{s,G} (u) \leq \xi^{+} (\left[ u \right] _{s,G})$, for $u \in W^{s,G}(\O)$.
		\end{enumerate}
	\end{lema}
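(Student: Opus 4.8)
The plan is to reduce both statements to a single scaling estimate for $G$ that follows from the growth condition \eqref{G1}, and then to exploit the defining property of the Luxemburg norm together with the $\bigtriangleup_{2}$ condition.

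\emph{Step 1 (scaling estimate for $G$).} I would first show that \eqref{G1} implies
\begin{equation*}
\min\{\lambda^{p^{-}},\lambda^{p^{+}}\}\,G(t)\le G(\lambda t)\le \max\{\lambda^{p^{-}},\lambda^{p^{+}}\}\,G(t),\qquad \lambda,t>0.
\end{equation*}
Fixing $t>0$ and setting $\varphi(\lambda):=G(\lambda t)$, the function $\varphi$ is absolutely continuous with $\varphi'(\lambda)=t\,g(\lambda t)$, so \eqref{G1} reads $p^{-}\le \lambda\varphi'(\lambda)/\varphi(\lambda)\le p^{+}$, i.e. $p^{-}/\lambda\le (\log\varphi)'(\lambda)\le p^{+}/\lambda$. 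Integrating between $1$ and $\lambda$ and distinguishing $\lambda\ge 1$ from $0<\lambda<1$ gives the displayed bounds.

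\emph{Step 2 (attainment of the modular on the unit sphere).} Since $G$ satisfies $\bigtriangleup_{2}$, for $u\in L^{G}(\O)$ with $u\neq 0$ the map $\lambda\mapsto \rho_{G,\O}(u/\lambda)$ is finite, continuous and strictly decreasing on $(0,\infty)$, whence the infimum defining $\|u\|_{G,\O}$ is attained and $\rho_{G,\O}\!\big(u/\|u\|_{G,\O}\big)=1$. The same argument applied to the Gagliardo-type modular $\rho_{s,G}$, which also inherits $\bigtriangleup_{2}$, yields $\rho_{s,G}\!\big(u/[u]_{s,G}\big)=1$ whenever $[u]_{s,G}>0$. (If one wishes to avoid the attainment claim, one may instead take $\lambda\downarrow\|u\|_{G,\O}$ through values with $\rho_{G,\O}(u/\lambda)\le 1$ for one inequality and $\lambda\uparrow\|u\|_{G,\O}$ through values with $\rho_{G,\O}(u/\lambda)>1$ — possible precisely by $\bigtriangleup_{2}$ — for the other.)

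\emph{Step 3 (conclusion).} For (1), write $a:=\|u\|_{G,\O}$; the case $a=0$ is trivial. For $a>0$, apply Step 1 with $\lambda=a$ and $t$ replaced pointwise by $|u(x)|/a$, and integrate over $\O$ using $\rho_{G,\O}(u/a)=1$ from Step 2:
\begin{equation*}
\min\{a^{p^{-}},a^{p^{+}}\}\le \rho_{G,\O}(u)\le \max\{a^{p^{-}},a^{p^{+}}\}.
\end{equation*}
If $a\ge 1$ this reads $a^{p^{-}}\le \rho_{G,\O}(u)\le a^{p^{+}}$, i.e. $\xi^{-}(a)\le\rho_{G,\O}(u)\le\xi^{+}(a)$; if $0<a<1$ it reads $a^{p^{+}}\le \rho_{G,\O}(u)\le a^{p^{-}}$, which is again $\xi^{-}(a)\le\rho_{G,\O}(u)\le\xi^{+}(a)$. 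Part (2) is obtained verbatim, with $\rho_{G,\O}$ replaced by $\rho_{s,G}$, $a$ by $[u]_{s,G}$, and the single integral over $\O$ by the double integral against $d\mu$; here Step 1 is applied pointwise to the difference quotient $|u(x)-u(y)|/|x-y|^{s}$.

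\emph{Main obstacle.} Steps 1 and 3 are essentially formal. The only point requiring care is Step 2 — the identity $\rho(u/\|u\|)=1$ — which genuinely uses the $\bigtriangleup_{2}$ condition (guaranteeing the modular is finite near the norm and does not remain $\le 1$ below it); without $\bigtriangleup_{2}$ only one-sided estimates survive. Everything else follows from the elementary integration in Step 1 and Fubini/monotone convergence for the modular integrals.
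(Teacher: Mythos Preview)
Your proof is correct and follows the standard route for relating Luxemburg norms to modulars in Orlicz spaces: the scaling inequality for $G$ from \eqref{G1}, the fact that under $\bigtriangleup_2$ the modular equals $1$ on the unit sphere, and then the substitution $\lambda=\|u\|$. Note, however, that the paper does not give its own proof of this lemma --- it is simply quoted from \cite[Lemma 2.5]{BaS} --- so there is nothing to compare against here beyond observing that your argument is the expected one.
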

Now, we introduce a notion of comparison between $N$- functions.
\begin{defi}
		Given two $N$-functions $A$ and $B$, we say that $B$ is essentially larger than $A$, denoted by
$A \ll B,$ if for any $c > 0$,
		\begin{equation*}
			\di\lim_{t \rightarrow \infty} \dfrac{A(ct)}{B(t)}=0.
		\end{equation*}
		%A very important and useful property is the so-called $\bigtriangleup_{2}$-condition.
	\end{defi}
%	\begin{defi}
%		We say that a Young function $A$ satisfies the $\bigtriangleup_{2}$-condition if
%		\begin{equation*}
%			A(2t) \leq C A(t)
%		\end{equation*}
%		for all $t \geq 0$ for a fixed positive constant $C>1$.
%	\end{defi}
	
	In order for the Sobolev immersion theorem to hold, it is necessary that G verifies the following two conditions:
\begin{itemize}
\item[(G1)]
		$\displaystyle\int_{M}^{\infty} \left( \frac{t}{G(t)} \right) ^{\frac{s}{n-s}} dt = \infty$ for some $M$.
\item[(G2)] $\displaystyle\int_{0}^{\delta} \left( \frac{t}{G(t)} \right) ^{\frac{s}{n-s}} dt < \infty$
	for some $\delta > 0$.
\end{itemize}

Observe that when $G(t)=|t|^{p}$, then assumptions (G1) and (G2) are satisfied if $sp < n$. More examples can be seen in Remark 1 in \cite{Oc}.
	
	Given  $G$ satisfying (G1) and (G2) we define its Orlicz-Sobolev conjugate	as
	\begin{equation}\label{2.5A}
		G_{\frac{n}{s}} (t) = G \circ H^{-1}(t),
	\end{equation}
	where
	\begin{equation*}
		H(t) = \left( \int_{0}^{t} \left( \frac{\tau}{G(\tau)} \right) ^{\frac{s}{n-s}} d\tau \right) ^{\frac{n-s}{n}}.
	\end{equation*}
	Hence, we have the following embedding theorem, see \cite{AC}.
	\begin{teo}\label{embedding}
		Let $G$ be an N-function satisfying \eqref{G1},    (G1) and (G2), and let $G_{\frac{n}{s}}$ be defined in \eqref{2.5A}. Then the embedding $W^{s,G}_{0} (\O) \hookrightarrow L^{G_{\frac{n}{s}}}(\O)$ is continuous. Moreover, the N-function  $G_{\frac{n}{s}}$ is optimal in the class of Orlicz spaces.
		Finally, given  any  N-function $B$, the embedding $W^{s,G}_{0} (\O) \hookrightarrow L^{B}(\O)$ is compact if and only if $B \ll G_{\frac{n}{s}}$.
	\end{teo}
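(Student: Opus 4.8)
The plan is to establish the continuous embedding by a symmetrization argument, and then to read off the optimality of $G_{\frac{n}{s}}$ and the compactness criterion from the sharp form of that inequality; for the fully detailed execution I would ultimately refer to \cite{AC}. Since every $u \in \Wc$ already lies in $\W(\Rn)$, no extension step is needed. The first ingredient I would prove is a fractional P\'olya--Szeg\H{o} principle adapted to Orlicz moduli: if $u^{*}$ denotes the spherically symmetric decreasing rearrangement of $u$, then $\rho_{s,G}(u^{*}) \le \rho_{s,G}(u)$, while $\rho_{B}(u^{*}) = \rho_{B}(u)$ for every $N$-function $B$ by equimeasurability. This reduces the target estimate $\rho_{G_{\frac{n}{s}},\O}(u) \le C\,\rho_{s,G}(u)$ (say when $\rho_{s,G}(u) \le 1$) to the case of a radially non-increasing function. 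Writing $u^{*}(x) = \varphi(|x|)$ with $\varphi$ non-increasing and $\varphi(r) \to 0$ as $r \to \infty$, I would bound $\rho_{s,G}(u^{*})$ from below — keeping only the pairs $(x,y)$ with $|x| \simeq r$ and $|x-y| \simeq r$ — by a one-dimensional integral of the type $\int_{0}^{\infty} G\big((\varphi(r)-\varphi(2r))\,r^{-s}\big)\, r^{n}\,\tfrac{dr}{r}$. Telescoping $\varphi(r) = \sum_{k \ge 0}\big(\varphi(2^{k}r) - \varphi(2^{k+1}r)\big)$ and using the convexity of $G$ together with the $\bigtriangleup_{2}$-type growth \eqref{G1} to sum the resulting geometric series, one reaches a pointwise inequality of the form $G_{\frac{n}{s}}(\varphi(r))\,r^{n} \lesssim (\text{the local contribution of the modular at scale } r)$; integrating in $r$ yields $\rho_{G_{\frac{n}{s}}}(u^{*}) \lesssim \rho_{s,G}(u^{*})$. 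The change of variables that closes this computation is exactly the one built into the function $H$ of \eqref{2.5A}, and conditions (G1) and (G2) are precisely what guarantee that $H$, and hence $G_{\frac{n}{s}} = G \circ H^{-1}$, is a genuine $N$-function.

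For the optimality of $G_{\frac{n}{s}}$ I would test the inequality along radial profiles that saturate the one-dimensional estimate above (suitably truncated so as to be supported in $\O$): their $\|\cdot\|_{s,G}$-norms stay bounded while $\rho_{B}$ blows up whenever $B$ grows essentially faster than $G_{\frac{n}{s}}$, which shows simultaneously that no strictly smaller Orlicz target space is admissible and that $\Wc \hookrightarrow L^{B}(\O)$ cannot be compact unless $B \ll G_{\frac{n}{s}}$. For the converse direction of the compactness statement, given a bounded sequence $(u_{k}) \subset \Wc$, Lemma \ref{norm and modular} bounds it in a fractional Sobolev space modelled on a power $N$-function, so the classical fractional Rellich--Kondrachov theorem on the bounded set $\O$ furnishes a subsequence converging a.e.; since $(u_{k})$ is also bounded in $L^{G_{\frac{n}{s}}}(\O)$ and $B \ll G_{\frac{n}{s}}$, the family $\{B(|u_{k}|)\}_{k}$ is uniformly integrable (de la Vall\'ee--Poussin), and Vitali's convergence theorem then upgrades the a.e. convergence to $\rho_{B}(u_{k} - u) \to 0$, that is, convergence in $L^{B}(\O)$.

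The main obstacle is twofold. First, making the Orlicz-valued fractional P\'olya--Szeg\H{o} inequality rigorous: unlike the gradient, the Gagliardo-type seminorm does not linearize under rearrangement, so one has to argue through the layer-cake representation $G(t) = \int_{0}^{t} g$, apply Riesz/Almgren--Lieb-type rearrangement inequalities at each level set, and pass to the limit by monotone convergence. Second, the one-dimensional reduction that produces precisely the exponent $s/(n-s)$ and the conjugate $G_{\frac{n}{s}}$ together with its sharpness — it is this matching of growths, rather than any single estimate, that forms the technical core, and for it I would rely on \cite{AC}.
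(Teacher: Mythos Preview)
The paper does not give its own proof of this theorem: it is simply quoted from \cite{AC} (``Hence, we have the following embedding theorem, see \cite{AC}''), so there is nothing to compare your argument against. Your sketch is a reasonable outline of the symmetrization/Hardy-type strategy behind \cite{AC}, and you correctly defer the technical core there; for the purposes of this paper, a bare citation suffices.
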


\subsection{Elementary inequalities}	
In this subsection we collect some elementary inequalities for  N-functions $G$  satisfying \eqref{G1}. We give the proofs of those which, up to our knowledge,  are new in the literature.

\begin{lema}\label{Lema2.2}\cite[Lemma 2.9]{BoS}
		Let $G$ be an N-function satisfying \eqref{G1} such that $g=G'$. Then
		\begin{equation*}
			\Gm (g(t)) \leq (p^{+}-1) G(t)
		\end{equation*}
		holds for any $t \geq 0$.
	\end{lema}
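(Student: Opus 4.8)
The plan is to compute $\widetilde{G}(g(t))$ directly from its defining variational formula and exploit the growth bound in~\eqref{G1}. By definition of the complementary function, $\widetilde{G}(g(t)) = \sup_{w>0}\bigl(g(t)w - G(w)\bigr)$. The standard fact from convex analysis (Young's equality case) is that the supremum is attained at $w = t$, since $G' = g$ is nondecreasing and right-continuous: indeed, $\frac{d}{dw}\bigl(g(t)w - G(w)\bigr) = g(t) - g(w)$, which is $\ge 0$ for $w \le t$ and $\le 0$ for $w \ge t$. Hence $\widetilde{G}(g(t)) = g(t)\,t - G(t)$. This is the key identity, and it is elementary; I would state it carefully, perhaps noting that one only needs the inequality $\widetilde{G}(g(t)) \le tg(t) - G(t)$, which follows instantly since $w = t$ is an admissible choice in the supremum — wait, that gives the wrong direction, so one genuinely needs that $t$ is the maximizer, using monotonicity of $g$ to check $g(t)w - G(w) \le g(t)t - G(t)$ for all $w$ via $G(t) - G(w) = \int_w^t g \le g(t)(t-w)$ when $w<t$ and similarly $G(w)-G(t) = \int_t^w g \ge g(t)(w-t)$ when $w>t$.

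Once the identity $\widetilde{G}(g(t)) = t g(t) - G(t)$ is in hand, the conclusion is immediate: the upper bound in~\eqref{G1} says $t g(t) \le p^+ G(t)$ for all $t > 0$, so
\begin{equation*}
\widetilde{G}(g(t)) = t g(t) - G(t) \le p^+ G(t) - G(t) = (p^+ - 1) G(t),
\end{equation*}
and the case $t = 0$ is trivial since $g(0) = 0$ and $\widetilde{G}(0) = 0$. This completes the argument.

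The only mild subtlety — and the step I would be most careful about — is justifying that the supremum in the definition of $\widetilde{G}$ is actually attained at $w = t$ rather than merely bounded by the value there. Because $g$ is only assumed right-continuous and nondecreasing (not strictly increasing or continuous), one should phrase the comparison through the integral representation $G(t) = \int_0^t g(\tau)\,d\tau$ and the elementary monotonicity inequalities for integrals of nondecreasing functions, rather than through differentiation. This is routine but worth spelling out to keep the proof self-contained. Everything else is a one-line substitution of the structural hypothesis~\eqref{G1}.
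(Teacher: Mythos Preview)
Your argument is correct and is the standard one: establish the Young equality $\widetilde{G}(g(t)) = t g(t) - G(t)$ by showing via monotonicity of $g$ that $w = t$ maximizes $g(t)w - G(w)$, then apply the upper bound in~\eqref{G1}. The paper does not supply its own proof of this lemma but simply cites \cite[Lemma~2.9]{BoS}, so there is nothing further to compare; your write-up is exactly what one would expect that cited proof to contain.
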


	\begin{lema}
		Let $a, b \in \R$. Then we have
		\begin{equation}\label{A2}
			G \left( |a_{+}-b_{+}| \right) \leq (a_{+}-b_{+}) g(a-b),
		\end{equation}
 where $a_+=\max\{a,0\}.$
	\end{lema}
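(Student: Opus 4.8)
The plan is to prove the pointwise inequality
\begin{equation*}
G\left(|a_{+}-b_{+}|\right) \leq (a_{+}-b_{+})\,g(a-b)
\end{equation*}
by exploiting the convexity of $G$ together with the basic inequality $G(t)\le tg(t)$, which follows from the left-hand side of \eqref{G1} (indeed $p^- > 1$ forces $G(t)\le tg(t)/p^- \le tg(t)$), and by a careful case analysis on the signs of $a$ and $b$. The key structural observation is the elementary fact that $|a_{+}-b_{+}| \leq |a-b|$ and, more precisely, that $a_{+}-b_{+}$ and $a-b$ always have the same sign, with $|a_{+}-b_{+}|\le |a-b|$; this is the standard $1$-Lipschitz property of $t\mapsto t_{+}$.

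First I would dispose of the trivial cases: if $a_{+}=b_{+}$ the left-hand side is $G(0)=0$ and the right-hand side is $0$, so equality holds. By symmetry of the roles (swapping $a\leftrightarrow b$ changes both sides by a sign in the factor but the inequality is symmetric under this swap since $G$ is even), I may assume $a_{+}>b_{+}$, hence $a_{+}-b_{+}>0$ and in particular $a>0$. There are then two subcases. If $b\ge 0$, then $a_{+}-b_{+}=a-b>0$, and the claim reduces to $G(a-b)\le (a-b)g(a-b)$, which is exactly $G(t)\le t g(t)$ with $t=a-b>0$. If $b<0$, then $b_{+}=0$, so $a_{+}-b_{+}=a$, while $a-b = a + |b| > a > 0$; since $g$ is non-decreasing we have $g(a-b)\ge g(a)$, and therefore
\begin{equation*}
(a_{+}-b_{+})g(a-b) = a\,g(a-b) \geq a\,g(a) \geq G(a) = G(|a_{+}-b_{+}|),
\end{equation*}
again using $G(t)\le tg(t)$. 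This exhausts all cases.

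The only mildly delicate point — and the one I would state carefully rather than wave at — is making sure the sign bookkeeping is airtight, namely that after the reduction to $a_{+}>b_{+}$ we genuinely have $a>0$ and the quantity $a-b$ is positive in every subcase, so that monotonicity of $g$ and the inequality $G(t)\le tg(t)$ (which I only have for $t\ge 0$) are legitimately applicable. There is no real analytic obstacle here: the proof is a short combination of convexity (through $G(t)\le tg(t)$), monotonicity of $g$, and the Lipschitz property of the positive part. I expect the write-up to be three or four lines once the case split is laid out.
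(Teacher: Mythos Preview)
Your proposal is correct and follows essentially the same approach as the paper: a case analysis on the signs of $a$ and $b$, combined with the inequality $G(t)\le tg(t)$ (the paper writes it as $G(t)\le \frac{1}{p^-}tg(t)$, but only uses $p^-\ge 1$) and the monotonicity of $g$. The only cosmetic difference is that you invoke the symmetry under $a\leftrightarrow b$ (via oddness of $g$) to cut the number of cases in half, whereas the paper treats the mirror cases separately; both routes are equally short.
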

	\begin{proof}
		Let $a,b \in \R$.
		If $a \geq 0$ and $b \leq 0$, then
		\begin{equation}\label{eqq 1}
			G(|a_{+}-b_{+}|)=G(a_{+}) \leq \frac{a_{+}}{p^{-}}g(a_{+}) \leq (a_{+}-b_{+})g(a-b),
		\end{equation}where the last inequality follows from the fact that $g=G'$ is non-decreasing.
		The case $b_{+}\geq 0$ and $a_{+}=0$ is similar.
		
		Now, let $a_{+},b_{+}>0$, i.e. $a,b>0$. Then, if $a-b=a_{+}-b_{+}\geq 0$, we obtain from \eqref{eqq 1}  that
		\begin{equation*}
			G(|a_{+}-b_{+}|)=G(a_{+}-b_{+})\leq \frac{1}{p^{-}}(a_{+}-b_{+})g(a_{+}-b_{+}) \leq (a_{+}-b_{+})g(a-b).
		\end{equation*}
		If $a-b=a_{+}-b_{+} < 0$, and recalling that g is odd, we deduce
		\begin{equation*}
				G(|a_{+}-b_{+}|)= G(b_{+}-a_{+}) \leq \frac{1}{p^{-}} (b_{+}-a_{+})g(b_{+}-a_{+}) \leq (a_{+}-b_{+})g(a-b).
		\end{equation*}
	\end{proof}

	\begin{lema}\label{ineq g derivada} Let $G$ be an $N$-function satisfying  whose derivative $g$ satisfies \eqref{cG0}.
	%\begin{equation}\label{cG0}
%p^{-}-1 \leq \dfrac{tg'(t)}{g(t)}\leq p^{+}-1.
%\end{equation}
Then, for all $a, b \in \mathbb{R}$ it holds
$$g'(a-b)(a_{+}-b_{+})(a_{+}-b_{+}) \leq (p^{+}-1)g(a-b)(a_{+}-b_{+}).$$
	\end{lema}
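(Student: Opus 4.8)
The plan is to reduce the inequality to a case analysis on the signs of $a$ and $b$, exactly mirroring the structure used in the proof of \eqref{A2}. Writing $t := a-b$, the claimed inequality reads
\[
g'(t)\,(a_{+}-b_{+})^{2} \le (p^{+}-1)\,g(t)\,(a_{+}-b_{+}),
\]
so after dividing by $a_{+}-b_{+}$ (when it is nonzero; the inequality is trivial otherwise, since both sides vanish) it suffices to show
\[
g'(t)\,(a_{+}-b_{+}) \le (p^{+}-1)\,g(t).
\]
The right-hand side should be compared with the second half of \eqref{cG0}, which gives $t g'(t) \le (p^{+}-1)\,g(t)$ for all $t \ne 0$ (and trivially at $t=0$). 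Thus the whole statement will follow once I check that $g'(t)\,(a_{+}-b_{+})$ and $g'(t)\,t$ are comparable in the right direction in each sign regime, using that $g' \ge 0$ (this is where the standing hypothesis that $g'$ is non-decreasing, together with $g'(0)=0$ coming from \eqref{cG0}, or at least $g'\ge 0$, is used).

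First I would dispose of the easy cases. If $a \ge 0$ and $b \le 0$, then $a_{+}-b_{+}=a_{+}=a \le a-b = t$, and since $t \ge 0$ and $g'(t)\ge 0$ we get $g'(t)(a_{+}-b_{+}) \le g'(t)\,t \le (p^{+}-1)g(t)$ by \eqref{cG0}. The symmetric case $a \le 0$, $b \ge 0$ gives $a_{+}-b_{+}=-b_{+}=b \le 0$, hence $a_{+}-b_{+}$ and $t=a-b\le 0$ have the same sign and $|a_{+}-b_{+}|=|b|\le|a-b|=|t|$; multiplying the inequality $g'(t)|a_{+}-b_{+}| \le g'(t)|t| = |t g'(t)| \le (p^{+}-1)g(|t|)$ (using that $g'\ge 0$ and $t g'(t)$ has the sign of $t$) by the appropriate sign recovers the claim. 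If $a\le 0$ and $b\le 0$, then $a_{+}-b_{+}=0$ and there is nothing to prove.

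The main case is $a>0$, $b>0$, so $a_{+}-b_{+}=a-b=t$ and the asserted inequality becomes precisely
\[
g'(t)\,t^{2} \le (p^{+}-1)\,g(t)\,t,
\]
equivalently $t g'(t) \le (p^{+}-1) g(t)$ after cancelling $t$ (valid for $t>0$; for $t<0$ divide by $t<0$ and reverse, or just note both sides are even-type expressions controlled by \eqref{cG0} applied at $|t|$). This is exactly \eqref{cG0}. I expect essentially no obstacle here: the only mildly delicate point is bookkeeping the signs when $a_{+}-b_{+}<0$, since then one is multiplying an inequality by a negative quantity, and making sure the factor $g'(t)$ is genuinely nonnegative — which is guaranteed because $g$ is non-decreasing (indeed $g'\ge 0$, and in fact the left inequality in \eqref{cG0} forces $g'(t)\ge (p^{-}-1)g(t)/t \ge 0$ for $t>0$). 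One should also state at the outset that $g$ being odd (as used already in the proof of \eqref{A2}) makes $g'$ even, so \eqref{cG0} transfers from $t>0$ to $t<0$ without change of constant.
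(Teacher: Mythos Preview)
Your proposal is correct and follows essentially the same route as the paper: a sign-based case analysis on $a$ and $b$ that reduces the inequality to \eqref{cG0}, using that $g'$ is even and nonnegative and that $|a_{+}-b_{+}|\le |a-b|$. One presentational caveat: your opening ``divide by $a_{+}-b_{+}$'' step is only valid when that quantity is positive (for $a_{+}-b_{+}<0$ the reduced inequality $g'(t)(a_{+}-b_{+})\le (p^{+}-1)g(t)$ is not what you need), but since you immediately drop this framing and handle each sign regime directly---exactly as the paper does---the argument stands.
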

	\begin{proof} The lemma is true if $a, b < 0$. Suppose that $a, b \geq 0$. If $a\geq b$, then, by \eqref{cG0},
	$$g'(a-b)(a_{+}-b_{+})(a_{+}-b_{+}) =g'(a-b)(a-b)(a_{+}-b_{+})  \leq (p^{+}-1)g(a-b)(a_{+}-b_{+}).$$If $a < b$,
	$$g'(a-b)(a_{+}-b_{+})(a_{+}-b_{+})= g'(b-a)(b-a)(b_{+}-a_{+}) \leq (p^{+}-1)g(b-a)(b_{+}-a_{+}).$$On the other hand, if $a < 0$ and $b> 0$, we have
	$$g'(a-b)(a_{+}-b_{+})(a_{+}-b_{+})= g'(a-b)(-b)(-b_{+}) \leq g'(b-a)(b-a)(b_{+}-a_{+})$$and we conclude as before. The case $a > 0$ and $b< 0$ is treated similarly.
	\end{proof}

	\begin{lema}\label{Lema2.1}\cite[Lemma 2.1]{MSV}
		 Assume \eqref{cG0} and that $g'$ is non-decreasing.  There exists $C > 0$ such that
		\begin{equation}\label{2.1}
			g(b)-g(a) \geq C g(b-a),
		\end{equation}
	for all $b \geq a$.
	\end{lema}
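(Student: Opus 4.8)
The goal is the inequality $g(b)-g(a)\ge C\,g(b-a)$ for all $b\ge a$, assuming the logarithmic bound \eqref{cG0} on $g$ together with the monotonicity of $g'$. Since the cited reference \cite{MSV} establishes the analogue for the $p$-Laplacian, I expect the argument here to be a reduction to a one-variable estimate via the fundamental theorem of calculus: write
\begin{equation*}
g(b)-g(a) = \int_{a}^{b} g'(t)\,dt,
\end{equation*}
so the claim amounts to comparing $\int_a^b g'(t)\,dt$ with $g(b-a)=\int_0^{b-a} g'(t)\,dt$. Because $g$ is odd (it is extended as $G'$ with $G$ even), it suffices to treat the case $b-a\ge 0$; and by replacing $(a,b)$ with $(a+c,b+c)$ one reduces, at least morally, to controlling how $g'$ behaves under shifts, which is exactly where $g'$ being non-decreasing enters.

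\textbf{Key steps.} First I would reduce to $c:=b-a>0$ fixed and study $h(a):=g(a+c)-g(a)$ as a function of $a\in\mathbb{R}$. The two decisive regimes are: (i) $a\ge 0$ (so $b=a+c>0$ as well), and (ii) $a<0\le b$ (the symmetric tail $b\le 0$ follows by oddness of $g$, sending $(a,b)\mapsto(-b,-a)$). In regime (i), since $g'$ is non-decreasing, $g(a+c)-g(a)=\int_0^c g'(a+t)\,dt \ge \int_0^c g'(t)\,dt = g(c)$, so the inequality holds with $C=1$. In regime (ii), I would use $g(b)-g(a)=g(b)+g(-a)\ge g(b)$ together with $g(|a|), g(b)\le g(b+|a|)=g(b-a)=g(c)$; but here one needs a \emph{lower} bound, so instead one compares $g(b)+g(-a)$ with $g(b-a)$: by convexity of $G$ and the $\Delta_2$-type bound \eqref{cG0}, for $x,y\ge 0$ one has $g(x)+g(y)\ge c_0\,g(x+y)$ for a constant $c_0$ depending only on $p^\pm$. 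This is the standard "superadditivity up to a constant" for $g$ under \eqref{G1}/\eqref{cG0}: indeed $g(x+y)\le g(2\max\{x,y\})$ and $g(2t)\le 2^{p^+-1} g(t)$ (a consequence of \eqref{cG0} by integrating $g'(t)/g(t)\le (p^+-1)/t$), hence $g(x+y)\le 2^{p^+-1}\max\{g(x),g(y)\}\le 2^{p^+-1}(g(x)+g(y))$. Combining the two regimes gives the claim with $C=\min\{1,\,2^{-(p^+-1)}\}$.

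\textbf{Main obstacle.} The routine part is regime (i); the genuine difficulty is regime (ii), i.e. producing a \emph{uniform} lower bound for $g(b)-g(a)$ when $a$ and $b$ straddle $0$, because there the mean-value representation $\int_a^b g'(t)\,dt$ passes through $t=0$ where $g'$ may vanish (recall $g(0)=0$), so one cannot simply bound $g'$ below on $[a,b]$. The resolution is precisely the "$\Delta_2$ superadditivity" lemma for $g$ sketched above, which must be extracted from \eqref{cG0} (equivalently from \eqref{G1}); once that ingredient is in hand the proof is a short case analysis. A secondary point to handle carefully is the behavior of the constant: one should make sure $C$ depends only on $p^+$ (and not on $a,b$), which the argument above does.
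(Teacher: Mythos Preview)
Your argument is correct. The case split into (i) $a\ge 0$, (ii) $a<0\le b$, and the symmetric case $a\le b\le 0$ is exhaustive, and each step is sound: in (i) the monotonicity of $g'$ on $[0,\infty)$ gives $\int_0^c g'(a+t)\,dt\ge\int_0^c g'(t)\,dt=g(c)$; in (ii) the oddness of $g$ yields $g(b)-g(a)=g(b)+g(-a)$ and then the $\Delta_2$-type bound $g(2t)\le 2^{p^+-1}g(t)$ (which follows from integrating the upper inequality in \eqref{cG0}) gives the superadditivity $g(x)+g(y)\ge 2^{-(p^+-1)}g(x+y)$; the third case reduces to (i) by oddness. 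The resulting constant $C=2^{-(p^+-1)}$ depends only on $p^+$, as required.

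There is nothing to compare against in the paper itself: the lemma is quoted from \cite[Lemma~2.1]{MSV} and no proof is reproduced here. One minor remark: your parenthetical that \cite{MSV} ``establishes the analogue for the $p$-Laplacian'' is inaccurate---that reference already works in the fractional $g$-Laplacian setting, so the lemma is cited verbatim rather than adapted. This does not affect the validity of your argument.
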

\begin{corol}\label{negativo}
Observe that as $g$ is odd
$$
g(b)-g(a) \leq C g(b-a),\qquad\mbox{ for all }b \leq a.
$$
\end{corol}	
\begin{lema}\label{desigualdad}
 Assume \eqref{cG0} and that $g'$ is non-decreasing. Let $a, b, c$ and $d$ be real numbers so that $a-b=c-d$. Then, there exists a constant $C>0$ such that
$$
G(|a-b|)\leq C(g(c)-g(d))(a-b).
$$
\end{lema}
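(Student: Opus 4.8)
The plan is to reduce everything to the previous lemma (Lemma \ref{Lema2.1}) and its Remark \ref{negativo}, together with the growth bound \eqref{G1}. First I would split into the two cases according to the sign of $a-b$ (equivalently, of $c-d$). Suppose first $a-b = c-d \geq 0$, so that $c \geq d$. By Lemma \ref{Lema2.1} there is a constant $C_0 > 0$ with $g(c) - g(d) \geq C_0\, g(c-d)$. Since $a - b \geq 0$, multiplying by $a-b$ preserves the inequality, so $(g(c)-g(d))(a-b) \geq C_0\, g(c-d)(a-b) = C_0\, g(a-b)(a-b)$, where we used $c - d = a - b$. Now I invoke the left-hand inequality in \eqref{G1}, namely $G(t) \leq \frac{1}{p^-}\, t g(t)$ for $t > 0$ (trivially true at $t=0$), applied with $t = a - b = |a-b|$: this yields $G(|a-b|) \leq \frac{1}{p^-} (a-b) g(a-b) \leq \frac{1}{p^- C_0} (g(c)-g(d))(a-b)$, which is the claim with $C = 1/(p^- C_0)$.

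For the case $a - b = c - d < 0$, set $a' = b$, $b' = a$, $c' = d$, $d' = c$, so that $a' - b' = c' - d' > 0$ and $b' \geq a'$, i.e. we are back to the first case after renaming; alternatively, and more directly, apply Remark \ref{negativo}: since $c \leq d$, we have $g(c) - g(d) \leq C_0\, g(c-d)$, and as $c - d < 0$ both sides are negative. Multiplying by $a - b < 0$ reverses the inequality: $(g(c)-g(d))(a-b) \geq C_0\, g(c-d)(a-b) = C_0\, g(a-b)(a-b)$. But $g(a-b)(a-b) = g(-|a-b|)(-|a-b|) = |a-b|\, g(|a-b|)$ since $g$ is odd, so again $(g(c)-g(d))(a-b) \geq C_0\, |a-b|\, g(|a-b|) \geq C_0 p^- G(|a-b|)$ by \eqref{G1}, giving the result with the same constant $C = 1/(p^- C_0)$.

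I do not anticipate a serious obstacle here: the only subtlety is bookkeeping the signs so that multiplication by $a-b$ goes the right way in each case, and remembering that $g$ is odd so that $g(a-b)(a-b)$ always equals $|a-b|\,g(|a-b|) \geq 0$. The constant $C$ produced is exactly $C_0/p^-$ composed correctly — i.e. $C = 1/(p^- C_0)$ with $C_0$ the constant of Lemma \ref{Lema2.1} — and it is independent of $a,b,c,d$, as required. The case $a = b$ (hence $c = d$) is trivial since both sides vanish.
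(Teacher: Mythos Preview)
Your proof is correct and follows essentially the same approach as the paper: both split according to the sign of $a-b$, invoke \eqref{G1} to bound $G(|a-b|)$ by $\tfrac{1}{p^-}(a-b)g(a-b)$, and then use Lemma~\ref{Lema2.1} (resp.\ Remark~\ref{negativo}) together with $a-b=c-d$ to pass to $(g(c)-g(d))(a-b)$. The only difference is that you spell out the second case in full, whereas the paper dispatches it with ``follows analogously appealing to Remark~\ref{negativo}.''
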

\begin{proof}
First, we consider the case $a>b$, using \eqref{G1} and \eqref{2.1}
\begin{align*}
G(|a-b|)&=G(a-b)\leq \frac{1}{p^-} g(a-b)(a-b)=\frac{1}{p^-}g(c-d)(a-b)\\
&\leq C (g(c)-g(d))(a-b).
\end{align*}
The case $b>a$ follows analogously appealing to Remark \ref{negativo}.
\end{proof}

%	
%	If $X$ is an ordered Banach space, then $X_{+}$ will denote its non-negative order cone.

	\subsection{The fractional g-Laplacian operator.}
In this section we recall  some elementary properties of the fractional $g$-Laplacian.
	%\begin{defi}
%		Let  $s\in(0,1)$. The fractional g-Laplacian operator is defined as
%		\begin{equation*}
%			\glap u = p.v \int_{\Rn} g \left( \dfrac{u(x)-u(y)}{|x-y|^{s}} \right)\dfrac{d\mu}{|x-y|^{s}},
%		\end{equation*}
%		where p.v. stands for 'in principal value'. \end{defi}
		This operator is well defined between $\W(\Rn)$ and its dual space $W^{-s, \widetilde{G}}(\Rn)$. In fact, in \cite[Theorem 6.12]{BoS} the following representation formula is provided
		\begin{equation*}
			\langle \glap u, v \rangle = \iint _{\Rnn} g \left( \dfrac{u(x)-u(y)}{|x-y|^{s}} \right) \dfrac{v(x)-v(y)}{|x-y|^{s}} d\mu,
		\end{equation*}
		for any $v\in \W(\Rn)$.
	
	We give some properties of the operator $\glap $ in the next result. We  point out that the monotonicity of $\glap$ has been obtained in \cite[Lemma 3.4]{BBX} under a more restrictive assumption.
	\begin{lema}\label{Lemma2.1}
		$\glap \colon \Wc \rightarrow W^{-s, \widetilde{G}}(\Rn)$ is a monotone, continuous, and an $(S)_{+}$-operator. Moreover, it admits a continuous inverse.
	\end{lema}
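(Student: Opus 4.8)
The plan is to verify each property of the operator $\glap$ in turn, exploiting the elementary inequalities collected in the preliminaries and the structural hypothesis $g'$ non-decreasing (equivalently, strict monotonicity coming from Lemma \ref{Lema2.1}).

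\textbf{Monotonicity.} Write $\langle \glap u - \glap v, u-v\rangle$ using the representation formula as a single double integral over $\Rnn$ of the quantity
\[
\left( g\!\left(\tfrac{u(x)-u(y)}{|x-y|^{s}}\right) - g\!\left(\tfrac{v(x)-v(y)}{|x-y|^{s}}\right)\right)\left(\tfrac{u(x)-u(y)}{|x-y|^{s}} - \tfrac{v(x)-v(y)}{|x-y|^{s}}\right)\,d\mu .
\]
Since $g$ is non-decreasing each integrand is $\geq 0$, giving monotonicity; and since $g$ is strictly increasing on $(0,\infty)$ (as $g'$ is non-decreasing and $g>0$) the integrand vanishes only when $u(x)-u(y)=v(x)-v(y)$ for a.e.\ pair, which on $\Wc$ forces $u=v$. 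Hence $\glap$ is strictly monotone.

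\textbf{Continuity.} Take $u_k \to u$ in $\Wc$. I would estimate $\|\glap u_k - \glap u\|_{W^{-s,\Gm}}$ by testing against $v$ with $[v]_{s,G}\le 1$, applying the Hölder inequality in Orlicz spaces to bound it by $2\big\| g\big(\tfrac{u_k(x)-u_k(y)}{|x-y|^{s}}\big) - g\big(\tfrac{u(x)-u(y)}{|x-y|^{s}}\big)\big\|_{\Gm,d\mu}$. Passing to a subsequence so that the finite differences $\tfrac{u_k(x)-u_k(y)}{|x-y|^{s}}$ converge a.e.\ ($d\mu$) and are dominated, continuity of $g$ plus the $\Delta_2$ condition on $\Gm$ (so that modular convergence implies norm convergence, via Lemma \ref{norm and modular} applied to $\Gm$) and a Vitali/dominated convergence argument yields the claim; a standard subsequence argument removes the passage to a subsequence. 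Here Lemma \ref{Lema2.2}, $\Gm(g(t)) \le (p^{+}-1)G(t)$, provides the dominating function in $L^1(d\mu)$.

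\textbf{The $(S)_+$ property.} This is the main obstacle. Suppose $u_k \rightharpoonup u$ in $\Wc$ and $\limsup_k \langle \glap u_k, u_k - u\rangle \le 0$; I must show $u_k \to u$ strongly. Using monotonicity, $\langle \glap u_k - \glap u, u_k-u\rangle \ge 0$, and combined with the weak convergence (so $\langle \glap u, u_k-u\rangle \to 0$) one gets $\langle \glap u_k - \glap u, u_k - u\rangle \to 0$. The integrand being nonnegative, it converges to $0$ in $L^1(d\mu)$, hence (up to subsequence) a.e.; strict monotonicity of $g$ then forces $\tfrac{u_k(x)-u_k(y)}{|x-y|^{s}} \to \tfrac{u(x)-u(y)}{|x-y|^{s}}$ a.e.\ $d\mu$. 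To upgrade this to modular (hence norm, via the $\Delta_2$ condition on $G$ and Lemma \ref{norm and modular}) convergence of $[u_k - u]_{s,G}$, I would use a Brezis--Lieb / Fatou-type argument: a convexity inequality of the form $G\big(\tfrac{|a-b|}{2}\big) \le \tfrac12\big(g(a)a - g(a)b\big) + \tfrac12\big(g(b)b - g(b)a\big)$-type bound (derivable from \eqref{G1} and monotonicity of $g$) lets one dominate $\rho_{s,G}\big(\tfrac{u_k-u}{2}\big)$ by expressions whose $\limsup$ is controlled by $\langle \glap u_k - \glap u, u_k - u\rangle \to 0$, giving $\rho_{s,G}(u_k - u) \to 0$ and therefore $[u_k-u]_{s,G}\to 0$; the $L^G$ part of the norm is handled by the compact embedding of Theorem \ref{embedding}. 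Finally, the continuous inverse: $\glap$ is continuous, strictly monotone and coercive (coercivity follows from Lemma \ref{norm and modular}(2): $\langle \glap u, u\rangle = \rho_{s,G}(u) \ge \xi^{-}([u]_{s,G})$, which $\to \infty$), hence a bijection onto $W^{-s,\Gm}(\Rn)$ by the Minty--Browder theorem; being monotone, continuous and $(S)_+$ with continuous (indeed, demicontinuous plus $(S)_+$) structure, the inverse is continuous — concretely, if $f_k \to f$ and $u_k = (\glap)^{-1}f_k$, boundedness of $(u_k)$ from coercivity gives a weak limit $u$ along a subsequence, $\langle \glap u_k, u_k - u\rangle = \langle f_k, u_k-u\rangle \to 0$, so by $(S)_+$ $u_k \to u$ strongly and by continuity $\glap u = f$, i.e.\ $u = (\glap)^{-1}f$, and the usual subsequence argument finishes.
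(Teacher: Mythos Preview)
Your argument is essentially correct but considerably more circuitous than the paper's. The paper observes that Lemma~\ref{desigualdad} gives, pointwise, $G(|a-b|)\le C(g(a)-g(b))(a-b)$ --- this is precisely the ``convexity inequality'' you invoke at the end of your $(S)_+$ argument --- and integrating it immediately yields the \emph{uniform} monotonicity estimate
\[
\rho_{s,G}(u-v)\le C\,\langle \glap u-\glap v,\,u-v\rangle.
\]
From uniform monotonicity both $(S)_+$ and the existence of a continuous inverse follow by citing standard abstract results (Example~27.2 and Theorem~26.A in \cite{Z}, the latter combined with coercivity and hemicontinuity). Your route instead first extracts a.e.\ convergence of the difference quotients and then gestures at a Brezis--Lieb/Fatou step; but since the displayed inequality already bounds $\rho_{s,G}(u_k-u)$ directly by $\langle\glap u_k-\glap u,\,u_k-u\rangle\to 0$, the a.e.\ detour is redundant and no Brezis--Lieb machinery is needed. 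Two small corrections: (i)~$\langle\glap u,u\rangle$ is not equal to $\rho_{s,G}(u)$; rather, by \eqref{G1} one has $\langle\glap u,u\rangle\ge p^{-}\rho_{s,G}(u)$, which is what yields coercivity; (ii)~the key pointwise inequality does \emph{not} follow from \eqref{G1} and monotonicity of $g$ alone --- it genuinely requires the standing hypothesis that $g'$ is non-decreasing (this is how Lemma~\ref{Lema2.1} feeds into Lemma~\ref{desigualdad}), and you should make that dependence explicit.
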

	\begin{proof}The continuity of $\glap$ follows from H\"{o}lder inequality. Observe that to prove that it is monotone and an $(S)_{+}$-operator, it is enough by \cite[Example 27.2]{Z} to state that $\glap$ is uniformly monotone.
	Observe that for $u, v \in \Wc$, we obtained thanks to Lemma \ref{desigualdad} that
	\begin{equation}
	\begin{array}{rl}
	\rho_{s, G}(u-v)  &= \di\iint_{\Rnn}G\left(\dfrac{(u-v)(x)-(u-v)(y)}{|x-y|^{s}} \right)\,d\mu
	\\ &= \di\iint_{\Rnn}G\left(\bigg|\dfrac{(u-v)(x)-(u-v)(y)}{|x-y|^{s}} \bigg|\right)\,d\mu \\   %\quad  \leq C \iint_{\Rnn}\left[ g\left( \dfrac{u(x)-u(y)}{|x-y|^{s}}\right)-g\left( \dfrac{v(x)-v(y)}{|x-y|^{s}}\right)\right]\left( \dfrac{u(x)-u(y)}{|x-y|^{s}}-\dfrac{v(x)-v(y)}{|x-y|^{s}}\right) \,d\mu \\ &
	&\leq C\left\langle \glap u- \glap v, u-v \right\rangle.
	\end{array} \end{equation}Thus, by Lemma \ref{norm and modular}, it follows that
	$$\left\langle \glap u-\glap v, u-v \right\rangle \geq C\min\left\lbrace \|u-v\|^{p^{+}-1}_{s, G}, \|u-v\|^{p^{-}-1}_{s, G}  \right\rbrace \|u-v\|_{s, G}.$$We conclude that $\glap$ is uniformly monotone. To prove that it admits a continuous inverse is enough to show that is coercive, hemicontinuous and uniformily monotone (see for instance Theorem 26.A in \cite{Z}). In fact,
$$
\frac{\left\langle \glap u,u\right\rangle}{[u]_{s,G}}\geq \frac{p^-\rho_{s,G}(u)}{[u]_{s,G}}\geq p^-\min \left\lbrace [u]_{s,G}^{p^--1},[u]_{s,G}^{p^+-1}\right\rbrace
$$so, taking limit as $[u]_{s,G}$ goes to infinity, we obtain that the fractional $g$-Laplacian is coercive.
Since the real function $t\to \left\langle \glap (u+tv),w\right\rangle$ is continuous in $[0,1]$ for any $u,v,w\in W^{s,G}(\Omega)$ we obtain that $\glap$ is hemicontinuous. This ends the proof.

	\end{proof}

%\textcolor{red}{Agregue la subseccion de abajo y pase la nocion de solucion que estaba en la seccion siguiente}
\subsection{Notion of solution and regularity}
Let $X$ be an ordered Banach space, its nonnegative cone is denoted by $X_+$.
\begin{defi}
		Let $u \in W^{s, G}(\mathbb{R}^n)$. Then
		\begin{enumerate}[label=(\roman*)]
			\item $u$ is a supersolution of \eqref{1.1} if  
			\begin{equation*}
				\begin{array}{ccll}
					\langle \glap u, v \rangle \geq \di\int_{\O} f(x,u)v\, dx, \mbox{ for all } v \in E_{+},\\
				\end{array}
			\end{equation*}
			and $u \geq 0$ in $\O^{c}$.
			\item $u$ is a subsolution of \eqref{1.1} if 
			\begin{equation*}
				\begin{array}{ccll}
					\langle \glap u, v \rangle \leq \di\int_{\O} f(x,u)v\, dx, \mbox{ for all } v \in E_{+},\\
				\end{array}
			\end{equation*}
			and $u \leq 0$ in $\O^{c}$.
		\end{enumerate}
		Moreover , if $\uab$ is a subsolution, $\uarr$ is a supersolution, and $\uab \leq \uarr$ in $\O$, then the pair $(\uab, \uarr) \in \W (\Rn) \times \W (\Rn)$ is called a sub-supersolution pair of \eqref{1.1}.
	\end{defi}
	
	\begin{defi}\label{defi solution}We say that $u \in \Wc$ is a weak solution of \eqref{1.1} if 
	\begin{equation*}
				\begin{array}{ccll}
					\langle \glap u, v \rangle = \di\int_{\O} f(x,u)v\, dx, \mbox{ for all } v \in E.\\
				\end{array}
			\end{equation*}
	\end{defi}
\normalcolor
Finally, we  mention some results concerning the regularity of solutions to the $g$-Laplace equation. We refer the reader to \cite[Proposition 4.7]{BSV}, \cite[Theorem 1.1]{BSV} and \cite[Theorem 3]{BSV2}. \\
\begin{teo}\label{L2.5}Let $s \in (0, 1)$ and  let $G$ be an N-function satisfying  \eqref{G1} with $sp^{+} < n$. Moreover, assume that
\begin{equation}\label{cG}
\int_\Omega G(u(x))\,dx >0,
\end{equation}where $u \in W^{s, G}_0(\Omega)$ is a weak solution of the problem
%\begin{equation*}
%\begin{cases}
%(-\Delta_g)^{s}u = b(u)\quad \text{ in }\, \Omega\textcolor{orange}{,} \\ u=0 \qquad \text{ on }\, \Omega^{c},
%\end{cases}
%\end{equation*}
\begin{equation*}
	\left\{
	\begin{array}{ccll}
		(-\Delta_g)^{s}u = b(u)\quad &~~& \text{in}~~\O,\\
		\,\,u = 0&~~&\text{in}~~\O^{c},
	\end{array}
	\right.
\end{equation*}where $b=B'$, with $B$ an N-function satisfying
$$\eta^{-} \leq  \dfrac{tb(t)}{B(t)}\leq \eta^{+},$$and $B \ll G_{\frac{n}{s}}$. Then, there exists a constant $C=C(s,n,  p^{\pm}, \eta^{\pm})>0$ such that
$$\|u\|_{L^{\infty}(\Omega)}\leq C.$$If $G$ additionally satisfies \eqref{cG0}, with $p^{-}>2$  and $g$ is a convex function in $(0, \infty)$, then there exists $\alpha \in (0, 1)$ such that
$$\|u\|_{C^{\alpha}(\overline{\Omega})} \leq C.$$
\end{teo}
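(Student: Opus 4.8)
The statement collects the known regularity theory for the fractional $g$-Laplacian, so the plan is to reduce both estimates to the cited results \cite{BSV,BSV2}; the only real task is to verify that the right-hand side $h:=b(u)$ is an admissible source for those theorems. The starting observation is that the hypothesis $\eta^{-}\le tb(t)/B(t)\le\eta^{+}$ combined with $B\ll G_{\frac{n}{s}}$ says precisely that $b$ is \emph{subcritical} with respect to the Orlicz--Sobolev embedding of Theorem \ref{embedding}: one has $W^{s,G}_{0}(\O)\hookrightarrow L^{B}(\O)$ continuously, hence $\rho_{B,\O}(u)<\infty$, and, since $b=B'$ with $B$ of the indicated growth, $|b(u)|\le\eta^{+}B(|u|)/|u|$ almost everywhere, which is exactly the subcritical integrability of the source needed to run the iteration scheme.

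For the $L^{\infty}$ bound I would invoke \cite[Proposition 4.7]{BSV} together with \cite[Theorem 1.1]{BSV}: under \eqref{G1}, $sp^{+}<n$ and \eqref{cG}, the De Giorgi/Moser scheme developed there applies to the equation $\glap u=h$ with $h$ subcritical and yields a bound $\|u\|_{\L}\le C$ in which, by inspection of that argument, $C$ depends only on $s,n,p^{\pm}$ and $\eta^{\pm}$. The point deserving attention is exactly this last assertion: one must check that none of the constants arising along the iteration in \cite{BSV} carries a hidden dependence on $u$ itself, the role of condition \eqref{cG} being to keep the scheme nondegenerate and to allow the estimate to be closed in terms of structural data alone.

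Once $u\in\L$, the source becomes bounded, $\|h\|_{\L}=\|b(u)\|_{\L}\le\sup_{|t|\le\|u\|_{\L}}|b(t)|=:C'$, again controlled by the previous step and the structural constants. Then \cite[Theorem 3]{BSV2} (see also \cite[Theorem 1.1]{BSV}), which provides interior and up-to-the-boundary Hölder regularity for $\glap u=h$ with bounded $h$ under the additional hypotheses \eqref{cG0} with $p^{-}>2$ and $g$ convex on $(0,\infty)$, produces some $\alpha\in(0,1)$ and the estimate $\|u\|_{C^{\alpha}(\Oc)}\le C$; since $u=0$ in $\O^{c}$ and $\O$ is smooth, the global (boundary) form of that estimate applies directly. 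Thus the proof introduces no genuinely new argument — the only mildly delicate part is the bookkeeping that keeps every constant depending solely on $s,n,p^{\pm},\eta^{\pm}$ (and, for the Hölder bound, on the convexity constant of $g$).
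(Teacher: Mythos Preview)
Your proposal is correct and matches the paper's treatment: the paper does not give a proof of this theorem either, but simply cites \cite[Proposition 4.7]{BSV}, \cite[Theorem 1.1]{BSV} and \cite[Theorem 3]{BSV2} as the source of both the $L^{\infty}$ and the $C^{\alpha}$ estimates. Your additional remarks verifying that $b(u)$ is an admissible subcritical source and tracking the constant dependence are a reasonable expansion of what the paper leaves implicit.
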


%	\begin{prop}\label{L2.4}\cite[Proposition 4.7]{BSV}
%		Let $u \in \Wc$ be a weak solution of
%		\begin{equation*}
%			|\glap u| \leq K\text{    in }\O
%		\end{equation*}
%		for some $K > 0$.
%		Then
%		\begin{equation*}
%			||u||_{\L} \leq C
%		\end{equation*}
%		where $C$ is a positive constant depending only $s, n, \lambda, \Lambda, K$ and $diam(\O$).
%	\end{prop}
%	\begin{teo}\label{L2.5}\cite[Theorem 1.1]{BSV}
%		There exist $\alpha \in \left( 0,s\right]$ depending only $n, s, \O, \Lambda$ and $\lambda$ such that for any weak solution $u\in \Wc$ of
%\begin{equation}
%\glap u = g(x) \,\, \text{in }\O,
%\end{equation}for $g \in L^{\infty}(\Omega)$, it holds $u \in C^{\alpha}(\Oc)$ and
%		\begin{equation*}
%			||u||_{C^{\alpha}(\Oc)} \leq C
%		\end{equation*}
%		where $C$ is a constant depending on $s, n, \lambda, \Lambda, ||g||_{\L}$ and $\O$.
%	\end{teo}

%	\begin{teo}\label{T2.99}[Main Theorem on Pseudomonotone Operators]
%		Let $X$ be a real, reflexive Banach space, and let $A \colon X \rightarrow \bar{X}$ be a pseudomonotone, bounded, and coercive operator, and $b \in \bar{X}$. Then a solution of the equation $Au = b$ exists.
%	\end{teo}

	\section{Solutions in a sub-supersolution interval}
In this section we extend the results obtained in \cite{Base} to the Orlicz fractional setting. More precisely, we focus in proving  the existence of solutions for \eqref{1.1}. We assume that the function $f$ satisfies the following hypothesis:
	
	\
	
	$(\textbf{H}_{0})$~~~~~ $f \colon \O \times \R \rightarrow \R$ is a Carathéodory function such that for a.e $x \in \O$ and all $t \in \R$
	\begin{equation*}
		|f(x,t)| \leq c_{0} (1+|b(t)|),  ~~~~~~
	\end{equation*}
	where  $c_{0}>0$ and $b(t) = B'(t)$, with $B$ an N-function satisfying \eqref{G1} and $G \ll B \ll G_{\frac{n}{s}}$.
	
\
	
We point out  that this kind of hypothesis has been used before in the literature, see for example \cite{BaS}.

%	\begin{defi}
%		Let $\O$ be a domain in $\Rn$, $u \in \widetilde{W}^{s,G}(\O)$ and $f \in W^{-s, \widetilde{G}}(\O)$. We say that $u$ is a weak subsolution of $(-\bigtriangleup_{g})^{s} u = f$ in $\O$ if for any test function $\Fi \in \Wc$ satisfying $\Fi \geq 0$ a.e. in $\O$ there holds
%		\begin{equation}
%			\int \int_{\Rnn} g \left( \dfrac{u(x)-u(y)}{|x-y|^{s}} \right) \dfrac{u(x)-u(y)}{|u(x)-u(y)|} \dfrac{\Fi(x)-\Fi(y)}{|x-y|^{n+s}} dxdy \leq \int_{\O} f(x,u)~\Fi dx
%		\end{equation}
%		We say that $u$ is a supersolution if $-u$ is a subsolution and that $u$ is a solution if it is both a sub and a supersolution. In particular, if $u$ is a solution
%		\begin{equation}
%			\int \int_{\Rnn} g \left( \dfrac{u(x)-u(y)}{|x-y|^{s}} \right) \dfrac{u(x)-u(y)}{|u(x)-u(y)|} \dfrac{\Fi(x)-\Fi(y)}{|x-y|^{n+s}} dxdy = \int_{\O} f(x,u)~\Fi dx
%		\end{equation}
%		any test function $\Fi \in \Wc$.
%	\end{defi}
	
Now, we are in position to prove our first lemma.
	\begin{lema}\label{L3.1}
		Suppose that $f$ satisfies  $(\textbf{H}_{0})$. Let $u_{1}, u_{2} \in$ $W^{s,G}(\mathbb{R}^n)$:
		\begin{enumerate}[label=(\roman*)]
			\item\label{i} if $u_{1}, u_{2}$ are supersolutions of \eqref{1.1}, then  $u^*=\min \left\lbrace u_{1}, u_{2}\right\rbrace $ is also a supersolution;
			\item\label{ii} if $u_{1}, u_{2}$ are subsolutions of \eqref{1.1}, then  $u_*=\max \left\lbrace u_{1}, u_{2}\right\rbrace $ is a subsolution as well.
		\end{enumerate}
	\end{lema}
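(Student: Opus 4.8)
The plan is to prove item \ref{i} and then obtain item \ref{ii} by a symmetry/sign argument. For \ref{i}, write $u^* = \min\{u_1,u_2\}$ and set $w = u_1 - u^* = (u_1 - u_2)_+ \geq 0$, so that $u^* = u_1 - w$ and also $u^* = u_2 - (u_2-u_1)_+$. The strategy is the classical one: decompose an arbitrary nonnegative test function $v \in E_+$ according to the region where $u_1 \leq u_2$ versus $u_1 > u_2$, test the supersolution inequality for $u_1$ against a piece of $v$ supported (essentially) on $\{u_1 \le u_2\}$ and the supersolution inequality for $u_2$ against the complementary piece, and then add. The delicate point compared with the local case is that $\glap$ is nonlocal, so $\langle \glap u^*, v\rangle$ does not split as a sum of integrals over $\{u_1\le u_2\}$ and $\{u_1>u_2\}$; instead one must control the "cross" interactions over $\{u_1 \le u_2\}\times\{u_1>u_2\}$ in $\Rnn$.

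The key algebraic input will be the monotonicity of $g$ together with the inequality, valid pointwise in $(x,y)$, that compares the increments of $u^*$ with those of $u_1$ on the "good" set and of $u_2$ on the "bad" set. Concretely, first I would take $v \in E_+$ and use density to reduce to $v \in C_c^\infty(\Omega)$, $v\ge0$. A convenient choice of test functions is $v_1 := v\,\chi$-type cutoffs is not available pointwise-nicely in the nonlocal setting, so instead I follow the now-standard nonlocal trick: it suffices to show
\begin{equation*}
\langle \glap u^*, v\rangle \;\ge\; \langle \glap u_1, v_1\rangle + \langle \glap u_2, v_2\rangle
\end{equation*}
is \emph{not} quite what one wants; rather, one shows directly that for every $v\in E_+$,
\begin{equation*}
\iint_{\Rnn} \left[ g\!\left(\tfrac{u^*(x)-u^*(y)}{|x-y|^s}\right) - g\!\left(\tfrac{u_1(x)-u_1(y)}{|x-y|^s}\right)\chi_{\{u_1\le u_2\}} - g\!\left(\tfrac{u_2(x)-u_2(y)}{|x-y|^s}\right)\chi_{\{u_1> u_2\}} \right]\frac{v(x)-v(y)}{|x-y|^s}\,d\mu \;\ge\; 0,
\end{equation*}
where the characteristic functions are evaluated symmetrically (say at the pair $(x,y)$ in a fixed way on each of the four quadrants $\{u_1\le u_2\}^2$, $\{u_1>u_2\}^2$, and the two mixed ones). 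On the two "pure" quadrants the bracket vanishes identically, since there $u^*$ agrees with $u_1$ (resp. $u_2$) up to an additive constant that cancels in the increment. On the mixed quadrants one uses monotonicity of $g$ and the fact that $v\ge0$: the point $x$ where, say, $u_1(x)\le u_2(x)$ and $u_1(y)>u_2(y)$ contributes a term with a definite sign because $u^*(x)-u^*(y) = u_1(x)-u_2(y)$ lies between $u_1(x)-u_1(y)$ and $u_2(x)-u_2(y)$. Adding the split supersolution inequalities $\langle\glap u_1,v\chi_{\{u_1\le u_2\}}\rangle + \langle\glap u_2,v\chi_{\{u_1>u_2\}}\rangle \ge \int_\Omega f(x,u_1)v\chi_{\{u_1\le u_2\}} + f(x,u_2)v\chi_{\{u_1>u_2\}}$, and noting $f(x,u^*) = f(x,u_1)\chi_{\{u_1\le u_2\}} + f(x,u_2)\chi_{\{u_1>u_2\}}$ a.e., yields $\langle\glap u^*,v\rangle \ge \int_\Omega f(x,u^*)v\,dx$. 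Finally $u^*\ge0$ on $\Omega^c$ because $u_1,u_2\ge0$ there, and $u^*\in W^{s,G}(\Rn)$ since $\min\{u_1,u_2\} = \tfrac12(u_1+u_2) - \tfrac12|u_1-u_2|$ and the space is closed under the lattice operations (the modular of the truncation is controlled by that of $u_1,u_2$ using \eqref{A2}-type estimates).

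For \ref{ii}, I would argue that $u_* = \max\{u_1,u_2\} = -\min\{-u_1,-u_2\}$ and that $-u_i$ is a supersolution of the problem with $f$ replaced by $-f(x,-\cdot)$ (using that $g$ is odd, so $\glap(-u) = -\glap u$), hence apply \ref{i}; alternatively, repeat the splitting argument verbatim with the inequalities reversed. The main obstacle I anticipate is making the pointwise sign analysis on the mixed quadrants fully rigorous — i.e., verifying that the measurable selection of which inequality to test where is consistent, that the resulting integrands are genuinely integrable (this is where $(\textbf{H}_0)$ and the embedding $W^{s,G}_0(\Omega)\hookrightarrow L^B(\Omega)$ from Theorem \ref{embedding} enter, to make $\int f(x,u_i)v$ finite), and that the "cross" terms in $\langle\glap u^*,v\rangle$ can indeed be absorbed with the correct sign using only monotonicity of $g$ and $v\ge0$. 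The lattice-stability of $W^{s,G}(\Rn)$ and the identification $f(x,u^*) = f(x,u_1)\chi_{\{u_1\le u_2\}} + f(x,u_2)\chi_{\{u_1>u_2\}}$ are routine once the measurable sets are fixed.
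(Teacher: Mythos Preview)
Your overall strategy---split the test function between $\{u_1\le u_2\}$ and $\{u_1>u_2\}$, use each supersolution on its own piece, and control the nonlocal cross terms via monotonicity of $g$---is exactly the right idea, and it is the one the paper follows. But there are two genuine gaps in your execution that the paper's proof has to work to close.

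First, you cannot test the supersolution inequalities with $v\chi_{\{u_1\le u_2\}}$ and $v\chi_{\{u_1>u_2\}}$. These functions need not lie in $E_+$ (or even in $W^{s,G}(\Rn)$): multiplying a smooth $v$ by a raw characteristic function creates a jump along $\{u_1=u_2\}$. The paper fixes this by introducing a Lipschitz truncation $\tau_\varepsilon$ of $\chi_{(0,\infty)}$ and testing with $\tau_\varepsilon(u_2-u_1)\varphi$ and $(1-\tau_\varepsilon(u_2-u_1))\varphi$, which are admissible; one then passes to the limit $\varepsilon\to 0$ at the end, using $(\textbf{H}_0)$ and dominated convergence on the right-hand side.

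Second, and more seriously, your displayed inequality on the mixed quadrants is not correct as stated. The integrand you write is of the form (bracket)$\cdot\frac{v(x)-v(y)}{|x-y|^s}$, and while the bracket has a sign on each mixed quadrant, $v(x)-v(y)$ does not---so the product has no sign. Moreover, that bracket is \emph{not} the discrepancy between $\langle\glap u^*,v\rangle$ and $\langle\glap u_1,v\chi\rangle+\langle\glap u_2,v(1-\chi)\rangle$, because $\frac{(v\chi)(x)-(v\chi)(y)}{|x-y|^s}\ne \chi(x)\frac{v(x)-v(y)}{|x-y|^s}$. The paper's way around this is essential: after expanding $\langle\glap u_1,\tau_\varepsilon\varphi\rangle+\langle\glap u_2,(1-\tau_\varepsilon)\varphi\rangle$ over the four quadrants, one does \emph{not} regroup into factors of $\varphi(x)-\varphi(y)$. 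Instead one keeps the terms carrying $\varphi(x)$ and $\varphi(y)$ separate and uses $\varphi\ge 0$ (not $\varphi(x)-\varphi(y)\ge 0$) together with $g(D_s u_1)\le g(D_s u^*)\le g(D_s u_2)$ on $A_1\times A_2$ to bound each piece individually. Only after these one-sided bounds does the expression recombine into $\iint g(D_s u^*)\frac{\varphi(x)-\varphi(y)}{|x-y|^s}\,d\mu$. Your sketch collapses this step and in doing so loses the mechanism that actually produces the inequality.

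Your treatment of the right-hand side, of $u^*\ge 0$ on $\Omega^c$, of lattice stability of $W^{s,G}$, and of part \ref{ii} by the odd symmetry of $g$ is fine.
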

	\begin{proof}
		We  prove \ref{i}, the proof of \ref{ii} is analogous. We have for $i=1, 2$ and  for all $v \in E_{+}$ that
		\begin{equation}\label{3.1}
			\left\{
			\begin{array}{ccll}
			\di\iint_{\Rnn} g \left( \dfrac{u_{i}(x)-u_{i}(y)}{|x-y|^{s}} \right)\dfrac{v(x)-v(y)}{|x-y|^{s}} d \mu \geq \di\int_{\O} f(x,u_{i})v dx,\\
				u_{i} \geq 0 \mbox{ in }\O^{c}.
			\end{array}
		\right.
		\end{equation}
		Set $u^*=\min\left\lbrace u_{1}, u_{2}\right\rbrace \in W^{s,G}(\mathbb{R}^n)$, then $u^{*}\geq 0$ in $\O^{c}$.
		Moreover, we consider the following sets
			$$A_{1}= \left\lbrace x \in \Rn \colon u_{1} (x) < u_{2} (x) \right\rbrace, \quad A_{2}= A_{1}^{c}.$$
	Fix $\varepsilon > 0$, and define for all $t \in \R$ the truncation function
		\begin{equation*}
			\t (t) = \left\{
			\begin{array}{ccll}
				0 &~~&if~~t\leq 0,\\
				\dfrac{t}{\varepsilon}&~~&if~~0<t<\varepsilon,\\
				1&~~&if~~t \geq \varepsilon.
			\end{array}
			\right.
		\end{equation*}
	The mapping $\t \colon \R \rightarrow \R$ is Lipschitz continuous, non-decreasing,  $0 \leq \t(t) \leq 1$ for all $t \in \R$, and
	\begin{equation}\label{conv tau}
	\t (u_{2} - u_{1}) \rightarrow \mathcal{X} _{A_{1}},\qquad 1 -\t (u_{2} - u_{1}) \rightarrow \mathcal{X} _{A_{2}}
	\end{equation}a.e in $\Rn$  as $\varepsilon \rightarrow 0^{+}$. For simplicity, we denote
	$$\t:=\t (u_{2} - u_{1})$$and$$1-\t:=1 -\t (u_{2} - u_{1}).$$
 Let now $\Fi \in C^{\infty}_{c} (\O)_{+}$. Using $\t  \Fi$ and  $1 -\t \Fi$  as test functions	in \eqref{3.1}, we obtain
		\begin{equation} \label{3.2}
			\begin{split}
				I &:=   \di\iint_{\Rnn} g \left( \dfrac{u_{1}(x)-u_{1}(y)}{|x-y|^{s}} \right)  \dfrac{\t(x)\varphi(x)-\t(y)\varphi(y)}{|x-y|^{s}} d \mu \\
				& \quad + \di\iint_{\Rnn} g \left( \dfrac{u_{2}(x)-u_{2}(y)}{|x-y|^{s}} \right)  \dfrac{ (1-\t)(x)\varphi(x)-(1-\t)(y)\varphi(y)}{|x-y|^{s}} d \mu \\
				&\geq \di\int_{\O} f(x,u_{1})\t\Fi dx + \int_{\O} f(x,u_{2})(1-\t)\Fi dx := II.
			\end{split}
		\end{equation}
	We get, for $A_{1}$ and $A_{2}$,
		\begin{equation*}
		\begin{array}{llr}
				&I=\di\iint_{A_{1}\times A_{1}} g \left( \dfrac{u_{1}(x)-u_{1}(y)}{|x-y|^{s}} \right)  \dfrac{\Fi(x)-\Fi(y)}{|x-y|^{s}}[\t(x)] d \mu &(A)\\
				&+ \di\iint_{A_{1}\times A_{1}} g \left( \dfrac{u_{1}(x)-u_{1}(y)}{|x-y|^{s}} \right)\dfrac{\Fi(y)}{|x-y|^{s}}[\t(x) - \t(y)] d \mu &(B)\\
				&+ \di\iint_{A_{1} \times A_{2}} g \left( \dfrac{u_{1}(x)-u_{1}(y)}{|x-y|^{s}} \right) \dfrac{\Fi(x)}{|x-y|^{s}}[\t(x) ] d \mu &(C)\\
				&- \di\iint_{A_{2} \times A_{1}} g \left( \dfrac{u_{1}(x)-u_{1}(y)}{|x-y|^{s}} \right) \dfrac{\Fi(y)}{|x-y|^{s}}[\t(y) ] d \mu &(D) \\
				&+ \di\iint_{A_{1} \times A_{1}} g \left( \dfrac{u_{2}(x)-u_{2}(y)}{|x-y|^{s}} \right) \dfrac{\Fi(x)-\Fi(y)}{|x-y|^{s}}[1-\t(x)] d \mu &(E)\\
				&- \di\iint_{A_{1} \times A_{1}} g \left( \dfrac{u_{2}(x)-u_{2}(y)}{|x-y|^{s}} \right) \dfrac{\Fi(y)}{|x-y|^{s}}[\t(x) - \t(y)] d \mu &(B)\\
				&+ \di\iint_{A_{1} \times A_{2}} g \left( \dfrac{u_{2}(x)-u_{2}(y)}{|x-y|^{s}} \right) \dfrac{\Fi(x)-\Fi(y)}{|x-y|^{s}}[1-\t(x)] d \mu &(F)\\
				&- \di\iint_{A_{1} \times A_{2}} g \left( \dfrac{u_{2}(x)-u_{2}(y)}{|x-y|^{s}} \right) \dfrac{\Fi(y)}{|x-y|^{s}}[\t(x) ] d \mu &(C)\\
				&+ \di\iint_{A_{2} \times A_{1}} g \left( \dfrac{u_{2}(x)-u_{2}(y)}{|x-y|^{s}} \right) \dfrac{\Fi(x)}{|x-y|^{s}}[\t(y) ] d \mu &(D) \\
				&+ \di\iint_{A_{2} \times A_{1}} g \left( \dfrac{u_{2}(x)-u_{2}(y)}{|x-y|^{s}} \right) \dfrac{\Fi(x)-\Fi(y)}{|x-y|^{s}}[1-\t(y)] d \mu &(G)\\
				&+ \di\iint_{A_{2} \times A_{2}} g \left( \dfrac{u_{2}(x)-u_{2}(y)}{|x-y|^{s}} \right) \dfrac{\Fi(x)-\Fi(y)}{|x-y|^{s}} d \mu &(H).
		\end{array}
		\end{equation*}
	Observe first that  $(E), (F), (G) \rightarrow 0$ as $\varepsilon \rightarrow 0^{+}$ by \eqref{conv tau}. Also, getting together the integrals with equal letters, we have
			\begin{align*}
				&I = \di\iint_{A_{1} \times A_{1}} g \left( \dfrac{u_{1}(x)-u_{1}(y)}{|x-y|^{s}} \right)  \dfrac{\Fi(x)-\Fi(y)}{|x-y|^{s}}[\t(x)] d \mu\\
				&+  \di\iint_{A_{1} \times A_{1}} \left[ g \left( \dfrac{u_{1}(x)-u_{1}(y)}{|x-y|^{s}} \right) - g \left( \dfrac{u_{2}(x)-u_{2}(y)}{|x-y|^{s}} \right)  \right]
				 \dfrac{\Fi(y)}{|x-y|^{s}}[\t(x) - \t(y)] d \mu\\
				&+ \di\iint_{A_{1} \times A_{2}} \left[ g \left( \dfrac{u_{1}(x)-u_{1}(y)}{|x-y|^{s}} \right)  \dfrac{\Fi(x)}{|x-y|^{s}} - g \left( \dfrac{u_{2}(x)-u_{2}(y)}{|x-y|^{s}} \right) \dfrac{\Fi(y)}{|x-y|^{s}}\right]  \t(x)  d \mu\\
				&+ \di\iint_{A_{2} \times A_{1}} \left[g \left( \dfrac{u_{2}(x)-u_{2}(y)}{|x-y|^{s}}\right) \dfrac{\Fi(x)}{|x-y|^{s}} - g \left( \dfrac{u_{1}(x)-u_{1}(y)}{|x-y|^{s}} \right)  \dfrac{\Fi(y)}{|x-y|^{s}}\right] \t(y)  d \mu\\
				&+ \di\iint_{A_{2} \times A_{2}} g \left( \dfrac{u_{2}(x)-u_{2}(y)}{|x-y|^{s}} \right) \dfrac{\Fi(x)-\Fi(y)}{|x-y|^{s}} d \mu\\ & + \textbf{o}(1)\\
				&=(A)+(B)+(C)+(D)+(H).
			\end{align*}
		
We have that  $(B)$ is negative, since $g$ is non-decreasing and  since for all $x, y \in A_{1}$
		\begin{equation*}
			u_{1}(x)-u_{1}(y) \geq u_{2}(x)-u_{2}(y)
		\end{equation*}if and only if
\begin{equation*} 
\t(y)=\t(u_{2} - u_{1}) (y) \geq \t(u_{2} - u_{1}) (x)=\t(x).
\end{equation*}
	%	\begin{equation*}
%			u_{1}(x)-u_{1}(y) \geq u_{2}(x)-u_{2}(y) \Leftrightarrow
%			g \left( \frac{u_{1}(x)-u_{1}(y)}{|x-y|^{s}} \right) \geq
%			g \left( \frac{u_{2}(x)-u_{2}(y)}{|x-y|^{s}} \right)
%			\Leftrightarrow \t(u_{2} - u_{1}) (y) \geq \t(u_{2} - u_{1}) (x),
%		\end{equation*}
%	hence
	
	On the other hand, for all $x \in A_{1}$, $y \in A_{2}$, there holds
		\begin{equation*}
			u_{1}(x)-u_{1}(y) \leq u_{1}(x)-u_{2}(y) \leq u_{2}(x)-u_{2}(y),
		\end{equation*}
	and thus, using again  that $g$ is non-decreasing, we get
		\begin{equation*}
			 g \left( \frac{u_{1}(x)-u_{1}(y)}{|x-y|^{s}} \right) \leq
			 g \left( \frac{u_{1}(x)-u_{2}(y)}{|x-y|^{s}} \right) \leq
			 g \left( \frac{u_{2}(x)-u_{2}(y)}{|x-y|^{s}} \right).
		\end{equation*}
	Then,  $(C)$ is bounded from above by
$$
	\di\iint_{A_{1} \times A_{2}} \left[ g \left( \dfrac{u_{1}(x)-u_{2}(y)}{|x-y|^{s}} \right)  \dfrac{\Fi(x)-\Fi(y)}{|x-y|^{s}}\right] \tau_\varepsilon(x) d \mu.
$$
	%
%	On the other hand, for all $x \in A_{2}$, $y \in A_{1}$
%	\begin{equation*}
%		u_{2}(x)-u_{2}(y) \leq u_{2}(x)-u_{1}(y) \leq u_{1}(x)-u_{1}(y),
%	\end{equation*}
%	using that $g$ is increasing,
%	\begin{equation*}
%		g \left( \frac{u_{2}(x)-u_{2}(y)}{|x-y|^{s}} \right) \leq
%		g \left( \frac{u_{2}(x)-u_{1}(y)}{|x-y|^{s}} \right) \leq
%		g \left( \frac{u_{1}(x)-u_{1}(y)}{|x-y|^{s}} \right),
%	\end{equation*}
	Analogously, for all $x \in A_{2}$, $y \in A_{1}$  we can bound  $(D)$ by
	
	$$
		\di\iint_{A_{2} \times A_{1}} \left[ g \left( \dfrac{u_{2}(x)-u_{1}(y)}{|x-y|^{s}} \right)   \dfrac{\Fi(x)-\Fi(y)}{|x-y|^{s}}\right]\tau_\varepsilon(y)  d \mu.
		$$
	
	So we have
	\begin{equation*}
		\begin{array}{llr}
			I &\leq \di\iint_{A_{1} \times A_{1}} g \left( \dfrac{u_{1}(x)-u_{1}(y)}{|x-y|^{s}} \right)  \dfrac{\Fi(x)-\Fi(y)}{|x-y|^{s}} \tau_\varepsilon(x) d \mu\\
			&+\di\iint_{A_{1} \times A_{2}} g \left( \dfrac{u_{1}(x)-u_{2}(y)}{|x-y|^{s}} \right)  \dfrac{\Fi(x)-\Fi(y)}{|x-y|^{s}} \tau_\varepsilon(x) d \mu\\
			&+\di\iint_{A_{2} \times A_{1}}  g \left( \dfrac{u_{2}(x)-u_{1}(y)}{|x-y|^{s}} \right)  \dfrac{\Fi(x)-\Fi(y)}{|x-y|^{s}} \tau_\varepsilon(y) d \mu \\
			&+ \di\iint_{A_{2} \times A_{2}} g \left( \dfrac{u_{2}(x)-u_{2}(y)}{|x-y|^{s}} \right)  \dfrac{\Fi(x)-\Fi(y)}{|x-y|^{s}} d \mu\,+ \textbf{o}(1)\\
			& \to   \di\iint_{\Rnn} g \left( \dfrac{u^*(x)-u^*(y)}{|x-y|^{s}} \right)  \dfrac{\Fi(x)-\Fi(y)}{|x-y|^{s}} d \mu,\quad \text{as $\varepsilon \to 0$}.
		\end{array}
	\end{equation*}
	
	Now in order to estimate II, we use the Dominated Convergence Theorem. Observe that the mayorant is obtained thanks to $(\textbf{H}_{0})$ and the definition of $\t$. In fact,
	\begin{equation*}
		|f(\cdot,u_{1})\t\Fi| \leq c_{0} (1+|b(u_{1})|)\Fi,
	\end{equation*}
	\begin{equation*}
		|f(\cdot,u_{2})(1-\t)\Fi| \leq c_{0} (1+|b(u_{2})|)\Fi.
	\end{equation*}
	So,  passing to the limit as $\varepsilon \rightarrow 0^{+}$,
	\begin{equation} \label{3.4}
		\begin{array}{ll}
			II  \to  \di\int_{\O} f(x,u^{*}) \Fi dx.
		\end{array}
	\end{equation}
	Hence, taking  $\varepsilon \rightarrow 0^{+}$ in \eqref{3.2}, we get for all $\Fi \in C^{\infty}_{c}(\O)_{+}$ that
	\begin{equation*}
		\iint_{\Rnn} g \left( \frac{u^{*}(x)-u^{*}(y)}{|x-y|^{s}} \right) \frac{\Fi(x)-\Fi(y)}{|x-y|^{s}} d \mu \geq \int_{\O} f(x,u^{*}) \Fi dx.
	\end{equation*}
	Finally by the definition of $E$, we obtain that the previous inequality holds for all test functions in $E_{+}$. Thus,  $u^{*}$ is a supersolution of \eqref{1.1}, as we want to prove.
	\end{proof}

Our next lemma proves that \textit{S}$(\uab, \uarr)$ is not empty.
	\begin{lema}\label{L3.2}
		Let $(\uab, \uarr)$ be a sub-supersolution pair of \eqref{1.1} with $f$ such that verifies $\textbf{H}_{0}$. Then, there exists $u \in$ \textit{S} $(\uab, \uarr)$.
	\end{lema}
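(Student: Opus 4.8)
The plan is to realize $u$ as a solution obtained by a truncation-and-fixed-point (or monotone iteration) scheme between $\uab$ and $\uarr$. First I would introduce the truncation operator $T\colon \W(\Rn)\to\W(\Rn)$ defined pointwise by $T(v)=\min\{\uarr,\max\{\uab,v\}\}$, and the truncated nonlinearity $\hat f(x,t):=f(x,T_{[\uab(x),\uarr(x)]}(t))$, i.e. $\hat f(x,t)=f(x,\uab(x))$ if $t<\uab(x)$, $\hat f(x,t)=f(x,t)$ if $\uab(x)\le t\le\uarr(x)$, and $\hat f(x,t)=f(x,\uarr(x))$ if $t>\uarr(x)$. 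Since $\uab,\uarr\in\W(\Rn)$ and $f$ satisfies $(\textbf H_0)$ with $B\ll G_{n/s}$, the Nemytskii operator $v\mapsto \hat f(\cdot,v)$ maps $\Wc$ into $(\Wc)^{*}$ compactly (using Theorem~\ref{embedding} for the compact embedding $\Wc\hookrightarrow L^{B}(\O)$, hence into $L^{1}$ after pairing with the bounded term $|b(u_i)|$), and it is bounded uniformly in $v$ because $\hat f(x,v)$ is dominated by $c_0(1+|b(\uab)|+|b(\uarr)|)\in L^{\Gm_{\ast}}$-type space. Then I would consider the auxiliary problem $\glap u=\hat f(x,u)$ in $\O$, $u=0$ in $\O^{c}$.

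Next I would solve this auxiliary problem. By Lemma~\ref{Lemma2.1} the operator $\glap\colon\Wc\to W^{-s,\Gm}(\Rn)$ is continuous, bounded, coercive, and of type $(S)_+$ (in fact uniformly monotone with a continuous inverse); the right-hand side $v\mapsto\hat f(\cdot,v)$ is a compact, bounded perturbation. Hence $u\mapsto \glap u-\hat f(\cdot,u)$ is a pseudomonotone, coercive operator from $\Wc$ to its dual — coercivity following from $\langle\glap u,u\rangle\ge p^{-}\rho_{s,G}(u)\ge p^{-}\min\{[u]_{s,G}^{p^{-}},[u]_{s,G}^{p^{+}}\}$ together with the uniform bound $|\langle \hat f(\cdot,u),u\rangle|\le C(1+[u]_{s,G})$ coming from the Hölder inequality and the fixed $L^{1}$-majorant. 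By the standard surjectivity theorem for pseudomonotone coercive operators (e.g. Theorem~27.2 in \cite{Z}), there exists $u\in\Wc$ with $\glap u=\hat f(\cdot,u)$, tested against all $v\in E$.

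Finally I would prove that any such $u$ satisfies $\uab\le u\le\uarr$ in $\O$, so that $\hat f(\cdot,u)=f(\cdot,u)$ and $u\in S(\uab,\uarr)$. For the upper bound, test the equation for $u$ and the supersolution inequality for $\uarr$ against $v=(u-\uarr)_{+}\in E_{+}$ (justified by density, since $(u-\uarr)_+$ vanishes in $\O^{c}$ and lies in $\Wc$, and approximating in $\W$-norm); subtracting gives
\begin{equation*}
\langle\glap u-\glap\uarr,(u-\uarr)_{+}\rangle\le\int_{\O}\big(\hat f(x,u)-f(x,\uarr)\big)(u-\uarr)_{+}\,dx=0,
\end{equation*}
because on $\{u>\uarr\}$ one has $\hat f(x,u)=f(x,\uarr)$ by the definition of the truncation. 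Using inequality \eqref{A2} of the paper (with $a=u(x)-u(y)$, $b=\uarr(x)-\uarr(y)$ suitably, and the elementary pointwise estimate for $g$) the left-hand side controls $\rho_{s,G}\big((u-\uarr)_{+}\big)$ from below up to a positive constant, forcing $[(u-\uarr)_{+}]_{s,G}=0$, hence $(u-\uarr)_{+}=0$, i.e. $u\le\uarr$; the bound $u\ge\uab$ is symmetric, testing against $(\uab-u)_{+}$.

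The main obstacle I expect is the monotonicity/comparison step: one must verify that the pointwise inequality needed to bound $\langle\glap u-\glap\uarr,(u-\uarr)_{+}\rangle$ from below by a multiple of $\rho_{s,G}((u-\uarr)_{+})$ actually holds, since the nonlocal term mixes the regions $\{u>\uarr\}$ and $\{u\le\uarr\}$ and the difference $g(\frac{u(x)-u(y)}{|x-y|^{s}})-g(\frac{\uarr(x)-\uarr(y)}{|x-y|^{s}})$ paired with $(u-\uarr)_{+}(x)-(u-\uarr)_{+}(y)$ must be shown nonnegative and bounded below — this is exactly where Lemma~\ref{desigualdad} and the identity $a-b=c-d$ trick, or the inequality \eqref{A2}, do the work, exploiting that $g=G'$ is non-decreasing. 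A secondary technical point is the admissibility of $(u-\uarr)_{+}$ as a test function in $E_{+}$ rather than merely in $\Wc$, which is handled by the definition of $E$ as the $\|\cdot\|_{s,G}$-closure of $C_c^\infty(\O)$ together with the earlier observation $E\subset\Wc$ and a density/truncation argument.
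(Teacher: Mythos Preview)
Your proposal is correct and follows essentially the same route as the paper: truncate $f$ between $\uab$ and $\uarr$, solve the truncated problem by the surjectivity theorem for bounded coercive pseudomonotone operators (using Lemma~\ref{Lemma2.1} for $\glap$ and the compact embedding of Theorem~\ref{embedding} for the Nemytskii term), and then recover $\uab\le u\le\uarr$ by testing against $(u-\uarr)_{+}$ and invoking \eqref{A2} together with Lemma~\ref{Lema2.1}/\ref{desigualdad}. You also correctly flag the $E_{+}$-versus-$\Wc_{+}$ test-function issue, which the paper passes over silently.
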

	\begin{proof}
		For simplicity, we let $A= \glap \colon \Wc \rightarrow \Wd$.  By Lemma \ref{Lemma2.1}, $A$ is monotone and hemicontinuous. Thanks to \cite[Lemma 2.98 $(i)$]{8} we know that also  $A$ is pseudomonotone.
	
	We consider a truncation of $f$ defined as
	\begin{equation*}
		\fm (x,t) = \left\{
		\begin{array}{ccll}
			f(x,\uab(x)) &~~& if~~t\leq \uab(x),\\
			f(x,t) &~~& if~~\uab(x) < t < \uab(x),\\
			f(x,\uarr(x)) &~~& if~~t\geq \uarr(x),
		\end{array}
		\right.
	\end{equation*}
	for all $(x,t) \in \O \times \R$. Observe that$\fm$ does not satisfy $\Hc$ in general, however $\fm \colon \O \times \R \to \R$ is a Carathéodory function which  satisfies the following bound for a.e. $x$ and all $t \in \R$,
	\begin{eqnarray}\label{3.5}
		|\fm(x,t)| \leq c_0 (1+|b(\uarr(x))|+|b(\uab(x))|) \in L^{\Bm}(\O).
	\end{eqnarray}

	We define $D \colon \Wc \to \Wd$ as
	\begin{equation*}
		\langle D(u), v \rangle = - \di\int_{\O} \fm (x,u)v~dx,
	\end{equation*}
	for all $u, v \in \Wc$. Observe that $D$  is well defined since by \eqref{3.5} and $G\ll B$, we obtain
	\begin{equation*}
		\begin{array}{ll}
			\left| - \di\int_{\O} \fm (x,u)v~dx \right|  &\leq \di\int_{\O} |\fm (x,u)| |v|~dx\\
			%&\leq \di\int_{\O} c_{0}(1+b(\uarr)+b(\uab))|v|~dx\\
			%&= c_{0} \left[ \di\int_{\O} |v|~dx + \di\int_{\O} b(\uab)|v|~dx + \di\int_{\O} b(\uarr)|v|~dx \right] \\
			%&\leq c_{0} \left[ \dfrac{p^{-}}{c_{1}} \di\int_{\O} G\left( |v|\right) ~dx + \di\int_{\O} |b(u)||v|~dx \right] \\
			&\leq  2 ||\fm (x,u)||_{\Bm,\Omega} ||v||_{B,\Omega} \\
			&\leq C ||\fm (x,u)||_{\Bm,\Omega} ||v||_{s,G,\Omega} \\
			&< \infty.
		\end{array}
	\end{equation*}
	Moreover, $D$ is linear and continuous.
		
	We now  prove that $D$ is strongly continuous. Let $\{u_{n}\}_{n\in\N}$ be a sequence such that $u_{n} \rightharpoonup u$ in $\Wc$, and take any subsequence of $\{D(u_{n})\}_{n\in\N}$ that we still denote by $\{D(u_{n})\}_{n\in\N}$. Passing to a further subsequence if necessary, we have $u_{n} \to u$ in $L^{B}(\O)$, $u_{n}(x) \to u(x)$ and $|u_{n}(x)| \leq h(x)$ for a.e. $x \in \O$, for some $h \in L^{B} (\O)$. Therefore, for all $n \in \N$, by $\Hc$ and Lemma  \ref{Lema2.2} we have for a.e. $x \in \O$
			\begin{equation*}
				\begin{array}{ll}
					|\fm (x, u_{n}) - \fm (x,u)| &\leq |\fm (x, u_{n})| + |\fm (x,u)| \\
					%&\leq [c_{0} (1+|b(\uarr_{n})|+|b(\uab_{n}))|] + [c_{0} (1+|b(\uarr)|+|b(\uab)|)] \\
					&= 2c_{0} + c_{0}| b(\uarr_{n})|+ c_{0} |b(\uab_{n})| + c_{0} |b(\uarr)|+ c_{0} |b(\uab)| \in L^{\Bm}(\O).
				\end{array}
			\end{equation*}
	%	as
%		\begin{equation*}
%			\begin{array}{ll}
%				\di\int_{\O} \Bm(|b(\uarr_{n})|)dx &\leq \di\int_{\O} (p^{+}-1) B(|\uarr_{n}|)dx \\
%				&= (p^{+}-1) \di\int_{\O} B(|\uarr_{n}|)dx \\
%				&= (p^{+}-1) \rho_{B,\O}(\uarr_{n}) < \infty,
%			\end{array}
%		\end{equation*}
%		and similarity we prove $b(\uab_{n}), b(\uarr), b(\uab) \in L^{\Bm}(\O)$,
%		\begin{equation*}
%			\begin{array}{ll}
%				\di\int_{\O} \Bm(|c_{0}|)dx	&= \Bm (|c_{0}|) |\O| < \infty.
%			\end{array}
%		\end{equation*}
	Observe that by continuity of $f(x, \cdot)$ we have $\fm (x,u_{n}) \to \fm(x,u)$ a.e.  Moreover, for all $v \in \Wc$,
	\begin{equation}\label{rhs 1}
		\begin{array}{ll}
			|\langle D(u_{n}) - D(u), v \rangle| &\leq \di\int_{\O} |\fm (x, u_{n}) - \fm (x,u)| |v| dx\\
			&\leq 2 ||\fm (x, u_{n}) - \fm (x,u)||_{\Bm,\Omega} ||v||_{B,\Omega}\\
			&\leq C ||\fm (x, u_{n}) - \fm (x,u)||_{\Bm,\Omega} ||v||_{s, G,\Omega}.\\
		\end{array}
	\end{equation}
	By  dominated convergence Theorem, the right-hand side of \eqref{rhs 1}  tends to 0 uniformly for any $v$ with $\|v\|_{s, G,\Omega}\leq 1$. Hence $D(u_{n}) \to D(u)$ in $\Wd$. Therefore, any subsequence of $\{D(u_{n})\}_{n\in\N}$ has a further subsequence that converges to $D(u)$. Hence, the whole sequence $\{D(u_{n})\}_{n\in\N}$ converges to $D(u)$ and $D$ is strongly continuous. Thanks to \cite[Lemma 2.98 $(ii)$]{8}, $D$ is pseudomonotone. Moreover,  $A+D$ is pseudomonotone by \cite[Proposition 27.7]{Z}.
	
	Now we  prove that $A+D$ sends bounded sets into bounded set. We already stated that $A$ is bounded. Regarding  $D$, observe that
	\begin{equation*}
		\begin{array}{ll}
			||D(u)||_{-s,\Gm} &= \di\sup_{||v||_{s,G,\Omega} \leq 1} \langle D(u), v \rangle\\
			&= \di\sup_{||v||_{s,G,\Omega} \leq 1} - \di\int_{\O} \fm (x,u) v dx
			 \\
			%&\leq \di\sup_{||v||_{s,G} \leq 1} \di\int_{\O} (c_{0}(1+b(\uarr)+b(\uab))) |v| dx \\
			%&\leq \di\sup_{||v||_{s,G} \leq 1} c_{0} \left( \di\int_{\O} |v| dx + \di\int_{\O} |b(\uarr)| |v| dx + \di\int_{\O} |b(\uab)| |v| dx\right)  \\
			&\leq \di\sup_{||v||_{s,G,\Omega} \leq 1}  2||\fm (x,u)||_{\Bm,\Omega} ||v||_{B,\Omega}\\&\leq C\di\sup_{||v||_{s,G,\Omega} \leq 1}   ||\fm (x,u)||_{\Bm,\Omega} ||v||_{s,G,\Omega}
\\ & \leq C\|c_0(1+ |b(\overline{u})|+ |b(\underline{u})|)\|_{\Bm,\Omega}. \\
			%&\leq \di\sup_{||v||_{s,G} \leq 1} c_{0} \left( \dfrac{p^{-}}{c_{1}} \xi^{+} (||v||_{G}) + ||b(u)||_{\Bm} ||v||_{B} \right)  \\
		%	&\leq \di\sup_{||v||_{s,G} \leq 1} c_{0} \left( C ||v||_{s,G} + C ||b(\uab)||_{\Bm} ||v||_{s,G} + C||b(\uarr)||_{\Bm} ||v||_{s,G} \right) \\
%			&= c_{0} \left( C + C ||b(u)||_{\Bm} + C||b(\uarr)||_{\Bm} \right),
		\end{array}
	\end{equation*}Thus, $D$ sends bounded sets into bounded sets.
	
	It remains to check that $A+D$ is coercive. In fact, for all $u \in \Wc$ with $||u||_{s,G}\geq 1$ we have
		\begin{equation*}
			\begin{array}{ll}
				\dfrac{\langle A(u) + D(u), u \rangle}{||u||_{s,G}} &= \dfrac{\langle A(u), u \rangle}{||u||_{s,G}} + \dfrac{\langle D(u), u \rangle}{||u||_{s,G}}\\
				&\geq \dfrac{p^-\rho_{s,G}(u)}{||u||_{s,G}} - \dfrac{1}{||u||_{s,G}} \di\int_{\O} \fm (x,u)u~dx\\
				&\geq \dfrac{p^-\xi^{-} (\left[ u \right] _{s,G})}{||u||_{s,G}} - \dfrac{1}{||u||_{s,G}} \di\int_{\O} |\fm (x,u)| |u|~dx\\
				&\geq p^-\left[ u \right] _{s,G} ^{p^{-}-1} - \dfrac{2}{||u||_{s,G}}||\fm (x,u)||_{\Bm,\Omega}||u||_{B,\Omega}\\
				&\geq p^-\left[ u \right] _{s,G} ^{p^{-}-1} - \dfrac{C}{||u||_{s,G}}||\fm (x,u)||_{\Bm,\Omega}||u||_{s,G}\\
				&\geq p^-\left[ u \right] _{s,G} ^{p^{-}-1}-C.
				%&\geq \frac{1}{2}\left[ u \right] _{s,G} ^{p^{-}-1} - \dfrac{c_{0}}{||u||_{s,G}} \left[ \di\int_{\O} |u| dx + \di\int_{\O} |b(\uarr)| |u|~dx + \di\int_{\O} |b(\uab)| |u|~dx\right] \\				
%				&\geq \dfrac{1}{2}\left[ u \right] _{s,G} ^{p^{-}-1} - \dfrac{c_{0}}{||u||_{s,G}} \left( 2||1||_{\Gm} ||u||_{G} + 2||b(\uarr)||_{\Bm} ||u||_{B} + 2||b(\uab)||_{\Bm} ||u||_{B} \right)  \\
%				&\geq \dfrac{1}{2}\left[ u \right] _{s,G} ^{p^{-}-1} - \dfrac{c_{0}}{||u||_{s,G}} \left( C ||u||_{s,G} + C ||b(\uarr)||_{\Bm} ||u||_{s,G} + C ||b(\uab)||_{\Bm} ||u||_{s,G} \right)  \\
%				&\geq \dfrac{1}{2}\left[ u \right] _{s,G} ^{p^{-}-1} - c_{0} \left( C + C ||b(\uarr)||_{\Bm} + C ||b(\uab)||_{\Bm} \right)
			\end{array}
		\end{equation*}
		Observe that this goes to infinity when   $\left[ u \right] _{s,G} \to \infty$, as we want to prove.
		
		Appealing to \cite[Theorem 2.99]{8},  the equation
		\begin{equation}\label{3.6}
			A(u)+D(u) = 0 \text{~~~~~in $\Wd$}
		\end{equation}
		has a solution $u \in \Wc$.
 Next,  we prove that in $\O$
		\begin{equation}\label{3.7}
			\uab \leq u \leq \uarr.
		\end{equation}
		Observe that \eqref{3.7} holds in $\O^{c}$. Now we use $(u-\uarr)_{+} \in \Wc_{+}$ as a  test function  in   \eqref{3.6}. Then
		\begin{equation*}
				\langle A(u) + D(u),(u-\uarr)_{+}  \rangle = \langle A(u) ,(u -\uarr)_{+}  \rangle + \langle D(u),(u- \uarr)_{+}  \rangle = 0.
		\end{equation*}Thus,
		\begin{equation*}
			\langle \glap u ,(u- \uarr)_{+}  \rangle = \di\int_{\O} \fm (x,u)(u -\uarr)_{+}~dx.
		\end{equation*}
		Observe now that in the support of $(u -\uarr)_{+}$ we have  $\fm (x,u) = f(x,\uarr)$. Hence,
		\begin{equation*}
			\begin{array}{ll}
			&\langle \glap u ,(u- \uarr)_{+}  \rangle = \di\int_{\O} f (x,\uarr)(u-\uarr)_{+}~dx\\
			&\leq  \di\iint_{\Rnn} g \left( \dfrac{\uarr(x)-\uarr(y)}{|x-y|^{s}} \right) \dfrac{(u-\uarr)_{+}(x)-(u-\uarr)_{+}(y)}{|x-y|^{s}} d\mu\\
			&= \langle \glap \uarr,(u-\uarr)_{+}  \rangle,
			\end{array}
		\end{equation*}
		where we also used that $\uarr$ is a supersolution of \eqref{1.1}, so
		\begin{equation}\label{ine lap}
			\langle \glap u - \glap \uarr ,(u- \uarr)_{+} \rangle \leq 0.
		\end{equation}
	
	In the next calculations, we first  use \eqref{A2} with $$a= \dfrac{(u-\uarr)(x)}{|x-y|^{s}}$$and$$b= \dfrac{(u-\uarr)(y)}{|x-y|^{s}}.$$Also,  by  \eqref{2.1}  and the fact that $u(x)-u(y)\leq \overline{u}(x)-\overline{u}(y)$ if and only if\\
	$(u-\uarr)_{+} (x) \leq (u-\uarr)_{+} (y)$, we have
		\begin{equation*}
			\begin{array}{ll}
				&\rho_{s,G} \left( (u- \uarr)_{+} \right) = \di\iint_{\Rnn} G \left( \dfrac{|(u-\uarr)_{+} (x) - (u-\uarr)_{+} (y)|}{|x-y|^{s}} \right) d \mu \\
				&\leq \di\iint_{\Rnn} g \left( \dfrac{(u-\uarr) (x) - (u-\uarr) (y)}{|x-y|^{s}} \right) \dfrac{(u-\uarr)_{+} (x) - (u-\uarr)_{+} (y)}{|x-y|^{s}} d \mu \\
%				&= \di\int \int_{\Rnn} g \left( \dfrac{u(x)-u(y)-(\uarr(x)-\uarr(y))}{|x-y|^{s}} \right) \dfrac{(u-\uarr)^{+} (x) - (u-\uarr)^{+} (y)}{|x-y|^{s}} d \mu \\
				&\leq \dfrac{1}{C}  \di\iint_{\Rnn} \left[ g \left( \dfrac{u(x) - u(y)}{|x-y|^{s}} \right) - g \left( \dfrac{\uarr(x) - \uarr (y)}{|x-y|^{s}} \right) \right] \dfrac{(u-\uarr)_{+} (x) - (u-\uarr)_{+} (y)}{|x-y|^{s}} d \mu  \\
				&= \dfrac{1}{C} \di\iint_{\Rnn} g \left( \dfrac{u(x) - u(y)}{|x-y|^{s}} \right) \dfrac{(u-\uarr)_{+} (x) - (u-\uarr)_{+} (y)}{|x-y|^{s}} d\mu \\
				&-  \dfrac{1}{C} \di\iint_{\Rnn} g \left( \dfrac{\uarr(x) - \uarr (y)}{|x-y|^{s}} \right) \dfrac{(u-\uarr)_{+} (x) - (u-\uarr)_{+} (y)}{|x-y|^{s}} d \mu  \\
%				& \text{MULTIPLICO Y DIVIDO POR }|.|/|.| \\
%				&= \dfrac{1}{C} \di\int \int_{\Rnn} g \left( \dfrac{u(x) - u(y)}{|x-y|^{s}} \right) \dfrac{|u(x)-u(y)|}{|u(x)-u(y)|} \dfrac{(u-\uarr)^{+} (x) - (u-\uarr)^{+} (y)}{|x-y|^{s}} d\mu \\
%				&-  \dfrac{1}{C} \di\int \int_{\Rnn} g \left( \dfrac{\uarr(x) - \uarr (y)}{|x-y|^{s}} \right) \dfrac{|\uarr(x)-\uarr(y)|}{|\uarr(x)-\uarr(y)|} \dfrac{(u-\uarr)^{+} (x) - (u-\uarr)^{+} (y)}{|x-y|^{s}} d \mu  \\
%				&= --------------------------------------------------\\
	%			&\leq C' \di\iint_{\Rnn} g \left( \dfrac{|u(x)-u(y)|}{|x-y|^{s}} \right) \dfrac{u(x)-u(y)}{|u(x)-u(y)|} \dfrac{(u-\uarr)^{+}(x)-(u-\uarr)^{+}(y)}{|x-y|^{s}} d \mu \\
%				&- C' \di\iint_{\Rnn} g \left( \dfrac{|\uarr(x)-\uarr(y)|}{|x-y|^{s}} \right) \dfrac{\uarr(x)-\uarr(y)}{|\uarr(x)-\uarr(y)|} \dfrac{(u-\uarr)^{+}(x)-(u-\uarr)^{+}(y)}{|x-y|^{s}} d \mu\\
				%&= C' \left[ \langle \glap u,(u-\uarr)^{+}  \rangle - \langle \glap \uarr,(u-\uarr)^{+}  \rangle \right]  \\
				&= C' \left[ \langle \glap u - \glap \uarr,(u-\uarr)_{+}  \rangle \right].
			\end{array}
		\end{equation*}
	Consequently, by \eqref{ine lap}, it follows $(u-\uarr)_{+}=0$. Analogously we prove $u \geq \uab$, so the inequality \eqref{3.7} holds.
	
	Finally, using \eqref{3.7} in \eqref{3.6} we see that $u \in \Wc$ solves \eqref{1.1} in the sense of Definition \ref{defi solution} (since $E \subset W^{s,G}(\mathbb{R}^n)$) and $u \in \Su$ as we want to prove.
	\end{proof}

We end the section giving a result related to extremal elements in $\Su$. For that, we will need two preliminary results. The proofs below are similar to those from \cite{Base}. However, we write them for convenience of the reader.

	We  recall the following definition.
	\begin{defi} We say that a partially ordered set $(S, \leq)$ is \textsl{downward directed} (resp. \textsl{upward directed}) if for all $u_{1}, u_{2} \in S$ there exists $u_{3} \in S$ such that $u_{3} \leq u_{1}, u_{2}$ (resp. $u_{3} \geq u_{1}, u_{2}$).
	
Also, we say that $S$ is \textsl{directed} if it is both downward and upward directed.
	
\end{defi}
	\begin{lema}\label{L3.3}
		Let $(\uab, \uarr)$ be a sub-supersolution pair of \eqref{1.1} with $f$ such that verifies $\textbf{H}_{0}$. Then, $\Su$ is directed.
	\end{lema}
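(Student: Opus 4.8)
The plan is to show that $S(\uab,\uarr)$ is both downward and upward directed; by symmetry I will describe only the downward case (the upward case is identical after reversing inequalities and using Lemma~\ref{L3.1}\ref{ii} and Remark~\ref{negativo} in place of their counterparts). So let $u_1,u_2\in S(\uab,\uarr)$; I want to produce $u_3\in S(\uab,\uarr)$ with $u_3\le u_1$ and $u_3\le u_2$ in $\Omega$. The natural candidate on which to build is $u^*=\min\{u_1,u_2\}$. Since $u_1,u_2$ are in particular supersolutions of \eqref{1.1} (they are solutions), Lemma~\ref{L3.1}\ref{i} gives that $u^*$ is a supersolution. Moreover $\uab\le u_i\le\uarr$ forces $\uab\le u^*\le\uarr$ in $\Omega$, and $u^*=0$ in $\Omega^c$, so the pair $(\uab,u^*)$ is again a sub-supersolution pair of \eqref{1.1} with $\uab$ unchanged.

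Next I would apply Lemma~\ref{L3.2} to this new pair $(\uab,u^*)$: it yields a solution $u_3\in S(\uab,u^*)$, i.e.\ $u_3\in\Wc$ solves \eqref{1.1} and $\uab\le u_3\le u^*$ in $\Omega$. In particular $u_3\le u^*\le u_1$ and $u_3\le u^*\le u_2$, and since also $\uab\le u_3\le\uarr$ (because $u^*\le\uarr$), we have $u_3\in S(\uab,\uarr)$. This is precisely the element required by the definition of downward directed. For the upward direction, one sets $u_*=\max\{u_1,u_2\}$, which is a subsolution by Lemma~\ref{L3.1}\ref{ii} and satisfies $\uab\le u_*\le\uarr$, so $(u_*,\uarr)$ is a sub-supersolution pair; Lemma~\ref{L3.2} then produces $u_3\in S(u_*,\uarr)\subset S(\uab,\uarr)$ with $u_3\ge u_1,u_2$.

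The only point that needs a small verification is that the truncated nonlinearity in Lemma~\ref{L3.2} may be applied with the new pair: this is immediate because $(\textbf{H}_0)$ is a hypothesis on $f$ alone and does not depend on the sub-supersolution pair, and the bound \eqref{3.5} used in that proof only requires $\uab,u^*\in L^{\widetilde B}(\Omega)$-type control through $b(\uab),b(u^*)$, which holds since $|b(u^*)|\le|b(u_1)|+|b(u_2)|$ by monotonicity of $b$ and $u_1,u_2\in\Wc$. Thus no new analytic difficulty arises; the argument is a direct combination of Lemmas~\ref{L3.1} and~\ref{L3.2}.

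The main (and really only) obstacle is conceptual rather than technical: one must resist the temptation to take $u^*=\min\{u_1,u_2\}$ itself as $u_3$, since a minimum of two solutions need not solve \eqref{1.1} — it is merely a supersolution. The fix, as above, is to feed $u^*$ back as the supersolution in Lemma~\ref{L3.2} and extract an honest solution trapped below it. Once this is realized, the proof is short.

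\begin{proof}
We prove that $\Su$ is downward directed; the proof that it is upward directed is completely analogous, replacing Lemma~\ref{L3.1}\ref{i} by Lemma~\ref{L3.1}\ref{ii}, the function $\min$ by $\max$, and reversing the corresponding inequalities.

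Let $u_{1}, u_{2} \in \Su$. In particular, $u_1$ and $u_2$ are supersolutions of \eqref{1.1}, so by Lemma~\ref{L3.1}\ref{i} the function $u^{*}:=\min\{u_{1}, u_{2}\} \in \W(\Rn)$ is a supersolution of \eqref{1.1}. Since $\uab \leq u_{i} \leq \uarr$ in $\O$ for $i=1,2$, we also have $\uab \leq u^{*} \leq \uarr$ in $\O$, and $u^{*}=0$ in $\O^{c}$. Hence $(\uab, u^{*})$ is a sub-supersolution pair of \eqref{1.1}.

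By Lemma~\ref{L3.2} applied to the pair $(\uab, u^{*})$, there exists $u_{3} \in S(\uab, u^{*})$, that is, $u_{3} \in \Wc$ is a weak solution of \eqref{1.1} with $\uab \leq u_{3} \leq u^{*}$ in $\O$. In particular $u_{3} \leq u^{*} \leq u_{1}$ and $u_{3} \leq u^{*} \leq u_{2}$ in $\O$, and since $u^{*} \leq \uarr$ we obtain $\uab \leq u_{3} \leq \uarr$ in $\O$. Therefore $u_{3} \in \Su$ and $u_{3} \leq u_{1}, u_{2}$, which shows that $\Su$ is downward directed.

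For the upward direction, given $u_{1}, u_{2}\in\Su$, the function $u_{*}:=\max\{u_{1},u_{2}\}\in\W(\Rn)$ is a subsolution of \eqref{1.1} by Lemma~\ref{L3.1}\ref{ii}, and $\uab\leq u_{*}\leq\uarr$ in $\O$ with $u_{*}=0$ in $\O^{c}$, so $(u_{*},\uarr)$ is a sub-supersolution pair. Lemma~\ref{L3.2} then provides $u_{3}\in S(u_{*},\uarr)\subset\Su$ with $u_{3}\geq u_{*}\geq u_{1},u_{2}$ in $\O$. Hence $\Su$ is also upward directed, and consequently directed.
\end{proof}
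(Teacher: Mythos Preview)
Your proof is correct and follows essentially the same approach as the paper: take $u^{*}=\min\{u_{1},u_{2}\}$, use Lemma~\ref{L3.1}\ref{i} to see it is a supersolution with $\uab\le u^{*}$, then apply Lemma~\ref{L3.2} to the pair $(\uab,u^{*})$ to obtain $u_{3}\in S(\uab,u^{*})\subset\Su$ with $u_{3}\le u_{1},u_{2}$; the upward case is symmetric. Your write-up is more detailed than the paper's (you explicitly verify that $(\uab,u^{*})$ is a sub-supersolution pair and that the hypotheses of Lemma~\ref{L3.2} still apply), but the argument is the same.
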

	\begin{proof}
		We prove that $\Su$ is downward directed. Let $u_{1}, u_{2} \in \Su$, then in particular $u_{1}, u_{2}$ are supersolutions of \eqref{1.1}. Set $u^*= \min \left\lbrace u_{1}, u_{2} \right\rbrace  \in \Wc$, then by Lemma \ref{L3.1} $u^*$ is a supersolution of \eqref{1.1} and $\uab \leq u^*$. By Lemma \ref{L3.2} there exists $u_{3} \in S(\uab, u^*)$, in particular $u_{3} \in \Su$ and $u_{3} \leq u^*$.

		In the same way we see that $\Su$ is upward directed. And then $\Su$ is directed.
	\end{proof}
	
	\begin{lema}\label{L3.4}
	Let $(\uab, \uarr)$ be a sub-supersolution pair of \eqref{1.1} with $f$ such that verifies $\Hc$. Then $\Su$ is compact in $\Wc$.
	\end{lema}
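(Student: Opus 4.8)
The plan is to show that $\Su$ is both bounded and closed in $\Wc$, and then that every sequence in $\Su$ admits a subsequence converging \emph{strongly} in $\Wc$, using the $(S)_+$-property of $\glap$ from Lemma \ref{Lemma2.1}. First I would establish boundedness: for $u \in \Su$ we have $\uab \le u \le \uarr$ pointwise, so $|u| \le |\uab| + |\uarr|$ a.e., and testing the equation $\langle \glap u, u\rangle = \int_\O f(x,u)u\,dx$ against $u$ itself, the hypothesis $(\textbf{H}_0)$ together with $G \ll B \ll G_{\frac n s}$ and the H\"older inequality gives
\begin{equation*}
p^- \min\left\{[u]_{s,G}^{p^-}, [u]_{s,G}^{p^+}\right\} \le p^-\rho_{s,G}(u) \le \langle \glap u, u\rangle = \int_\O f(x,u)u\,dx \le C\|1 + |b(\uarr)| + |b(\uab)|\|_{\Bm,\O}\,\|u\|_{s,G},
\end{equation*}
where I used Lemma \ref{norm and modular} and the bound on $\fm$-type terms exactly as in the coercivity estimate of Lemma \ref{L3.2}. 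Since the right-hand side grows linearly in $\|u\|_{s,G}$ while the left-hand side grows like $[u]_{s,G}^{p^-}$ with $p^->1$, and $\|u\|_{G,\O}$ is controlled by the $L^B$-norm of $|\uab|+|\uarr|$, this yields a uniform bound $\|u\|_{s,G} \le R$ for all $u \in \Su$.

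Next, given a sequence $\{u_n\} \subset \Su$, by reflexivity of $\Wc$ there is a subsequence (not relabeled) with $u_n \rightharpoonup u$ in $\Wc$; by the compact embedding of Theorem \ref{embedding} (using $B \ll G_{\frac n s}$) we get $u_n \to u$ in $L^B(\O)$, $u_n \to u$ a.e., and a dominating function $h \in L^B(\O)$. The pointwise sandwich $\uab \le u_n \le \uarr$ passes to the limit, so $\uab \le u \le \uarr$. To upgrade weak to strong convergence I would test the equation for $u_n$ with $u_n - u \in \Wc$ (here one must pass from test functions in $E$ to test functions in $\Wc$, which is legitimate since $E$ is dense and both sides are continuous in the $\Wc$-topology, exactly as invoked at the end of Lemma \ref{L3.2}), obtaining
\begin{equation*}
\langle \glap u_n, u_n - u\rangle = \int_\O f(x, u_n)(u_n - u)\,dx.
\end{equation*}
The right-hand side tends to $0$: by $(\textbf{H}_0)$, $|f(x,u_n)| \le c_0(1 + |b(\uarr)| + |b(\uab)|) \in L^{\Bm}(\O)$ uniformly in $n$, and $u_n - u \to 0$ in $L^B(\O)$, so H\"older's inequality closes the estimate. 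Hence $\limsup_n \langle \glap u_n, u_n - u\rangle \le 0$, and since $u_n \rightharpoonup u$, the $(S)_+$-property of $\glap$ (Lemma \ref{Lemma2.1}) forces $u_n \to u$ strongly in $\Wc$.

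It remains to check $u \in \Su$, i.e. that $u$ solves \eqref{1.1}. For any fixed $v \in E$, continuity of $\glap$ gives $\langle \glap u_n, v\rangle \to \langle \glap u, v\rangle$, while $\int_\O f(x,u_n)v\,dx \to \int_\O f(x,u)v\,dx$ by dominated convergence (same majorant as above, times $|v|$). Therefore $\langle \glap u, v\rangle = \int_\O f(x,u)v\,dx$ for all $v \in E$, so $u$ is a weak solution; combined with $\uab \le u \le \uarr$ this gives $u \in \Su$, and since the original sequence was arbitrary, $\Su$ is (sequentially) compact in $\Wc$. The main obstacle I anticipate is the weak-to-strong upgrade: one must be careful that $u_n - u$ is an admissible test function and that the $(S)_+$-inequality is set up with the correct sign — everything else is a routine repetition of the growth estimates already developed for Lemma \ref{L3.2}.
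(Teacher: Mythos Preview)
Your proposal is correct and follows essentially the same route as the paper's proof: bound the sequence by testing the equation with $u_n$, extract a weakly convergent subsequence, test with $u_n-u$ to get $\langle \glap u_n, u_n-u\rangle \to 0$, invoke the $(S)_+$-property of Lemma \ref{Lemma2.1} for strong convergence, and pass to the limit in the equation. The only cosmetic differences are that the paper bounds $\int_\O f(x,u_n)u_n\,dx$ directly via the pointwise inequality $|u_n|\le |\uab|+|\uarr|$ (rather than your linear-versus-superlinear growth comparison) and handles $\int_\O f(x,u_n)(u_n-u)\,dx \to 0$ by dominated convergence with the majorant $2c_0(1+|b(h)|)(|\uab|+|\uarr|)$ instead of your H\"older argument; both variants are equally valid.
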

	\begin{proof}
		Let $\{u_{n}\}_{n\in\N}$ be a sequence in $\Su$, then for all $n \in \N$ and $v\in \Wc$
	\begin{equation}\label{3.8}
		\langle \glap u_{n} ,v  \rangle = \di\int_{\O} f (x,u_{n})v~dx
	\end{equation}and $\uab \leq u_{n} \leq \uarr$. Using $u_{n} \in \Wc$ as a test function in \eqref{3.8}, we have by $\Hc$
	\begin{equation*}
		\begin{array}{ll}
			\langle \glap u_{n} , u_{n}  \rangle %&= \di\int_{\O} f (x,u_{n}) u_{n}~dx\\
			&\leq \di\int_{\O} |f (x,u_{n})| |u_{n}|~dx \\
			%&\leq c_{0} \di\int_{\O} \left( 1+b(u_{n})\right) |u_{n}|dx \\
			&\leq c_{0} \left[ \di\int_{\O} |u_{n}| dx + \di\int_{\O} |b(u_{n})| |u_{n}|~dx\right] \\	
	&\leq c_{0} \left[ \di\int_{\O} |u_{n}| dx + p_B^+\di\int_{\O} B(|u_{n}|)~dx\right] \\	
&\leq c_0\left[\|\uab\|_{1,\Omega}+\|\uarr\|_{1,\Omega}+p_B^+ \dfrac{C}{2}\rho_{B,\Omega}(\uab)+p_B^+ \dfrac{C}{2}\rho_{B,\O}(\uarr)\right]
%			&\leq c_{0} \left( ||1||_{\Gm} ||u||_{G} + ||b(u)||_{\Bm} ||u||_{B} \right)  \\
%			&\geq \left[ u \right] _{s,G} ^{p^{-}-1} - \dfrac{c_{0}}{||u||_{s,G}} \left( C ||u||_{s,G} + C ||b(u)||_{\Bm} ||u||_{s,G} \right)  \\
%			&\geq \left[ u \right] _{s,G} ^{p^{-}-1} - c_{0} \left( C + C ||b(u)||_{\Bm} \right)\\			
%			&= c_{0} \left[ \int_{\O} |u_{n}|dx + \int_{\O} |b(u)||u_{n}|dx \right] \\
%			&\leq c_{0} \left[ \di\int_{\O} |\uab|dx + \di\int_{\O} |\uarr|dx + \di\int_{\O} |b(u_{n})||\uab|dx + \di\int_{\O} |b(u_{n})||\uarr|dx \right] \\
%			&\leq c_{0} \left[ 2||1||_{\Gm} ||\uab||_{G} + 2||1||_{\Gm} ||\uarr||_{G} + 2||b(u_{n})||_{\Bm} ||\uab||_{B} + 2||b(u_{n})||_{\Bm} ||\uarr||_{B} \right]  \\
%			&\leq c_{0} \left[ 2||1||_{\Gm} ||\uab||_{G} + 2||1||_{\Gm} ||\uarr||_{G} + C||b(u_{n})||_{\Gm} ||\uab||_{G} + C||b(u_{n})||_{\Gm} ||\uarr||_{G} \right]  \\
			\leq C.
		\end{array}
	\end{equation*}
	So $\{u_{n}\}_{n\in\N}$ is bounded in $\Wc$. Passing to a subsequence, we have $u_{n} \rightharpoonup u$ in $\Wc$, $u_{n}(x) \rightarrow u(x)$ and $|u_{n}|\leq h$ for a.e $x \in \Omega$ and $n\in\N$, with $h \in L^{G}(\O)$. Moreover,
	\begin{equation*}
			|f (x,u_{n})(u_{n}-u)| %&\leq c_{0} \left( 1+|b(u_{n})|\right) |u_{n}-u|\\
			%&\leq c_{0} \left( 1+b(u_{n})\right) \left( |u_{n}|+|u|\right)\\
			%&\leq c_{0} \left( 1+b(h)\right) \left( |u|+|u|\right) \\
			%&= c_{0} \left( 1+b(h)\right) \left( 2|u|\right) \\
			\leq 2 c_{0} \left( 1+|b(h)|\right) \left( |\uab|+|\uarr|\right) \in L^{1}(\O).
	\end{equation*}

	%as
%	\begin{equation*}
%		\begin{array}{ll}
%			\int_{\O}b(h(x)) |\uab| dx \leq 2||\uab||_{B} ||b(h)||_{\Bm} \leq C ||\uab||_{G} ||b(h)||_{\Gm}< \infty\\
%			\int_{\O} c_{0}dx = c_{0}|\O| < \infty.
%		\end{array}
%	\end{equation*}
	Now, using $u_{n}-u \in \W_0 (\O)$ as a test function in \eqref{3.8}, we get
	\begin{equation}\label{eqq3}
		\langle \glap u_{n} ,u_{n}-u  \rangle = \di\int_{\O} f (x,u_{n})(u_{n}-u)~dx.
	\end{equation}
	Using Dominated convergence Theorem  we  have   that \eqref{eqq3}  tends to $0$ as $n \rightarrow \infty$. By Lemma \ref{Lemma2.1} we have $u_{n} \rightarrow u$ in $\Wc$. Finally, taking limit in \eqref{3.8} we obtain that $u \in \Su$.
	\end{proof}

	As an application of the previous results, we  obtain proceeding exactly as in \cite[Theorem 3.5]{Base} that $\Su$ contains extremal elements with respect to the pointwise ordering. Also, observe that we  apply Theorem \ref{L2.5} to have  $C^{\alpha}(\Oc)$ regularity of the solutions.
\begin{teo}\label{T3.5}
		Assume that $f(u)=b(u)$, $b(0)=0$,  where $b=B'$, with $B$ an N-function  so that $G \ll B \ll G_{\frac{n}{s}}$ and it satisfies \eqref{G1}. Moreover, assume that $G$ fulfils \eqref{cG} and \eqref{cG0}, with $g=G'$ convex in $(0, \infty)$ and $p^{-} > 2$. Let  $(\uab,\uarr)$ be a sub-supersolution pair of \eqref{1.1}. Then $\Su$ contains a smallest and a biggest element.
	\end{teo}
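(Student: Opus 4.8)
The plan is to follow the classical argument for existence of extremal solutions in a sub-supersolution order interval, as in \cite[Theorem 3.5]{Base}, now built on the three structural lemmas just established: Lemma \ref{L3.1} (min of supersolutions / max of subsolutions), Lemma \ref{L3.3} ($\Su$ is directed) and Lemma \ref{L3.4} ($\Su$ is compact in $\Wc$). The extra regularity hypotheses (\eqref{cG}, \eqref{cG0}, $g$ convex on $(0,\infty)$, $p^->2$, together with $G \ll B \ll G_{n/s}$ and $sp^+<n$) enter only through Theorem \ref{L2.5}, which upgrades every $u \in \Su$ to a uniformly bounded $C^\alpha(\Oc)$ function; this is used to make the pointwise ordering compatible with norm convergence but is not essential to the lattice structure.

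First I would show $\Su$ has a smallest element. Since $\Wc$ is separable, $\Su$ (a nonempty subset, nonempty by Lemma \ref{L3.2}) contains a countable dense subset $\{v_n\}_{n\in\N}$. Using that $\Su$ is downward directed (Lemma \ref{L3.3}), define recursively $w_1 = v_1$ and choose $w_{n+1} \in \Su$ with $w_{n+1} \le \min\{w_n, v_{n+1}\}$; this is possible again by Lemma \ref{L3.3} combined with Lemma \ref{L3.1}, and each such $w_{n+1}$ can be taken in $\Su$ by Lemma \ref{L3.2} applied to the pair $(\uab, \min\{w_n,v_{n+1}\})$. Then $\{w_n\}$ is a non-increasing sequence in $\Su$ bounded below by $\uab$; by compactness (Lemma \ref{L3.4}) a subsequence converges in $\Wc$, and by monotonicity the whole sequence converges to some $u_* \in \Su$. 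It remains to check $u_* \le u$ for every $u \in \Su$: given $u \in \Su$ and $\ve>0$, pick $v_n$ with $\|v_n - u\|_{s,G}<\ve$; using $C^\alpha$-regularity and the continuous embedding one passes from $L^G$-closeness to control of $(u_* - u)_+$, or more directly one argues that $\min\{u_*, u\} \in \Su$ lies below $u_*$ and, by density together with the construction, must coincide with $u_*$. The symmetric argument using the upward directedness and Lemma \ref{L3.1}\ref{ii} produces a biggest element $u^* \in \Su$.

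The delicate point — and the step I expect to be the main obstacle — is the verification that the limit $u_*$ is genuinely a lower bound for \emph{all} of $\Su$, not merely for the countable dense family $\{v_n\}$; this is where one must convert the weak/strong $\Wc$-convergence and the density into a \emph{pointwise a.e.} inequality, and this is precisely why Theorem \ref{L2.5} is invoked: the uniform $C^\alpha(\Oc)$ bound on $\Su$ makes the order interval compact in $C(\Oc)$, so that density in $\Wc$ together with the continuous inclusion into $C(\Oc)$ forces $u_* \le v$ pointwise for every $v$ in the closure of $\{v_n\}$, hence for every $v \in \Su$. Once this is in place, the argument is a routine transcription of \cite[Theorem 3.5]{Base}, and I would simply cite that reference for the remaining bookkeeping, noting only the two places where the local PDE tools are replaced by their nonlocal Orlicz counterparts (Lemmas \ref{L3.1}, \ref{L3.4} and Theorem \ref{L2.5}).
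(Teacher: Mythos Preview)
Your proposal is correct and follows essentially the same route as the paper, which simply refers to \cite[Theorem 3.5]{Base} and notes that Theorem \ref{L2.5} supplies the $C^{\alpha}(\Oc)$ regularity; your outline (countable dense subset of the compact set $\Su$, monotone sequence built via Lemma \ref{L3.3}, limit via Lemma \ref{L3.4}, pointwise comparison for arbitrary $u\in\Su$ via the uniform $C^{\alpha}$ bound) is precisely the argument being invoked. The only remark is that the passage from $u_*\le v_k$ for all $k$ to $u_*\le u$ for arbitrary $u\in\Su$ can in fact be done with a.e.\ convergence alone (extract a subsequence of the approximating $v_{n_j}$ converging a.e.), so the $C^{\alpha}$ regularity is a convenience rather than a genuine obstacle, but this does not affect the correctness of your plan.
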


\section{Multiple solutions}
	In this section, we follow the approach of \cite{St} to prove the existence of three non-trivial different solutions of  \eqref{1.1}, two of constant sign and one nodal.

	 Throughout this section, we assume that the N-function $G$ satisfies  \eqref{cG0}.
	
	%\begin{equation}\label{D problem}
%		\left\{
%		\begin{array}{ccll}
%			(-\bigtriangleup_{g})^{s} u = \lambda f(x,u) &~~& in~~\O,\\
%			u = 0&~~&in~~\O^{c}.
%		\end{array}
%		\right.
%	\end{equation}for $\lambda \in \mathbb{R}$. Le
 We let $$F(x, u):=\int_0^{u}f(x, \tau)\,d\tau,$$where$f$ satisfies $(\mathbf{H}_0)$ and the following hypothesis:%\textcolor{blue}{Cambie notaci\'on de las hip\'otesis para que coincida con la notaci\'on de H0}
	\begin{enumerate}
		\item[$(\mathbf{H}_1)$]$f(x, 0)= 0$ for a.e. $x$. Also,   $f$ is continuously differentiable in $u$.
		\item[$(\mathbf{H}_2)$] There are an $N$-function $B$ with $G\ll B \ll G_{\frac{n}{s}}$ and $p^+<p_B^-$, and  constants $c_i> 0$, for $i=1, 2, 3, 4$, with $$c_2>p^{+},\quad  c_3<\frac{1}{p^{+}-1},$$ such that for all $u \in L^{B}(\Omega)$ there holds
		$$c_1\rho_{B,\Omega}(u) \leq c_2\int_\Omega F(x, u)\,dx \leq\int_\Omega f(x, u)u\,dx \leq c_3\int_\Omega f_u(x, u)u^{2}\,dx \leq c_4\rho_{B,\Omega}(u).$$
	\end{enumerate}
	
	\begin{corol}
	Observe that $(\mathbf{H}_1)$ implies that the trivial function  $u \equiv 0$ is a solution of \eqref{1.1}.\end{corol}
\begin{corol}Assume that $f(u)=b(u)$, $b(0)=0$,  where $b=B'$, $B$ an N-function  so that $G \ll B \ll G_{\frac{n}{s}}$,
$$p_B^{-}-1 \leq \dfrac{tb'(t)}{b(t)} \leq p^{+}_B-1,$$and $p^{+} <p_B^-$. Then $f$ satisfies $(\mathbf{H}_0),(\mathbf{H}_1)$ and $(\mathbf{H}_2)$. Indeed, $(\mathbf{H}_0),(\mathbf{H}_1)$ are clearly satisfied and for $(\mathbf{H}_2)$ observe that
\begin{equation*}
\begin{split}
& p_B^-\rho_{B,\Omega}(u)  =p_B^- \int_\Omega B(u(x))\,dx \leq  \int_\Omega b(|u(x)|)|u(x)|\, dx  \\ & \qquad=  \int_\Omega b(u(x))u(x)\, dx  \leq \dfrac{1}{p^{-}_B-1}\int_\Omega b'(|u(x)|)u(x)^{2}\,dx \leq \dfrac{p_B^{+}(p_B^{+}-1)}{p^{-}_B-1}\rho_{B, \Omega}(u).
\end{split}
\end{equation*}Hence, taking $c_1=c_2=p_B^-$, $c_3 = 1/(p^{-}_B-1)$, and $c_4=p_B^{+}(p_B^{+}-1)/(p^{-}_B-1)$ we get $(\mathbf{H}_2)$.

More generally, suppose that $f(x, \cdot)$ is odd, $f(x, u)\geq 0$ for $u \geq 0$, and the following pointwise relations hold:
\begin{equation}\label{cond q in f}
0 < q^{-}-1 \leq \dfrac{u f_u(x, u)}{f(x, u)} \leq q^{+}-1, \quad p^{+}+1 < q^{-} \leq q^{+},
\end{equation}
and
\begin{equation}\label{assumpt pointwise2}
b(u)\leq q^{-}f(x, u),\,\, f_u(x, u) \leq c b'(u),
\end{equation}for some $c >0$ and  all $x$ and $u$. Then $(\mathbf{H}_2)$ holds.  Indeed, we first have by \eqref{assumpt pointwise2},

\begin{equation}\label{example point}
\begin{split}
\left(1-\dfrac{1}{q^{-}} \right)\rho_{B,\Omega}(u)& =\left(1-\dfrac{1}{q^{-}} \right)\int_\Omega \int_0^{|u(x)|} b(t)\,dt \,dx \\&  \leq (q^{-}-1)\int_\Omega F(x, |u(x)|)\,dx=(q^{-}-1)\int_\Omega F(x, u(x))\,dx, 
\end{split}
\end{equation}where we have used that $f$ is odd in the last equality. This proves the first integral inequality in $(\mathbf{H}_2)$ with $c_1= 1-\frac{1}{q^{-}}$ and $c_2=q^{-}-1$. Next,
\begin{equation*}
\begin{split}
(q^{-}-1)\int_\Omega F(x, |u(x)|)\,dx & = (q^{-}-1)\int_\Omega\int_0^{|u(x)|}f(x, t)\,dt\,dx\\ &  \leq \int_\Omega\int_0^{|u(x)|}tf_t(x, t)\,dt\,dx \\& = \int_\Omega \left(tf(x, t)\bigg\vert_0^{|u(x)|}-\int_0^{|u(x)|}f(x, t)\right)\,dx\\ & \leq \int_\Omega |u|f(x, |u|)\,dx \\ & = \int_\Omega uf(x, u)\,dx.
\end{split}
\end{equation*}Finally, observe that by \eqref{cond q in f} and \eqref{assumpt pointwise2},
\begin{equation*}
\int_\Omega uf(x, u)\,dx \leq \dfrac{1}{q^{-}-1}\int_\Omega u^{2}f_u(x, u)\,dx \leq \dfrac{c p_B^{+}(p_B^{+}-1)}{q^{-}-1}\rho_{B, \Omega}(u),
\end{equation*}which ends the proof taking $c_3=1/(q^{-}-1)$ and $c_4=c p_B^{+}(p_B^{+}-1)/(q^{-}-1)$.

%More generally, suppose that $f(x, \cdot)$ is odd, the following pointwise relations hold:
%\begin{equation}\label{assumpt pointwise}
%0 < F(x, u) \leq f(x, u)u
%\end{equation}and that
%\begin{equation}\label{assumpt pointwise2}
%c_2b(u)\leq f(x, u),\,\, f_u(x, u) \leq c_3b'(u),
%\end{equation}for all $x$ and $u$, and that $f_u(x, \cdot)$ is non-decreasing in $[0, \infty)$. Then $(\mathbf{H}_2)$ holds.  Indeed, we first have
%\begin{equation}\label{example point}
%\begin{split}
%c_2 \rho_{B,\Omega}(u)&  \leq \int_\Omega F(x, u)\,dx \qquad(\text{by }\eqref{assumpt pointwise2})\\&  \leq \int_\Omega f(x, u)u\,dx \qquad(\text{by }\eqref{assumpt pointwise})\\ & = \int_\Omega f_u(x, u^*)u^2\,dx 
%\end{split}
%\end{equation}where $u^*(x)$ lies between $u(x)$ and $0$ by mean value theorem. Then, since $f_u(x, \cdot)$ is non-decreasing in $[0, \infty)$ and even (since $f(x, \cdot)$ is odd by \eqref{assumpt pointwise}),
%\begin{equation}\label{example point2}
%\begin{split}
% \int_\Omega f_u(x, u^*)u^2\,dx &=  \int_{\left\lbrace x \in \Omega: u(x) >0\right\rbrace } f_u(x, u^*)u^2\,dx +\int_{\left\lbrace x \in \Omega: u(x) \leq 0 \right\rbrace} f_u(x, u^*)u^2\,dx \\ & \leq  \int_{\left\lbrace x \in \Omega: u(x) >0\right\rbrace } f_u(x, u)u^2\,dx + \int_{\left\lbrace x \in \Omega: u(x) \leq 0 \right\rbrace} f_u(x, -u^*)u^2\,dx \\ & \leq \int_{\Omega} f_u(x, u)u^2\,dx \\& \leq c_3\rho_{B,\Omega}(u) \qquad(\text{by }\eqref{assumpt pointwise2}).
%\end{split}
%\end{equation}Hence, combining \eqref{example point} and \eqref{example point2}, we get that $(\mathbf{H}_2)$ holds.

\end{corol}
	
	Observe that weak solutions of \eqref{1.1} are critical points of the functional \\$\Phi: W_0^{s, G}(\Omega)\to \mathbb{R}$ given by
	$$\Phi(u):= \iint_{\mathbb{R}^{2n}}G\left(\left|\dfrac{u(x)-u(y)}{|x-y|^{s}}\right| \right)\,d\mu - \mathcal{F}(u),$$ where
	$$\mathcal{F}(u):= \int_\Omega F(x, u)\,dx.$$
	Define the following subsets of $W_0^{s, G}(\Omega)$:
	$$M_1 := \left\lbrace u \in W_0^{s, G}(\Omega): \int_{\Omega} u_{+} dx > 0 \text{ and } \left\langle \glap u, u_{+}\right\rangle= \left\langle \mathcal{F}'(u), u_{+}\right\rangle \right\rbrace,$$
	$$M_2 := \left\lbrace u \in W_0^{s, G}(\Omega): \int_{\Omega} u_{-} dx > 0 \text{ and } \left\langle \glap u, u_{-}\right\rangle= \left\langle \mathcal{F}'(u), u_{-}\right\rangle \right\rbrace,$$ where $u_-=\max\{-u,0\}$ and
	$$M_3:=M_1\cap M_2.$$%Here, $u_{+}:=\max\left\lbrace u, 0 \right\rbrace$, $u_{-}:=\max\left\lbrace -u, 0 \right\rbrace$, and $\left\langle \cdot, \cdot \right\rangle$ is the usual duality pairing of $W_0^{s, G}(\mathbb{R}^{n})$.

Finally, we define
$$K_1:= \left\lbrace u \in M_1: u \geq 0 \right\rbrace,$$
$$K_2:= \left\lbrace u \in M_2: u \leq 0 \right\rbrace$$
and
$$K_3:=M_3.$$
	
	We start with the next lemma to show that these sets are not empty. We borrow some calculation from \cite{BMO} and write the proof in details for completeness.
\begin{lema}The sets $M_1$, $M_2$ and $M_3$ are non empty.
\end{lema}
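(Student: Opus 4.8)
The plan is to exhibit, for each of the three manifolds, an explicit element obtained by scaling a fixed test function and invoking the intermediate value theorem on a suitable one-variable function. Since $M_3 = M_1 \cap M_2$, it suffices to produce a single function lying in both $M_1$ and $M_2$; this forces the natural choice of a \emph{sign-changing} test function, whose positive and negative parts are both non-trivial, together with two independent scaling parameters.

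First I would fix $\varphi \in C_c^\infty(\Omega)$ with $\int_\Omega \varphi_+\,dx > 0$ and $\int_\Omega \varphi_-\,dx > 0$ (for $M_1$ alone one can take $\varphi \geq 0$, $\varphi \not\equiv 0$, and scale by a single $t>0$; I will treat $M_1$ and $M_2$ this way first since the argument is shorter, then do $M_3$). For $M_1$, set $h(t):= \langle \glap(t\varphi), (t\varphi)_+\rangle - \langle \mathcal F'(t\varphi), (t\varphi)_+\rangle$ for $t>0$ and show it has a zero. Using the representation formula and homogeneity of the fractional increment one has, via Lemma \ref{norm and modular} and \eqref{G1}, that $\langle \glap(t\varphi),(t\varphi)_+\rangle$ is controlled above and below by constants times $\max\{t^{p^-},t^{p^+}\}$ and $\min\{t^{p^-},t^{p^+}\}$ respectively (times $[\varphi]_{s,G}$-dependent constants); while by $(\mathbf H_2)$, $\langle \mathcal F'(t\varphi),(t\varphi)_+\rangle = \int_\Omega f(x,t\varphi)(t\varphi)_+\,dx \geq c_1\rho_{B,\Omega}(t\varphi_{\text{on }\{\varphi>0\}})$, which grows at least like $t^{p_B^-}$ near infinity and like $t^{p_B^+}$ near zero. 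Since $p^+ < p_B^-$, the nonlinear term dominates as $t\to\infty$, so $h(t)<0$ for $t$ large; and since $\rho_{B}$ vanishes to higher order (as $p_B^+ > p^+ > p^-$ wait — rather, near zero $\rho_B(t\varphi)\sim t^{p_B^+}$ and the leading term $\sim t^{p^-}$ with $p^- < p_B^+$), $h(t) > 0$ for $t$ small. Continuity of $h$ on $(0,\infty)$ then yields $t_0$ with $h(t_0)=0$, so $t_0\varphi \in M_1$; the constraint $\int_\Omega (t_0\varphi)_+\,dx = t_0\int_\Omega\varphi_+\,dx > 0$ holds by the choice of $\varphi$. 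The argument for $M_2$ is identical with $\varphi$ replaced by $-\varphi$ (or $\varphi \leq 0$, $\varphi\not\equiv 0$).

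For $M_3$ I would take $\varphi$ sign-changing as above, write $\psi = t\varphi_+ - \sigma\varphi_-$ with $t,\sigma>0$ two parameters, and consider the map $\Psi:(0,\infty)^2\to\mathbb R^2$,
\begin{equation*}
\Psi(t,\sigma) = \Big( \langle \glap\psi, \psi_+\rangle - \langle\mathcal F'(\psi),\psi_+\rangle,\ \langle\glap\psi,\psi_-\rangle - \langle\mathcal F'(\psi),\psi_-\rangle\Big).
\end{equation*}
Here $\psi_+ = t\varphi_+$ and $\psi_- = \sigma\varphi_-$, and the nonlinear terms decouple because $F(x,\psi) = F(x,t\varphi)$ on $\{\varphi>0\}$ and $F(x,-\sigma\varphi_-)$ on $\{\varphi<0\}$, so $\langle\mathcal F'(\psi),\psi_+\rangle$ depends only on $t$ and $\langle\mathcal F'(\psi),\psi_-\rangle$ only on $\sigma$. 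The nonlocal terms do couple (there is a cross contribution from pairs $(x,y)$ with $x\in\{\varphi>0\}$, $y\in\{\varphi<0\}$), but one still has the two-sided bounds: each component is bounded above by $C(\max\{t^{p^+},\sigma^{p^+}\}\text{-type terms}) - c\,t^{p_B^-}$ (resp. $-c\,\sigma^{p_B^-}$) for the parameter large, and is positive when that parameter is small (uniformly for the other parameter in a compact set). A standard degree-theoretic / Poincaré–Miranda argument on a large box $[\delta,R]^2$ then gives a zero $(t_0,\sigma_0)$, and $\psi_0 = t_0\varphi_+ - \sigma_0\varphi_- \in M_1\cap M_2 = M_3$.

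The main obstacle is the $M_3$ case: handling the coupling between $t$ and $\sigma$ through the nonlocal cross terms in the energy so that one can still guarantee sign changes of each component of $\Psi$ on the boundary of a large square, and then invoking a fixed-point/Miranda-type theorem rather than a one-dimensional IVT. The decoupling of the \emph{nonlinear} part (which is what actually drives the sign changes, because $p^+ < p_B^-$ makes it dominant) is what saves the argument: on the face where $t$ is small the first component is positive regardless of $\sigma$, on the face where $t$ is large it is negative regardless of $\sigma$ (the cross term being of order $\max\{t,\sigma\}^{p^+} \ll t^{p_B^-}$), and symmetrically in $\sigma$; this is exactly the hypothesis of the Poincaré–Miranda theorem. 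The routine estimates — expanding the quadratic-type nonlocal forms, applying Lemma \ref{norm and modular}, \eqref{G1} and $(\mathbf H_2)$ — I would carry out only to the extent of extracting the four needed sign statements on the boundary.
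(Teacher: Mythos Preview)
Your proposal is correct and follows essentially the same overall scheme as the paper: for $M_1$ and $M_2$ you scale a one-signed test function and apply the one-dimensional intermediate value theorem, exactly as the paper does; for $M_3$ you take a sign-changing $\varphi$, introduce the two-parameter map $(t,\sigma)\mapsto t\varphi_+ - \sigma\varphi_-$, and seek a common zero of the two constraint functionals on a box, which is also what the paper does.

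The one genuine methodological difference is in how the boundary sign conditions for the $M_3$ argument are obtained. The paper exploits a \emph{monotonicity} property: it computes explicitly that $\varphi_1(t,s)$ is non-decreasing in $s$ (and $\varphi_2$ non-increasing in $t$), using only that $g$ is increasing and the decomposition of $\mathbb{R}^{2n}$ into the four products of $\{w\geq 0\}$ and $\{w<0\}$; this lets it transfer the diagonal sign information $\varphi_i(r,r)>0$, $\varphi_i(R,R)<0$ to the whole boundary of the square $[r,R]^2$ without any quantitative growth estimates. You instead argue by direct size comparison, observing that the cross terms in $\langle(-\Delta_g)^s\psi,\psi_+\rangle$ are non-negative (so they help on the small-$t$ face) and are bounded by $C\max\{t,\sigma\}^{p^+}$ (so they are dominated by the nonlinear term $\gtrsim t^{p_B^-}$ on the large-$t$ face). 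Both routes are valid and both end with a Poincar\'e--Miranda-type conclusion; the paper's monotonicity observation is slightly cleaner because it avoids having to track the cross terms quantitatively, while your approach is more self-contained in that it names the topological tool explicitly and does not require the structural computation on $D_1\times D_2$.
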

\begin{proof}
Let
$$
\varphi_1(w)=\left\langle \glap w, w\right\rangle -\int_{\Omega}f(x,w)w\, dx.
$$
Take any $w_0 \in W_0^{s, G}(\Omega)_{+}$, with ${w_0}_{+} \neq 0$ in a set of positive measure. Observe that if $0<t<1$, by $(\mathbf{H}_2)$,
\begin{align*}
\varphi_1(tw_0)&=\left\langle \glap tw_0, tw_0\right\rangle-\int_{\Omega}f(x,tw_0)tw_0\, dx\\
&\geq \iint_{\mathbb{R}^{2n}} g\left(\dfrac{tw_0(x)-tw_0(y)}{|x-y|^{s}} \right)\dfrac{tw_0(x)-tw_0(y)}{|x-y|^{s}} \,d\mu -c_4\rho_{B,\Omega}(tw_0)\\& \geq
t^{p^+}\iint_{\mathbb{R}^{2n}} g\left(\dfrac{w_0(x)-w_0(y)}{|x-y|^{s}} \right)\dfrac{w_0(x)-w_0(y)}{|x-y|^{s}} \,d\mu -c_4\rho_{B,\Omega}(tw_0)\\
&\geq t^{p^+}\left\langle \glap w_0, w_0\right\rangle-c_4t^{p_B^-}\rho_{B,\Omega}(w_0)\\
&\geq t^{p^+}A_1-c_4t^{p_B^-} A_2,
\end{align*}
where $A_1=\left\langle \glap w_0, w_0\right\rangle \geq C \rho_{s, G}(w_0) \neq 0$ and $A_2=\rho_{B,\Omega}(w_0)$. As $p^+<p_B^-$ it follows that $\varphi_1(tw_0)\geq0$ for $t$ small enough. On the other hand, if $t\geq1$, analogously
$$
\varphi_1(tw_0)\leq t^{p^-}A_1-c_1t^{p_B^+}A_2.
$$
As $p^-<p^+<p_B^-<p_B^+$ it follows that $\varphi_1(tw_0)\leq0$ for $t$ big enough. Finally, by Bolzano's theorem there exists $t_0>0$ such that $t_0w_0\in M_1$. Analogously if $w_0 \leq 0$ with $w_0 < 0$ in a set of positive measure,  there exists  $t_1>0$ such that $t_1w_0\in M_2$.
Finally, we prove that $M_3 \neq \emptyset$. Let $w \in W_0^{s, G}(\Omega)$, with $w_{+}, w_{-}\neq 0$. For $s, t >0$, define
$$\varphi(t, s):=(\varphi_1(t, s), \varphi_2(t, s)),$$ where
$$\varphi_1(t, s):= \left\langle \glap (tw_{+}-sw_{-}), tw_{+}\right\rangle- \int_\Omega f(x,tw_{+}-sw_{-})tw_{+} \,dx$$and
$$\varphi_2(t, s):= \left\langle \glap (tw_{+}-sw_{-}), s(-w_{-})\right\rangle- \int_\Omega f(x,tw_{+}-sw_{-})s(-w_{-}) \,dx.$$Then, for $t \in [0, 1]$,
\begin{equation}
\begin{split}
\varphi_1(t, t) &= \iint_{\mathbb{R}^{2n}} g\left(\dfrac{(tw_+-tw_-)(x)-(tw_+-tw_-)(y)}{|x-y|^{s}} \right)\dfrac{tw_+(x)-tw_+(y)}{|x-y|^{s}} \,d\mu\\ & - \int_\Omega f(x,tw_{+}-tw_{-})tw_{+} \,dx \\ & \geq t^{p^{+}} \iint_{\mathbb{R}^{2n}} g\left(\dfrac{(w_+-w_-)(x)-(w_+-w_-)(y)}{|x-y|^{s}} \right)\dfrac{w_+(x)-w_+(y)}{|x-y|^{s}} \,d\mu\\ & - \int_\Omega f(x,tw_{+})tw_{+} \,dx  \\& \geq t^{p^{+}}A_1 -c_4t^{p_B^{-}}A_2,
\end{split}
\end{equation}where $A_1  \geq \rho_{s, G}(w_+)  \neq 0$ (by \ref{A2}) and $A_2 = \rho_{B, \Omega}(w_{+})$. Thus, for $t$ small we get $\varphi_1(t, t) >0$. Similarly, it holds $\varphi_2(t, t)> 0$. On the other hand, it also holds that for $t$ large that $\varphi_1(t, t) <0$ and $\varphi_2(t, t)< 0$. Next, we show that $\varphi_1(t, s)$ is increasing in $s$ for fixed $t$. Let $s_1 \leq s_2$ and consider as in \cite{BMO} the sets
$$D_1=\left\lbrace x \in \mathbb{R}^{n}: w(x)\geq 0\right\rbrace, \quad D_2:=D_1^{c}.$$Then,
\begin{equation*}
\begin{split}
&\varphi_1(t, s_2)-\varphi_1(t, s_1)  \\ & \quad = \left\langle \glap (tw_{+}-s_2w_{-}), tw_{+}\right\rangle- \int_\Omega f(x,tw_{+}-s_2w_{-})tw_{+} \,dx \\ &  \quad-  \left\langle \glap (tw_{+}-s_1w_{-}), tw_{+}\right\rangle+ \int_\Omega f(x,tw_{+}-s_1w_{-})tw_{+} \,dx \\ &  \qquad = \left\langle \glap (tw_{+}-s_2w_{-}), tw_{+}\right\rangle - \left\langle \glap (tw_{+}-s_1w_{-}), tw_{+}\right\rangle \\ & \quad = \int_{D_2}\int_{D_1}g\left(\dfrac{tw_+(x)+s_2w_-(y)}{|x-y|^{s}} \right) \dfrac{tw_+(x)}{|x-y|^{s}}\,d\mu   \\ &  \quad +  \int_{D_1}\int_{D_2}g\left(\dfrac{-s_2w_-(x)-tw_+(y)}{|x-y|^{s}} \right) \dfrac{-tw_+(y)}{|x-y|^{s}}\,d\mu  \\ &  \quad  -\int_{D_2}\int_{D_1}g\left(\dfrac{tw_+(x)+s_1w_-(y)}{|x-y|^{s}} \right) \dfrac{tw_+(x)}{|x-y|^{s}}\,d\mu \\ & \quad - \int_{D_1}\int_{D_2}g\left(\dfrac{-s_1w_-(x)-tw_+(y)}{|x-y|^{s}} \right) \dfrac{-tw_+(y)}{|x-y|^{s}}\,d\mu \\ & \quad =  \int_{D_2}\int_{D_1}\left[g\left(\dfrac{tw_+(x)+s_2w_-(y)}{|x-y|^{s}} \right)-g\left(\dfrac{tw_+(x)+s_1w_-(y)}{|x-y|^{s}} \right) \right]\dfrac{tw_+(x)}{|x-y|^{s}}\,d\mu \\ & \quad +
 \int_{D_1}\int_{D_2}\left[g\left(\dfrac{-s_2w_-(x)-tw_+(y)}{|x-y|^{s}} \right)-g\left(\dfrac{-s_1w_-(x)-tw_+(y)}{|x-y|^{s}} \right) \right]\dfrac{-tw_+(y)}{|x-y|^{s}}\,d\mu.
\end{split}\end{equation*}Hence, since $g$ is increasing, we get
$$\varphi_1(t, s_2)-\varphi_1(t, s_1)  \geq 0.$$Similarly, $$\varphi_2(t_1, s)-\varphi_2(t_2, s)  \geq 0,$$for $t_1 \leq t_2$ and fixed $s$. Thus, there are $r, R >0$, $r < R$ such that
$$\varphi_1(r, s) >0, \, \varphi_1(R, s)<0 \quad \text{ for all } s \in (r, R],$$
$$\varphi_2(t, r) >0, \, \varphi_2(t, R)<0 \quad \text{ for all } t \in (r, R],$$and consequently, there exists $t, s \in [r, R]$ such that $\varphi_1(t, s)=\varphi_2(t, s)=0$. This shows that $tw_+-sw_- \in M_3$.
\end{proof}

%
%there exist $t_0$ and $t_1$ such that, $t_0w_0\in M_1$ and $t_1w_1\in M_2$, then we will construct
%$w_2=t_0w_0+t_1w_1$ and we will show that $w_2\in M_3$. In fact, it is easy to check that $(w_2)_+=(t_0w_0)_+$. So, as $t_0w_0\in M_1$,
%$$
%\int_\Omega(w_2)_+ \,dx=\int_\Omega(t_0w_0)_+\,dx>0
%$$
%and as $f(x,t_0w_0)(t_0w_0)_+=f(x,w_2)(w_2)_+$,
%$$
%\rho_{s,G}((w_2)_+)=\rho_{s,G}((t_0w_0)_+)=\int_\Omega f(x,t_0w_0)(t_0w_0)_+\, dx=\int_\Omega f(x,w_2)(w_2)_+\,dx,
%$$
%then $w_2\in M_1$, in the same way we can prove that $w_2\in M_2$, so $w_2\in M_3$.
%
	\begin{lema}\label{Lema1PlanC}
		There exist constants $k_j > 0$, $j=1, 2, 3$, such that for every $u \in K_i$, $i=1, 2, 3$, there holds
		\begin{equation}\label{mult 1}
			\rho_{s, G}(u) \leq k_1\int_\Omega f(x, u)u\,dx \leq k_2\Phi(u) \leq k_3 \rho_{s, G}(u).
		\end{equation}
	\end{lema}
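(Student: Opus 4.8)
The plan is to reduce the membership conditions defining $K_1,K_2,K_3$ to a single scalar identity, and then chain the inequalities in \eqref{mult 1} using the two–sided comparison between $\langle\glap u,u\rangle$ and $\rho_{s,G}(u)$ provided by \eqref{G1}, together with the structural bounds in $(\mathbf{H}_2)$. The first step is to check that on each $K_i$ one has
\[
\langle \glap u, u\rangle = \int_\Omega f(x,u)u\,dx .
\]
Indeed, if $u\in K_1$ then $u\geq 0$, hence $u_{+}=u$ and the relation defining $M_1$ is precisely this identity; if $u\in K_2$ then $u\leq 0$, hence $u_{-}=-u$ and the relation defining $M_2$ becomes the same identity after multiplying by $-1$; finally, if $u\in K_3=M_1\cap M_2$, subtracting the $M_1$ and $M_2$ relations and using the linearity of $v\mapsto\langle\glap u,v\rangle$ and $u=u_{+}-u_{-}$ yields it once more. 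Here I use that $\langle\mathcal{F}'(u),v\rangle=\int_\Omega f(x,u)v\,dx$.

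Next, from the representation formula for $\glap$ and \eqref{G1} (recalling that $g$ is odd, so that the integrand $tg(t)$ equals $|t|g(|t|)$), integration against $d\mu$ gives
\[
p^{-}\rho_{s,G}(u)\leq \langle\glap u,u\rangle\leq p^{+}\rho_{s,G}(u).
\]
Combining the left-hand bound with the identity yields $\rho_{s,G}(u)\leq \frac{1}{p^{-}}\int_\Omega f(x,u)u\,dx$, which is the first inequality of \eqref{mult 1} and also shows $\int_\Omega f(x,u)u\,dx\geq 0$. For the middle inequality, recall that $\Phi(u)=\rho_{s,G}(u)-\int_\Omega F(x,u)\,dx$. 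By $(\mathbf{H}_2)$ we have $0\leq \int_\Omega F(x,u)\,dx\leq \frac{1}{c_2}\int_\Omega f(x,u)u\,dx$, while the right-hand bound above combined with the identity gives $\rho_{s,G}(u)\geq \frac{1}{p^{+}}\int_\Omega f(x,u)u\,dx$; hence
\[
\Phi(u)\geq \Big(\frac{1}{p^{+}}-\frac{1}{c_2}\Big)\int_\Omega f(x,u)u\,dx ,
\]
and the coefficient is strictly positive because $(\mathbf{H}_2)$ forces $c_2>p^{+}$. Finally, $\int_\Omega F(x,u)\,dx\geq 0$ gives $\Phi(u)\leq \rho_{s,G}(u)$. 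Taking $k_1=1/p^{-}$, $k_2=k_1\big(\frac{1}{p^{+}}-\frac{1}{c_2}\big)^{-1}$ and $k_3=k_2$ makes all three inequalities of \eqref{mult 1} hold on every $K_i$.

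There is no essential difficulty in this argument; the points requiring care are the sign and linearity bookkeeping that produces the common identity on $K_2$ and $K_3$, and the observation that the middle inequality is non-vacuous precisely because of the hypothesis $c_2>p^{+}$ in $(\mathbf{H}_2)$. It is also worth noting that $u\equiv 0\notin K_i$, since membership requires $\int_\Omega u_{+}\,dx>0$ or $\int_\Omega u_{-}\,dx>0$, so all quantities appearing in \eqref{mult 1} are in fact strictly positive.
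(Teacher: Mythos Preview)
Your proof is correct and follows essentially the same approach as the paper: both establish the identity $\langle\glap u,u\rangle=\int_\Omega f(x,u)u\,dx$ on each $K_i$, then combine the two-sided bound $p^{-}\rho_{s,G}(u)\leq\langle\glap u,u\rangle\leq p^{+}\rho_{s,G}(u)$ with the structural inequality in $(\mathbf{H}_2)$ to chain \eqref{mult 1}. The only cosmetic difference is your choice $k_1=1/p^{-}$ versus the paper's $k_1=1$ (the paper absorbs the factor $p^{-}>1$ directly), and you spell out the $K_2$ and $K_3$ cases more explicitly than the paper does.
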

	\begin{proof}Let $u \in K_i$, for some $i=1, 2, 3.$ Then, by definition of the sets $K_i$,
	$$\left\langle \glap u, u\right\rangle = \int_\Omega f(x, u)u\,dx.$$Hence,
	$$\rho_{s, G}(u) \leq p^{-}\rho_{s, G}(u) \leq \left\langle \glap u, u\right\rangle = \int_\Omega f(x, u)u\,dx.$$ Moreover,   by $(\mathbf{H}_2)$, it follows that $\mathcal{F}(u)$ is non-negative. Then
		$$\Phi(u) = \rho_{s, G}(u) -\mathcal{F}(u) \leq \rho_{s, G}(u).$$ 	
%		Also, observe that for all $x, y \in \Rn$,
%		\textcolor{orange}{$$|u(x)-u(y)| \geq |u_{+}(x)-u_{+}(y)| \quad \text{ and }\quad |u(x)-u(y)| \geq |u_{-}(x)-u_{-}(y)|.$$Then
%		$$\rho_{s, G}(u) \geq \frac{1}{2}\left(\rho_{s, G}(u_{+})+ \rho_{s, G}(u_{-}) \right).$$}	
		Consequently, by the definitions of the sets $K_i$ and  by $(\mathbf{H}_2)$, we have
		\begin{equation*}
			\begin{split}\Phi(u) & = \rho_{s, G}(u) -\mathcal{F}(u)  \\ & \geq \frac{1}{p^{+}}\left\langle \glap u, u\right\rangle -\int_\Omega F(x, u)\,dx \\ & \geq   \frac{1}{p^{+}}\int_\Omega f(x, u)u\,dx - \frac{1}{c_2} \int_\Omega f(x, u)u\,dx \\ & = \left( \frac{1}{p^{+}}-\frac{1}{c_2}\right) \int_\Omega f(x, u)u\,dx.
			\end{split}
		\end{equation*}Therefore, combining the above, we get \eqref{mult 1} choosing
		$$k_1=1,\, k_2=k_3=\left[\left( \frac{1}{p^{+}}-\frac{1}{c_2}\right) \right]^{-1}.$$
	\end{proof}
	%\textcolor{magenta}{Necesito $\left(\frac{1}{2}-\frac{c_3}{c_2}\right)>0$ osea $c_3<\frac{c_2}{2}$}
	\begin{lema}\label{Lema2PlanC} There exists a constant
		$ D > 0 $ such that $ [u]_{s,G}^{p^{+}} \geq D$, for all  $ u \in K_1$,
		$[u]_{s,G}^{p^{+}} \geq D$ for all $  u \in K_2$, and $[u_-]_{s,G}^{p^{+}}\, ,
		\,[u_+]_{s,G}^{p^{+}} \geq D$ for all $ u \in K_3$.
	\end{lema}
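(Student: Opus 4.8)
The plan is to reduce everything to a single uniform bound: there is $\delta>0$, independent of $u$, such that $[u_{+}]_{s,G}\geq\delta$ for all $u\in M_1$ and $[u_{-}]_{s,G}\geq\delta$ for all $u\in M_2$. Granting this, the three assertions follow with $D:=\delta^{p^{+}}$, because $u=u_{+}$ on $K_1$ and $u=-u_{-}$ on $K_2$ (so that $[u]_{s,G}=[u_{+}]_{s,G}$, resp. $[u]_{s,G}=[u_{-}]_{s,G}$, there), while $K_3=M_1\cap M_2$, so both bounds apply simultaneously on $K_3$.

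The first step is the modular inequality $\rho_{s,G}(u_{+})\leq c_4\,\rho_{B,\Omega}(u_{+})$ for $u\in M_1$, and symmetrically $\rho_{s,G}(u_{-})\leq c_4\,\rho_{B,\Omega}(u_{-})$ for $u\in M_2$. Using \eqref{A2} with $a=u(x)/|x-y|^{s}$, $b=u(y)/|x-y|^{s}$ and integrating against $d\mu$ gives $\rho_{s,G}(u_{+})\leq\langle\glap u,u_{+}\rangle$; by the defining identity of $M_1$ the right-hand side equals $\int_{\Omega}f(x,u)u_{+}\,dx=\int_{\Omega}f(x,u_{+})u_{+}\,dx$ (since $u_{+}=u$ where $u>0$ and $u_{+}=0$ elsewhere), and the last integral is $\leq c_4\,\rho_{B,\Omega}(u_{+})$ by the final inequality in $(\mathbf{H}_2)$ applied to $w=u_{+}$. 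For $u\in M_2$ one applies \eqref{A2} instead with $a=-u(x)/|x-y|^{s}$, $b=-u(y)/|x-y|^{s}$; since $g$ is odd this yields $\rho_{s,G}(u_{-})\leq-\langle\glap u,u_{-}\rangle$, and the $M_2$-identity together with $(\mathbf{H}_2)$ applied to $w=-u_{-}$ gives $-\langle\glap u,u_{-}\rangle=\int_{\Omega}f(x,-u_{-})(-u_{-})\,dx\leq c_4\,\rho_{B,\Omega}(u_{-})$.

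The second step passes from modulars to seminorms and closes the estimate. Note that $u_{\pm}\in\Wc$, and that the conditions $\int_{\Omega}u_{\pm}\,dx>0$ in the definitions of $M_1$, $M_2$ force $u_{\pm}\not\equiv0$, so $t:=[u_{\pm}]_{s,G}>0$. By Lemma~\ref{norm and modular}(2), $\xi^{-}(t)\leq\rho_{s,G}(u_{\pm})$; by Lemma~\ref{norm and modular}(1) for the $N$-function $B$, $\rho_{B,\Omega}(u_{\pm})\leq\max\{\|u_{\pm}\|_{B,\Omega}^{p_{B}^{-}},\|u_{\pm}\|_{B,\Omega}^{p_{B}^{+}}\}$; and by the continuous embedding $\Wc\hookrightarrow L^{B}(\Omega)$ of Theorem~\ref{embedding} together with the Poincaré inequality, $\|u_{\pm}\|_{B,\Omega}\leq C_0\,t$ with $C_0$ independent of $u$. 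Combining these with the first step,
$$\xi^{-}(t)\leq c_4\max\{(C_0t)^{p_{B}^{-}},(C_0t)^{p_{B}^{+}}\}.$$
Now fix $\rho_0\in(0,1]$ with $C_0\rho_0\leq1$: if $t\leq\rho_0$ this reads $t^{p^{+}}\leq c_4C_0^{p_{B}^{-}}t^{p_{B}^{-}}$, and since $p^{+}<p_{B}^{-}$ (hypothesis $(\mathbf{H}_2)$) we obtain $t\geq(c_4C_0^{p_{B}^{-}})^{-1/(p_{B}^{-}-p^{+})}$. Hence in all cases $[u_{\pm}]_{s,G}\geq\delta:=\min\{\rho_0,(c_4C_0^{p_{B}^{-}})^{-1/(p_{B}^{-}-p^{+})}\}>0$, as needed.

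The argument carries no essential difficulty; the points that require care are the sign bookkeeping in the $u_{-}$ estimate, where the oddness of $g$ is exactly what produces the correct inequality, and making the low-energy dichotomy precise — and both the dichotomy and hence the whole conclusion rest on the strict gap $p^{+}<p_{B}^{-}$ between the growth exponents of $G$ and $B$.
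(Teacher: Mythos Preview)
Your proof is correct and follows essentially the same route as the paper: bound $\rho_{s,G}(u_{\pm})$ by $\langle\glap u,u_{\pm}\rangle$ via \eqref{A2}, use the $M_i$-identity and $(\mathbf{H}_2)$ to pass to $c_4\,\rho_{B,\Omega}(u_{\pm})$, and then close with the modular--seminorm relations, the Sobolev embedding, and the strict gap $p^{+}<p_B^{-}$. You add more detail (the explicit sign bookkeeping for $u_{-}$, the appeal to Poincar\'e, and the quantified threshold $\rho_0$), but the argument is the paper's.
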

	\begin{proof}
		Let $u \in K_i$. Suppose, without loss of generality, that $[u_\pm]_{s,G}<1$.
		By definition of $K_i$ and hypothesis $(\mathbf{H}_2)$ we have
		\begin{equation*}
			[u_\pm]^{p+}_{s,G}\leq\rho_{s,G}(u_\pm)\,\leq k_1 \int_\Omega f(x,u)u_\pm \,dx  = k_1 \int_\Omega f(x,u_{\pm})u_\pm \,dx  \leq c \rho_{B,\Omega}(u_\pm).
		\end{equation*}
		Then using Sobolev inequality in Orlicz fractional spaces
		\begin{align*}
			[u_\pm]^{p+}_{s,G}&\leq c\max\{\|u_\pm\|_{B,\Omega}^{p_B^+},\|u_\pm\|_{B,\Omega}^{p_B^-}\}\\
			&\leq c\max\{[u_\pm]_{s,G}^{p_B^+},[u_\pm]_{s,G}^{p_B^-}\}=c[u_\pm]_{s,G}^{p_B^-}.
		\end{align*}
		As $p^+<p_B^-$, the proof is complete.\end{proof}

	\begin{lema}\label{Lema3PlanC}
		There exists $c>0$, $\delta>0$, $\Phi(u)\geq c[u]_{s,G}^{p^+}$ for every $u\in \Wc$ such that $[u]_{s,G}\leq \delta.$
	\end{lema}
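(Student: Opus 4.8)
The plan is to split $\Phi(u)=\rho_{s,G}(u)-\mathcal{F}(u)$, with $\mathcal{F}(u)=\int_\Omega F(x,u)\,dx$, and to estimate the two pieces separately in the regime $[u]_{s,G}\le 1$. For the first piece, Lemma~\ref{norm and modular}(2) gives $\rho_{s,G}(u)\ge \xi^-([u]_{s,G})$, and since $p^-\le p^+$ one has $\xi^-(t)=t^{p^+}$ for $0\le t\le 1$, so $\rho_{s,G}(u)\ge [u]_{s,G}^{p^+}$.

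For the second piece I would use hypothesis $(\mathbf{H}_2)$: the inequality $c_2\int_\Omega F(x,u)\,dx\le c_4\,\rho_{B,\Omega}(u)$ yields $\mathcal{F}(u)\le (c_4/c_2)\,\rho_{B,\Omega}(u)$. Then, exactly as in the proof of Lemma~\ref{Lema2PlanC}, I would combine Lemma~\ref{norm and modular}(1) applied to the $N$-function $B$ with the continuous embedding $\Wc\hookrightarrow L^{B}(\Omega)$ (available because $G\ll B\ll G_{\frac{n}{s}}$, Theorem~\ref{embedding}) together with the fact that, on $\Wc$, the full norm $\|\cdot\|_{s,G}$ is comparable to the seminorm $[\cdot]_{s,G}$; this produces a constant $C>0$ such that $\rho_{B,\Omega}(u)\le C\,[u]_{s,G}^{p_B^-}$ whenever $[u]_{s,G}$ is small. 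Hence $\mathcal{F}(u)\le C'[u]_{s,G}^{p_B^-}$ for some $C'>0$ and all such $u$ (note that the nonnegativity of $\mathcal{F}$, which also follows from $(\mathbf{H}_2)$, is not needed here).

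Putting the two bounds together, for $[u]_{s,G}$ small one obtains
$$\Phi(u)\ \ge\ [u]_{s,G}^{p^+}-C'[u]_{s,G}^{p_B^-}\ =\ [u]_{s,G}^{p^+}\Bigl(1-C'[u]_{s,G}^{\,p_B^--p^+}\Bigr).$$
Since $(\mathbf{H}_2)$ guarantees $p^+<p_B^-$, the exponent $p_B^--p^+$ is strictly positive, so it suffices to take $\delta>0$ small enough that $\delta\le 1$, that $\delta$ is below whatever threshold was needed in the embedding estimate above, and that $C'\delta^{\,p_B^--p^+}\le \tfrac12$; then $\Phi(u)\ge \tfrac12[u]_{s,G}^{p^+}$ for every $u\in\Wc$ with $[u]_{s,G}\le\delta$, which is the claim with $c=\tfrac12$.

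I do not expect a serious obstacle: the whole argument is the standard ``coercivity near the origin'' estimate, and every ingredient --- Lemma~\ref{norm and modular}, the Sobolev embedding of Theorem~\ref{embedding}, and hypothesis $(\mathbf{H}_2)$ --- is already available. The only points requiring a little care are (i) staying in the range $[u]_{s,G}\le 1$ so that the $\min$/$\max$ defining $\xi^{\pm}$ select the correct powers, and (ii) invoking the Poincaré-type comparison $\|u\|_{s,G}\asymp[u]_{s,G}$ on $\Wc$, which was already used implicitly in Lemma~\ref{Lema2PlanC} and can simply be quoted.
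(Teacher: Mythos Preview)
Your proposal is correct and follows essentially the same approach as the paper's proof: both split $\Phi(u)=\rho_{s,G}(u)-\mathcal{F}(u)$, bound $\rho_{s,G}(u)\ge [u]_{s,G}^{p^+}$ via Lemma~\ref{norm and modular} in the regime $[u]_{s,G}\le 1$, control $\mathcal{F}(u)$ by $\rho_{B,\Omega}(u)$ through $(\mathbf{H}_2)$ and then by $[u]_{s,G}^{p_B^-}$ via the Sobolev embedding, and conclude using $p^+<p_B^-$. Your write-up is in fact more careful than the paper's (which is quite terse) in making explicit the Poincar\'e comparison $\|u\|_{s,G}\asymp [u]_{s,G}$ on $\Wc$ and the choice of $\delta$.
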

	\begin{proof}
		Suppose that  $[u]_{s,G}\leq 1$, then by Sobolev embedding
		\begin{align*}
			\Phi(u)&=\rho_{s,G}(u)-\mathfrak{F}(u)\geq \rho_{s,G}(u)-c \rho_{B,\Omega}(u)\\
			&\geq \rho_{s,G}(u)-c\max\{\|u\|_{B,\Omega}^{p_B^+},\|u\|_{B,\Omega}^{p_B^-}\}\\
		&\geq [u]_{s,G}^{p^+}-c[u]_{s,G}^{p_B^-}\geq c[u]_{s,G}^{p^+}.\\
		\end{align*}
		If $[u]_{s,G}$ is small, as  $p^+<p_B^-$, the proof is complete.
	\end{proof}

	\begin{lema}\label{Lema4PlanC}
		$M_{i}$ is a $C^{1,1}$ submanifold of $\Wc$ of co-dimension 1 for $i=1,2$, and co-dimension  2 for i=3. 
	\end{lema}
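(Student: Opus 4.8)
The plan is to exhibit each $M_i$ as a regular level set of a $C^{1,1}$ map, so that the submanifold structure and codimension follow from the implicit function theorem in Banach spaces. First I would introduce the functional $\psi_1(u):=\left\langle \glap u, u_+\right\rangle-\left\langle \mathcal{F}'(u), u_+\right\rangle$ on the open set $U_1:=\{u\in\Wc:\int_\Omega u_+\,dx>0\}$, and similarly $\psi_2$ with $u_-$ in place of $u_+$, on $U_2:=\{u:\int_\Omega u_-\,dx>0\}$. Then $M_1=\psi_1^{-1}(0)\cap U_1$, $M_2=\psi_2^{-1}(0)\cap U_2$, and $M_3=(\psi_1,\psi_2)^{-1}(0,0)\cap(U_1\cap U_2)$. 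The first task is to check that $\psi_1,\psi_2$ are $C^{1,1}$ (i.e.\ $C^1$ with locally Lipschitz derivative) on the relevant open sets. Continuity and differentiability of $u\mapsto \langle\glap u, u_\pm\rangle$ use the representation formula from \cite{BoS} together with Lemma \ref{ineq g derivada} and the fact that $g'$ is non-decreasing; the term $\langle\mathcal{F}'(u),u_\pm\rangle=\int_\Omega f(x,u)u_\pm\,dx$ is handled via $(\mathbf{H}_1)$ (so $f$ is $C^1$ in $u$) and the growth bound from $(\mathbf{H}_2)$, using the compact embedding $\Wc\hookrightarrow L^B(\Omega)$ of Theorem \ref{embedding} since $B\ll G_{n/s}$.

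Second, and this is the heart of the argument, I would show that $0$ is a regular value, i.e.\ that at every $u\in M_1$ the derivative $\psi_1'(u):\Wc\to\R$ is surjective (equivalently nonzero), and correspondingly that $(\psi_1'(u),\psi_2'(u))$ is surjective onto $\R^2$ at every $u\in M_3$. The natural test direction is $u_+$ itself (resp.\ $-u_-$): one computes $\langle\psi_1'(u),u_+\rangle$ and shows it is strictly negative. The point is that differentiating the ``Nehari-type'' constraint in the radial direction picks up the difference between the homogeneity of the $g$-Laplacian term (controlled above and below by $p^-,p^+$ via \eqref{G1}) and that of the source term (controlled by $c_2>p^+$ and by the $f_u$-bound $c_3<1/(p^+-1)$ in $(\mathbf{H}_2)$). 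Concretely, using $\langle\glap u,u_+\rangle=\int_\Omega f(x,u)u_+\,dx$ on $M_1$ together with Lemma \ref{ineq g derivada} to bound the derivative of the left side by $(p^+-1)\langle\glap u,u_+\rangle$ and the hypothesis $c_3\int_\Omega f_u(x,u)u_+^2\,dx\ge\int_\Omega f(x,u)u_+\,dx$ to bound the derivative of the right side from below, one gets
\begin{equation*}
\langle\psi_1'(u),u_+\rangle\le (p^+-1)\langle\glap u,u_+\rangle-\frac{1}{c_3}\int_\Omega f(x,u)u_+\,dx=\Bigl(p^+-1-\frac{1}{c_3}\Bigr)\int_\Omega f(x,u)u_+\,dx<0,
\end{equation*}
where the last inequality uses $c_3<1/(p^+-1)$ and the fact that $\int_\Omega f(x,u)u_+\,dx\ge c_1\rho_{B,\Omega}(u_+)>0$ because $u_+\ne0$ on a set of positive measure (a consequence of Lemma \ref{Lema2PlanC}, or directly of $\int_\Omega u_+\,dx>0$). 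This shows $\psi_1'(u)\ne0$, hence surjective onto $\R$, giving codimension $1$ for $M_1$; the same computation applies to $M_2$ with $-u_-$.

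Third, for $M_3$ I would check that the $2\times2$ matrix of partial derivatives, obtained by pairing $\psi_j'(u)$ with $u_+$ and with $-u_-$, is invertible. The diagonal entries $\langle\psi_1'(u),u_+\rangle$ and $\langle\psi_2'(u),-u_-\rangle$ are strictly negative by the previous step, while the off-diagonal entries $\langle\psi_1'(u),-u_-\rangle$ and $\langle\psi_2'(u),u_+\rangle$ come only from the nonlocal interaction across the sets $D_1=\{w\ge0\}$ and $D_2=D_1^c$ (the source term contributes nothing since $u_+u_-=0$), and by the monotonicity computation already carried out in the previous lemma (the one showing $\varphi_1$ is increasing in $s$) these off-diagonal terms have a definite sign that makes the matrix diagonally dominant, hence non-singular. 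Once regularity of the value $0$ is established in all three cases, the implicit function theorem for $C^{1,1}$ maps between Banach spaces gives that $M_i$ is a $C^{1,1}$ embedded submanifold of $\Wc$ of the stated codimension, and since $U_1,U_2$ are open the intersection with them does not affect the manifold structure. The main obstacle I anticipate is the $C^{1,1}$ regularity (rather than mere $C^1$) of the nonlocal term $u\mapsto\langle\glap u,u_\pm\rangle$: one must show the Fréchet derivative is locally Lipschitz, which requires a careful estimate of $g'(\xi)-g'(\eta)$ exploiting monotonicity of $g'$ and the growth condition \eqref{cG0}, uniformly on bounded sets of $\Wc$, together with the elementary algebraic identity relating $(a_\pm-b_\pm)$ to $(a-b)$ used in Lemmas \ref{ineq g derivada} and \ref{desigualdad}.
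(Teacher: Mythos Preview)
Your approach is essentially the same as the paper's: both exhibit $M_i$ as the zero set of a $C^{1,1}$ map $\varphi_i$ on the open set $\overline{M}_i=\{u:\int_\Omega u_\pm>0\}$ and verify regularity of the value $0$ by testing $\varphi_1'(u)$ against $u_+$, using Lemma~\ref{ineq g derivada} and the constraint to arrive at the bound $\bigl(p^+-1-\tfrac{1}{c_3}\bigr)\int_\Omega f(x,u)u_+\,dx<0$. One minor bookkeeping point: the full derivative of $\langle\glap u,u_+\rangle$ in the direction $u_+$ picks up both a $g'$ term (bounded by $(p^+-1)\langle\glap u,u_+\rangle$ via Lemma~\ref{ineq g derivada}) and a $g$ term equal to $\langle\glap u,u_+\rangle$ from the product rule, so the bound is $p^+\langle\glap u,u_+\rangle$, not $(p^+-1)\langle\glap u,u_+\rangle$; your displayed final inequality is nonetheless correct once the constraint is substituted.

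The one substantive difference is in the treatment of $M_3$. The paper tests both $\varphi_1'(u)$ and $\varphi_2'(u)$ against the single direction $u=u_+-u_-$ and shows both pairings are strictly negative; this is quicker but, as written, only shows each functional is nonzero rather than that the pair is surjective onto $\R^2$. Your proposal to examine the $2\times2$ matrix $\bigl[\langle\psi_i'(u),v_j\rangle\bigr]$ with $v_1=u_+$, $v_2=-u_-$ is the standard and more rigorous route: the diagonal entries are strictly negative by the $M_1$/$M_2$ computation, the off-diagonal entries involve only the cross integrals over $D_1\times D_2$ (the source term drops out since $u_+u_-=0$), and the sign of $g'$ together with the sign pattern of $(u_+(x)-u_+(y))(u_-(y)-u_-(x))$ on the cross sets makes the matrix invertible. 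This is exactly what is needed for Lemma~\ref{Lema4PlanC-des} as well, so your version dovetails better with the subsequent decomposition lemma. Your flag on the $C^{1,1}$ regularity of $u\mapsto\langle\glap u,u_\pm\rangle$ is well placed; neither the paper nor your outline fully verifies it, and it does require the monotonicity of $g'$ and \eqref{cG0} as you indicate.
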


	\begin{proof}
		We define
		\begin{equation*}
			\overline{M}_{1} := \left\lbrace u \in \Wc \colon \di\int_{\O} u_{+} dx > 0 \right\rbrace,
		\end{equation*}

		\begin{equation*}
			\overline{M}_{2} := \left\lbrace u \in \Wc \colon \di\int_{\O} u_{-} dx > 0 \right\rbrace
		\end{equation*}
	and
		\begin{equation*}
			\overline{M}_{3} := \overline{M}_{1} \cap \overline{M}_{2}.
		\end{equation*}

	It is enough to prove that $M_{i}$ is a regular sub-manifold of $\Wc$, thanks to the facts that $M_{i} \subset \overline{M}_{i}$ and the sets $\overline{M}_{i}$ are open.
	
	We will define a $C^{1,1}$-function $\varphi_{i} \colon \Mri \rightarrow \R^{d}$, where $d=1$  if $i=1, 2$ and $d=2$ if $i=3$
	and $M_{i}$ will be the inverse image of a regular value of $\varphi_{i}$.
	
	For $u \in \Mru$, 
	\begin{equation*}
		\varphi_{1}(u) = \left\langle \glap u, u_+\right\rangle  - \langle \mathcal{F}'(u), u_{+} \rangle.
	\end{equation*}
	
	Also, for $u \in \Mrd$, let
	\begin{equation*}
		\varphi_{2}(u) =  \left\langle \glap u, u_-\right\rangle - \langle \mathcal{F}'(u), u_{-} \rangle.
	\end{equation*}

	Finally, for $u \in \Mrt$,
	\begin{equation*}
		\varphi_{3}(u) = \left( \varphi_{1}(u),\varphi_{2}(u) \right).
	\end{equation*}

	We want to prove that 0 is a regular value for $\varphi_{i}$. In fact, for $u \in M_{1}$, we get from Lemma \ref{Lema1PlanC} and $(\mathbf{H}_2)$ that
	
%	\begin{equation*}
%		\begin{array}{ll}
%			&\dfrac{d}{d \varepsilon} \varphi_{1} (u + \varepsilon u_{+}) = \dfrac{d}{d \varepsilon} \left[ \rho_{s,G} (u + \varepsilon u_{+})_{+} - \langle \mathcal{F}'(u + \varepsilon u_{+}), (u + \varepsilon u_{+})_{+} \rangle \right] \\
%			&= \dfrac{d}{d \varepsilon} \left[ \di\iint_{\Rnn} G \left(\dfrac{|(u + \varepsilon u_{+})_{+} (x)- (u + \varepsilon u_{+})_{+} (y)|}{|x-y|^{s}} \right)  d\mu - \int_{\O} f (x, (u + \varepsilon u_{+}))(u + \varepsilon u_{+})_{+} dx \right] \\
%			%&= \dfrac{d}{d \varepsilon} \left[ \di\int \int_{\Rnn} G \left( \dfrac{|(u + \varepsilon u_{+})_{+} (x)  - (u + \varepsilon u_{+})_{+} (y)|}{|x-y|^{s}} \right)  d\mu - \int_{\O} f (x, (u + \varepsilon u_{+}))(1 + \varepsilon)u_{+} dx \right] \\
%			&= \di\iint_{\Rnn} g \left( \dfrac{|(u + \varepsilon u_{+})_{+} (x)  - (u + \varepsilon u_{+})_{+} (y)|}{|x-y|^{s}} \right) \dfrac{(u + \varepsilon u_{+})_{+} (x)  - (u + \varepsilon u_{+})_{+} (y)}{|(u + \varepsilon u_{+})_{+} (x)  - (u + \varepsilon u_{+})_{+} (y)|} \dfrac{u_{+}(x)-u_{+}(y)}{|x-y|^{s}}  d\mu \\
%			&- \di\iint_{\O} f (x, (u + \varepsilon u_{+}))u_{+} dx - \int_{\O} f_{u} (x, (u + \varepsilon u_{+}))u_{+} (1 + \varepsilon)u_{+} dx
%		\end{array}
%	\end{equation*}

\begin{equation}\label{cuent}
\begin{split}
\langle \varphi'_{1}(u), u_{+} \rangle & = \frac{d}{d \varepsilon}\left\langle \glap (u+\varepsilon u_+), (u+\varepsilon u_+)_+\right\rangle\vert_{\varepsilon=0}  \\ & - \di\int_{\O} f (x, u)u_{+} dx - \int_{\O} f_{u} (x, u)u_{+}^{2} dx.
\end{split}
\end{equation}Observe that
\begin{equation}\label{cuen}
\begin{split}
& \frac{d}{d \varepsilon}\left\langle \glap (u+\varepsilon u_+), (u+\varepsilon u_+)_+\right\rangle\vert_{\varepsilon=0} \\ & \quad = \iint_{\mathbb{R}^{2n}}g'\left(\dfrac{u(x)-u(y)}{|x-y|^{s}} \right) \left(\dfrac{u_+(x)-u_+(y)}{|x-y|^{s}}\right)^{2}d\mu \\ & +\iint_{\mathbb{R}^{2n}} g\left( \dfrac{u(x)-u(y)}{|x-y|^{s}}\right)\dfrac{u_+(x)-u_+(y)}{|x-y|^{s}}\,d\mu,
\end{split}
\end{equation}where in the last integrals we have used that
\begin{equation*}
\begin{split}
&\frac{d}{d\varepsilon}\left((u+\varepsilon u_+)_+(x)- (u+\varepsilon u_+)_+(y)\right)\vert_{\varepsilon=0}  = \begin{cases} 0 & \text{if }u(x), u(y) \leq 0, \\
-u(y)& \text{if }u(x)<0, u(y) >0, \\
u(x) & \text{if }u(x)>0, u(y) <0, \\
u(x)-u(y) & \text{if }u(x), u(y) \geq 0,\\
\end{cases}\\ & \qquad \qquad   =u_+(x)-u_+(y).
\end{split}
\end{equation*}Hence, by Lemma \ref{ineq g derivada}, we get  from  \eqref{cuen} that
$$ \frac{d}{d \varepsilon}\left\langle \glap (u+\varepsilon u_+), (u+\varepsilon u_+)_+\right\rangle\vert_{\varepsilon=0} \leq p^{+}\left\langle \glap u, u_+\right\rangle.$$Thus, we get from \eqref{cuent} that
	\begin{equation}\label{eqq 2}
		\begin{array}{ll}
			\langle \varphi'_{1}(u), u_{+} \rangle
			%&\leq p^{+} \rho_{s,G}(u_{+}) - \di\int_{\O} f (x, u)u_{+} dx - \int_{\O} f_{u} (x, u)u_{+}^{2} dx \\&\leq p^{+} \rho_{s,G}(u_{+}) - \di\int_{\O} f (x, u)u_{+} dx - \int_{\O} f_{u} (x, u)u_{+}^{2} dx \\
			%&\leq p^{+} \di\int_{\O} f (x, u)u_{+} dx - \di\int_{\O} f (x, u)u_{+} dx - \int_{\O} f_{u} (x, u)u_{+}^{2} dx \\
			&\leq (p^{+} -1) \di\int_{\O} f (x, u)u_{+} dx - \int_{\O} f_{u} (x, u)u_{+}^{2} dx \\
			&\leq \left( p^{+} -1 -\dfrac{1}{c_{3}} \right)  \di\int_{\O} f (x, u)u_{+} dx,
		\end{array}
	\end{equation}
	where $p^{+}-1 - \dfrac{1}{c_{3}} < 0$ by $(\mathbf{H}_2)$.  Now, by Lemma \ref{Lema1PlanC}, we obtain
	\begin{equation*}
		\langle \varphi'_{1}(u), u_{+} \rangle \leq - C \rho_{s,G}(u_{+})
	\end{equation*}
	which is strictly negative by Lemma \ref{Lema2PlanC}. Then $\varphi'_{1}(u) \neq 0$ and we have $M_{1} = \varphi^{-1}_{1} (0)$ is a smooth sub-manifold of $\Wc$.
	
	In a similar way we can prove that  $M_{2}$ is also a smooth sub-manifold of $\Wc$.
	
We next consider $M_3$. For $u \in M_{3}$, we have \begin{equation*}
\begin{split}
&\left\langle \varphi'_1(u), u\right\rangle = \iint_{\mathbb{R}^{2n}}g'\left( \dfrac{u(x)-u(y)}{|x-y|^{s}}\right)\dfrac{u(x)-u(y)}{|x-y|^{s}}\dfrac{u_+(x)-u_+(y)}{|x-y|^{s}}\,d\mu \\ & \quad  +\iint_{\mathbb{R}^{2n}}g\left( \dfrac{u(x)-u(y)}{|x-y|^{s}}\right)\dfrac{u_+(x)-u_+(y)}{|x-y|^{s}}d\mu -\int_\O f(x, u)u_+dx - \int_\O f_u(x, u)u_+^{2}\,dx.
\end{split}
\end{equation*}Appealing to the fact that
$$sign(a-b)= sign (a_{+}-b_{+}),$$for all $a, b \in \R$ such that $a-b, a_+-b_+ \neq 0$, and using \eqref{cG0}, it follows that
\begin{equation*}
\begin{split}
& \iint_{\mathbb{R}^{2n}}g'\left( \dfrac{u(x)-u(y)}{|x-y|^{s}}\right)\dfrac{u(x)-u(y)}{|x-y|^{s}}\dfrac{u_+(x)-u_+(y)}{|x-y|^{s}}\,d\mu \\ & \qquad  +\iint_{\mathbb{R}^{2n}}g\left( \dfrac{u(x)-u(y)}{|x-y|^{s}}\right)\dfrac{u_+(x)-u_+(y)}{|x-y|^{s}}\,d\mu \\ & \qquad \leq p^{+}\iint_{\mathbb{R}^{2n}}g\left( \dfrac{u(x)-u(y)}{|x-y|^{s}}\right)\dfrac{u_+(x)-u_+(y)}{|x-y|^{s}}d\mu \\ & \qquad =p^{+}\left\langle \glap u, u_+ \right\rangle.
\end{split}
\end{equation*}Hence, recalling that $u \in M_1$ and proceeding as in \eqref{eqq 2}, we obtain that
$$\left\langle \varphi'_1(u), u\right\rangle < 0.$$Similarly, by the fact that
$$sign(a-b) \neq sign (a_{-}-b_{-}),$$for all $a, b \in \R$ such that $a-b, a_--b_- \neq 0$, using \eqref{cG0} and the choice of $c_3$ in ($\textbf{H}_2$), it follows that
$$\left\langle \varphi'_2(u), u\right\rangle \leq p^{-}\left\langle \glap u, u_+ \right\rangle - \di\int_{\O} f (x, u)u_{-} dx - \int_{\O} f_{u} (x, u)u_{-}^{2} dx < 0.$$ %we  prove that \textcolor{red}{$\langle \varphi'_{1}(u), u_{-} \rangle \geq 0, \langle \varphi'_{2}(u), u_{+} \rangle \geq 0$, then $\varphi'_{3} (u) \neq 0$ since from  the above calculations we know that
%$$\langle \varphi'_{1}(u), u \rangle =\langle \varphi'_{1}(u), u_+ \rangle - \langle \varphi'_{1}(u), u_- \rangle < 0$$ and $\langle \varphi'_{2}(u), u \rangle < 0$.}

%Let us see that $\langle \varphi'_{1}(u), u_{-} \rangle = 0$ for $u \in M_{3}$. In fact,
%
%
%%\textcolor{red}{en la segunda linea se esta usando $(u + \varepsilon u_{-})_{+}= u_{+}$? Por qu\'e?}\\
%%\textcolor{blue}{Caso $u>0$ entonces $u_-=0$ luego $u+\varepsilon u_-=u$ y $(u + \varepsilon u_{-})_{+}=u$\\
%	%Caso $u<0$  luego $u+\varepsilon u_-=u+\varepsilon(-u)=(1-\varepsilon)u$ tomando parte positiva $(u+\varepsilon u_-)_+=0$\\Finalmente $(u+\varepsilon u_-)_+=u_+$ }
%\begin{equation*}
%	\begin{array}{ll}
%		\dfrac{d}{d \varepsilon} \varphi_{1} (u + \varepsilon u_{-}) &= \dfrac{d}{d \varepsilon} \left[ \rho_{s,G} ((u + \varepsilon u_{-})_{+}) - \langle \mathcal{F}'(u + \varepsilon u_{-}), (u + \varepsilon u_{-})_{+} \rangle \right] \\
%		&= \dfrac{d}{d \varepsilon} \left[ \rho_{s,G} (u_{+}) - \di\int_{\O} f (x, (u + \varepsilon u_{-}))u_{+} dx \right] \\
%		&= - \di\int_{\O} f_{u} (x, u + \varepsilon u_{-})u_{+}u_{-} dx.
%	\end{array}
%\end{equation*}
%Then
%\begin{equation*}
%	\dfrac{d}{d \varepsilon} \varphi_{1} (u + \varepsilon u_{-}) \mid _{\varepsilon = 0} = 0.
%\end{equation*}
%
%Analogously, it holds $\langle \varphi'_{2}(u), u_{+} \rangle = 0$.

 Therefore $M_{3}$ is a smooth sub-manifold of $\Wc$.
\end{proof}
\begin{lema}\label{Lema4PlanC-completo}
The sets $K_{i}$ are complete. 
\end{lema}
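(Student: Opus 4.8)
The plan is to show that each $K_i$ is a closed subset of the Banach space $W_0^{s,G}(\Omega)$; since closed subsets of complete metric spaces are complete, this suffices. We work with the three cases simultaneously, the main point being to rule out that a limit of elements of $K_i$ escapes the constraint set either by having its positive (or negative) part vanish in $L^1$, or by failing the nonlinear identity $\langle\glap u, u_\pm\rangle = \langle\mathcal F'(u), u_\pm\rangle$.

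First I would take a sequence $\{u_n\}_{n\in\N}\subset K_i$ with $u_n \to u$ in $W_0^{s,G}(\Omega)$. The sign conditions pass to the limit immediately: if $u_n \geq 0$ for all $n$ (case $i=1$), then $u\geq 0$, and similarly for $i=2$; for $i=3$ no sign condition is imposed. Next, by Lemma \ref{Lema2PlanC} there is $D>0$ with $[u_{n,+}]_{s,G}^{p^+}\geq D$ (and $[u_{n,-}]_{s,G}^{p^+}\geq D$ in the case $i=3$), so since $u\mapsto u_+$ and $u\mapsto u_-$ are continuous from $W_0^{s,G}(\Omega)$ to itself, we get $[u_+]_{s,G}^{p^+}\geq D>0$, hence $u_+\neq 0$, so $\int_\Omega u_+\,dx>0$; likewise $\int_\Omega u_-\,dx>0$ when $i=3$. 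This shows the ``openness-type'' constraints defining $\overline M_i$ are not lost in the limit.

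It remains to pass to the limit in the nonlinear identity $\langle\glap u_n, u_{n,\pm}\rangle = \langle\mathcal F'(u_n), u_{n,\pm}\rangle$, i.e.\ $\langle\varphi_j(u_n)\rangle = 0$ in the notation of Lemma \ref{Lema4PlanC}. On the left-hand side I would use that $\glap\colon W_0^{s,G}(\Omega)\to W^{-s,\widetilde G}(\Omega)$ is continuous (Lemma \ref{Lemma2.1}) together with the continuity of $v\mapsto v_\pm$ in $W_0^{s,G}(\Omega)$, so $\langle \glap u_n, u_{n,\pm}\rangle \to \langle\glap u, u_\pm\rangle$. For the right-hand side, $\langle\mathcal F'(u_n), u_{n,\pm}\rangle = \int_\Omega f(x,u_n)u_{n,\pm}\,dx$; using hypothesis $(\mathbf H_0)$, the bound $|f(x,u_n)u_{n,\pm}|\le c_0(1+|b(u_n)|)|u_n|$, the strong convergence $u_n\to u$ in $W_0^{s,G}(\Omega)\hookrightarrow L^B(\Omega)$ (the embedding is compact since $B\ll G_{n/s}$, by Theorem \ref{embedding}), and the growth $tb(t)\le p_B^+ B(t)$ from \eqref{G1}, one extracts an $L^B$-dominating function along a subsequence and applies the dominated convergence theorem (in the Orlicz space / via Hölder with $\widetilde B$) to conclude $\int_\Omega f(x,u_n)u_{n,\pm}\,dx \to \int_\Omega f(x,u)u_\pm\,dx$. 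Since the limit is independent of the subsequence, the full sequence converges, and therefore $\langle\glap u, u_\pm\rangle = \langle\mathcal F'(u), u_\pm\rangle$. Combined with the previous paragraph, $u\in K_i$, so $K_i$ is closed, hence complete.

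The main obstacle I expect is the convergence of the nonlinear term $\int_\Omega f(x,u_n)u_{n,\pm}\,dx$: one has to be careful that the product $f(x,u_n)u_{n,\pm}$ is genuinely equi-integrable, which is where the strict gap $G\ll B \ll G_{n/s}$ and the compactness of the embedding into $L^B$ are essential — this is the same mechanism used in the proof of Lemma \ref{L3.4} for the compactness of $\Su$, and I would mirror that argument. Everything else (sign conditions, the quantitative lower bounds from Lemma \ref{Lema2PlanC}, continuity of $\glap$ and of the positive/negative part maps) is routine.
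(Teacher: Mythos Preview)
Your proposal is correct and follows essentially the same route as the paper: both arguments use Lemma~\ref{Lema2PlanC} to rule out degeneration of $u_\pm$, the continuity of $\glap$ (Lemma~\ref{Lemma2.1}) for the left-hand side of the constraint, and $(\mathbf{H}_0)$ together with the compact embedding into $L^B(\Omega)$ for the right-hand side. The only cosmetic difference is that the paper passes to the limit in $\langle \glap u_n, u_{n,\pm}\rangle$ via the pairing of the strong limit $\glap u_n\to\glap u$ in $W^{-s,\widetilde G}$ against the weak limit $u_{n,\pm}\rightharpoonup u_\pm$, whereas you invoke strong continuity of $v\mapsto v_\pm$ in $W_0^{s,G}(\Omega)$; either works.
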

\begin{proof}
We consider $K_{3}$, the proofs for $K_1$ and $K_2$ are similar. Let $\{u_{k}\}_{k\in\N}$ be a Cauchy sequence in $K_{3}$, then $u_{k} \rightarrow u$ in $\Wc$. In particular, $\glap u_k \to \glap u$ in $W^{-s, \Gm}(\Omega)$ and $u_{k_{+}} \rightharpoonup u_{+}$ in $\Wc$. So,
	\begin{equation}
	\begin{split}
	& \lim_{k \to \infty}\left\langle \glap u_k, u_{k_{+}}\right\rangle \\ & \quad = \lim_{k \to \infty} \left[\left\langle \glap u_k - \glap u, u_{k_{+}}\right\rangle + \left\langle \glap u, u_{k_{+}}\right\rangle \right] \\ & \quad = \left\langle \glap u, u_{+}\right\rangle.
	\end{split}
	\end{equation}Moreover, using the compact embedding $\Wc \hookrightarrow L^{B}(\O)$, we get $u_{k_{+}} \rightarrow u_{+}$ in $L^{B}(\O)$ and hence by $(\mathbf{H}_0)$, we obtain
	\begin{equation*}
		\di\int_{\O} f(x, u_{k})u_{k_{+}} dx \rightarrow \int_{\O} f(x, u)u_{+} dx.
	\end{equation*}Hence, recalling Lemma \ref{Lema2PlanC}, we  get $u_+ \in M_1$. Similarly, $u_- \in M_2$, and thus we obtain $u \in K_3$. This shows that $K_3$ is complete.
\end{proof}

	For the next result, we recall the definition of tangent space of Banach manifolds. 
\begin{defi}
Let $X$ be a Banach space and $M \subset X$ be a Banach manifold. The tangent space at $u$ of  $M$	is
$$
T_{u}M=\{v\colon\exists\alpha:(-1,1)
\to M\quad\alpha(0)=u\quad\mbox{ and }\quad\alpha^\prime(0)=v\}. $$
\end{defi}We recall that in Banach manifolds, $T_{u}M$ is a closed linear subspace of $X$.

	\begin{lema}\label{Lema4PlanC-des} For every $u \in M_{i}$ we obtain the direct decomposition
		\begin{equation*}
			T_{u}\Wc = T_{u} M_{i} \oplus span \{u_{+}, u_{-} \}.
		\end{equation*}
		Moreover, the projection onto the first component in this decomposition is uniformly continuous on bounded sets of $M_{i}$.
	\end{lema}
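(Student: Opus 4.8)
The plan is to show that for fixed $u\in M_i$, the tangent space $T_u M_i$ is the kernel of the differential $d\varphi_i(u)$, which is a surjective linear map onto $\R^d$ ($d=1$ for $i=1,2$, $d=2$ for $i=3$), and then to exhibit an explicit complement spanned by $u_+$ (for $i=1$), $u_-$ (for $i=2$), or $\{u_+,u_-\}$ (for $i=3$). Concretely, since $M_i=\varphi_i^{-1}(0)$ and $0$ is a regular value (by Lemma~\ref{Lema4PlanC}), the implicit function theorem gives $T_u M_i=\ker d\varphi_i(u)$. So the direct-sum claim reduces, for $i=1$, to showing that $v\mapsto \langle \varphi_1'(u),v\rangle$ restricted to $\mathrm{span}\{u_+\}$ is an isomorphism onto $\R$; but this is exactly the content of the regular-value computation, where we showed $\langle\varphi_1'(u),u_+\rangle\le -C\rho_{s,G}(u_+)<0$ (using Lemma~\ref{Lema2PlanC} to ensure $u_+\ne 0$). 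Hence any $v\in T_u\Wc$ decomposes as $v=(v-\lambda u_+)+\lambda u_+$ with $\lambda=\langle\varphi_1'(u),v\rangle/\langle\varphi_1'(u),u_+\rangle$ chosen so the first summand lies in $\ker d\varphi_1(u)=T_uM_1$, and the sum is direct because $\langle\varphi_1'(u),u_+\rangle\ne 0$. The case $i=2$ is identical with $u_-$ in place of $u_+$.

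For $i=3$ the argument is the same but with a $2\times 2$ matrix: one must check that the linear map $\R^2\to\R^2$, $(\alpha,\beta)\mapsto d\varphi_3(u)(\alpha u_++\beta u_-)$, is invertible, i.e. that the matrix
\begin{equation*}
\mathcal{M}(u)=\begin{pmatrix} \langle \varphi_1'(u),u_+\rangle & \langle \varphi_1'(u),u_-\rangle \\ \langle \varphi_2'(u),u_+\rangle & \langle \varphi_2'(u),u_-\rangle \end{pmatrix}
\end{equation*}
has nonzero determinant. Here I would use the structure established in the proof of Lemma~\ref{Lema4PlanC}: the diagonal entries satisfy $\langle\varphi_1'(u),u_+\rangle<0$ and $\langle\varphi_2'(u),u_-\rangle<0$ (same computation as \eqref{eqq 2}, now with $u_\pm$ appearing because $M_3\subset M_1\cap M_2$), while the off-diagonal entries are non-positive: differentiating $\varphi_1$ in the $u_-$-direction only affects the ``cross'' interaction integrals over $D_1\times D_2$ and $D_2\times D_1$, and the monotonicity of $g$ together with $f_u\ge 0$ (from $(\mathbf{H}_2)$) forces $\langle\varphi_1'(u),u_-\rangle\le 0$, and symmetrically $\langle\varphi_2'(u),u_+\rangle\le 0$. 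A matrix with strictly negative diagonal and non-positive off-diagonal entries has determinant $\ge \langle\varphi_1'(u),u_+\rangle\langle\varphi_2'(u),u_-\rangle>0$, so $\mathcal{M}(u)$ is invertible and the direct decomposition follows as before.

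For the final assertion—uniform continuity of the projection $P_u\colon T_u\Wc\to T_uM_i$ on bounded subsets of $M_i$—the point is that $P_u$ is given by the explicit formula $P_u v=v-\mathcal{M}(u)^{-1}(d\varphi_i(u)v)\cdot(u_+,u_-)$ (suitably interpreted for $d=1$), so it suffices to control $\mathcal{M}(u)^{-1}$ and the linear functionals $\varphi_i'(u)$ uniformly. On a bounded set of $M_i$, the lower bounds from Lemmas~\ref{Lema1PlanC}, \ref{Lema2PlanC} give a uniform lower bound $|\det\mathcal{M}(u)|\ge D'>0$ (since the diagonal entries are bounded away from zero by $C\rho_{s,G}(u_\pm)\ge C D>0$), and since $\varphi_i$ is $C^{1,1}$ its derivative is bounded and locally Lipschitz on bounded sets; combining these with the boundedness of $\|u_\pm\|_{s,G}$ yields that $u\mapsto P_u$ is Lipschitz, hence uniformly continuous, on bounded subsets of $M_i$.

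\textbf{The main obstacle} I anticipate is the sign analysis of the off-diagonal entries of $\mathcal{M}(u)$ for the $i=3$ case: one must carefully split the double integrals defining $\langle\varphi_1'(u),u_-\rangle$ over the regions $D_1\times D_1$, $D_1\times D_2$, $D_2\times D_1$, $D_2\times D_2$ and observe that on $D_1\times D_1$ (where $u_-\equiv 0$ on the relevant arguments) the contribution vanishes, so that only the genuinely ``mixed-sign'' pieces survive, and there the monotonicity of $g$ together with $f_u\ge 0$ controls the sign—this is exactly the kind of computation carried out when proving $M_3$ is a manifold, and it should be quoted or lightly reproduced rather than redone from scratch.
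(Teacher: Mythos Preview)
Your treatment of $i=1,2$ is correct and is exactly what the paper does: write $v=(v-\alpha u_+)+\alpha u_+$ with $\alpha=\langle\varphi_1'(u),v\rangle/\langle\varphi_1'(u),u_+\rangle$ and invoke $\langle\varphi_1'(u),u_+\rangle<0$ from the regular-value computation. The paper, incidentally, does not prove the uniform-continuity clause at all, and for $i=3$ it merely writes ``analogously'' without confronting the $2\times2$ matrix; so your attempt to supply those details goes beyond what the paper does.

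However, your argument for the invertibility of $\mathcal{M}(u)$ contains a genuine error. You assert that a $2\times2$ matrix with strictly negative diagonal and non-positive off-diagonal entries satisfies $\det\mathcal{M}\ge a_{11}a_{22}>0$. This inequality points the wrong way: if $a_{12},a_{21}\le 0$ then $a_{12}a_{21}\ge 0$, hence
\[
\det\mathcal{M}=a_{11}a_{22}-a_{12}a_{21}\;\le\;a_{11}a_{22},
\]
and nothing prevents the determinant from being negative. For instance $\bigl(\begin{smallmatrix}-1&-2\\-2&-1\end{smallmatrix}\bigr)$ has your sign pattern but determinant $-3$. Your sign analysis of the off-diagonal entries is in fact correct (the cross term $\iint g'(D_s u)\,D_s u_+\,D_s u_-\,d\mu$ is $\le 0$ because $(u_+(x)-u_+(y))(u_-(x)-u_-(y))\le 0$ pointwise, and the $f$-contribution vanishes since $u_+u_-\equiv 0$); the problem is purely the linear-algebra step.

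To repair the $i=3$ case you need a \emph{quantitative} comparison showing the off-diagonal terms are dominated by the diagonal ones, not just a sign. One route: note that the off-diagonal entries are actually equal (the integrand is symmetric in the roles of $u_+$ and $u_-$), so $\mathcal{M}(u)$ is symmetric, and it suffices to show it is negative definite, i.e.\ $\langle\varphi_1'(u),\alpha u_++\beta u_-\rangle\alpha+\langle\varphi_2'(u),\alpha u_++\beta u_-\rangle\beta<0$ for $(\alpha,\beta)\neq(0,0)$. Alternatively, use the estimates already proved in Lemma~\ref{Lema4PlanC} for $\langle\varphi_1'(u),u\rangle$ and $\langle\varphi_1'(u),u_+\rangle$ together to extract strict diagonal dominance in one row, and argue symmetrically for the other. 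Either way, the bare sign pattern you invoke is not enough, and since your uniform-continuity argument rests on a uniform lower bound for $|\det\mathcal{M}(u)|$, that part is currently unsupported as well.
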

	\begin{proof}
We show the descomposition of $M_1$.
Let $v \in T_{u}\Wc$ and $v = v_{1} + v_{2}$, where $v_{2} = \alpha u_{+}$ and $v_{1} = v - v_{2}$. We are interesting in choosing $\alpha$ so that $v_{1} \in T_{u} M_{1}$,
	\begin{equation*}
		\langle \varphi'_{1}(u), v \rangle = \langle \varphi'_{1}(u), \alpha u_{+} \rangle + \langle \varphi'_{1}(u), v_{1} \rangle.
	\end{equation*}
	
	If we choose
	\begin{equation*}
		\alpha = \dfrac{\langle \varphi'_{1}(u), v \rangle}{\langle \varphi'_{1}(u), u_{+} \rangle},
	\end{equation*}then $\langle \varphi'_{1}(u), v_{1} \rangle = 0$.
	So, we obtain
	\begin{equation*}
		T_{u}\Wc  = T_{u} M_{1} \oplus  span \{ u_{+} \},
	\end{equation*}
	where $M_{1} = \left\lbrace u \colon \varphi_{1}(u)=0 \right\rbrace$ and $T_{u} M_{1} = \left\lbrace v \colon \langle \varphi'_{1}(u), v \rangle = 0 \right\rbrace$.
	Analogously,
	\begin{equation*}
			T_{u}\Wc  = T_{u} M_{2} \oplus  span \{ u_{-} \}
		\end{equation*} and
	\begin{equation*}
			T_{u}\Wc  = T_{u} M_{3} \oplus  span \{ u_{+}, u_{-} \}.
	\end{equation*}
	This finishes the proof.
	\end{proof}
	
	In the next lemma, we prove that the unrestricted  functional $\Phi$ satisfies the Palais-Smale condition, that is, if $\{u_{j}\}_{j \in \N} \subset \Wc$ is a sequence such that $\Phi(u_{j})$ is uniformly bounded and $\Phi' (u_{j}) \rightarrow 0$ strongly in $W^{-s, \widetilde{G}} (\O)$, then $\{u_{j}\}_{j \in \N}$ contains a strongly convergent subsequence. The arguments of the proof are standard. However, we write it for convenience of the reader.
	\begin{lema}\label{PS}
		The unrestricted functional $\Phi$ satisfies the Palais-Smale condition.
	\end{lema}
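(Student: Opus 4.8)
The plan is to take a Palais--Smale sequence $\{u_j\}_{j\in\N}\subset\Wc$, i.e. $|\Phi(u_j)|\le C$ and $\Phi'(u_j)\to 0$ in $\Wd$, and first show it is bounded in $\Wc$. For this I would test $\Phi'(u_j)$ against $u_j$ itself: since $\Phi'(u_j)\to0$ in the dual, there is a sequence $\varepsilon_j\to0$ with $|\langle\Phi'(u_j),u_j\rangle|\le\varepsilon_j\|u_j\|_{s,G}$, and combining this with the bound on $\Phi(u_j)$ gives, using $\langle(-\Delta_g)^s u_j,u_j\rangle\le p^+\rho_{s,G}(u_j)$ and $(\mathbf{H}_2)$ (which controls $\int_\Omega f(x,u_j)u_j$ from above and below by $\int_\Omega F(x,u_j)$ and by $c_2\int_\Omega F$),
\begin{equation*}
p^+ C + \varepsilon_j\|u_j\|_{s,G} \geq p^+\Phi(u_j) - \langle\Phi'(u_j),u_j\rangle \geq \left(\frac{c_2}{p^+}-1\right)\int_\Omega f(x,u_j)u_j\,dx \geq c\,\rho_{s,G}(u_j),
\end{equation*}
where $c_2>p^+$ is exactly what makes the coefficient positive. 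By Lemma \ref{norm and modular} this forces $\rho_{s,G}(u_j)\ge \min\{[u_j]_{s,G}^{p^-},[u_j]_{s,G}^{p^+}\}$, and since the left-hand side grows only linearly in $\|u_j\|_{s,G}=\|u_j\|_{G}+[u_j]_{s,G}$ (and the Poincaré-type inequality controls $\|u_j\|_G$ by $[u_j]_{s,G}$), the sequence $\{[u_j]_{s,G}\}$ must be bounded; hence $\{u_j\}$ is bounded in $\Wc$.

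Next I would extract a subsequence with $u_j\rightharpoonup u$ in $\Wc$; by Theorem \ref{embedding} (compact embedding, since $B\ll G_{n/s}$) we get $u_j\to u$ strongly in $L^B(\Omega)$, pointwise a.e., and dominated by some $h\in L^B(\Omega)$. Testing $\Phi'(u_j)-\Phi'(u)$ against $u_j-u$ gives
\begin{equation*}
\langle(-\Delta_g)^s u_j-(-\Delta_g)^s u,\,u_j-u\rangle = \langle\Phi'(u_j)-\Phi'(u),u_j-u\rangle + \int_\Omega\big(f(x,u_j)-f(x,u)\big)(u_j-u)\,dx.
\end{equation*}
The first term on the right tends to $0$ because $\Phi'(u_j)\to0$ in $\Wd$ and $\{u_j-u\}$ is bounded; the second tends to $0$ by the dominated convergence theorem, using $(\mathbf{H}_0)$, the bound $|f(x,u_j)|\le c_0(1+|b(h)|)$, Hölder's inequality in Orlicz spaces, and $u_j\to u$ in $L^B(\Omega)$. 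Therefore $\langle(-\Delta_g)^s u_j-(-\Delta_g)^s u,u_j-u\rangle\to0$.

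Finally, I invoke the $(S)_+$ property of $(-\Delta_g)^s$ established in Lemma \ref{Lemma2.1}: from $u_j\rightharpoonup u$ together with $\limsup_j\langle(-\Delta_g)^s u_j,u_j-u\rangle\le0$ we conclude $u_j\to u$ strongly in $\Wc$, which is exactly the Palais--Smale condition. The only genuinely delicate point is the boundedness step — making sure the sign condition $c_2>p^+$ in $(\mathbf{H}_2)$ is used correctly to get a strictly positive coefficient in front of $\int_\Omega f(x,u_j)u_j\,dx$, and then propagating the resulting modular bound to a norm bound via Lemma \ref{norm and modular} — while the compactness step is routine once the $(S)_+$ property and the compact Orlicz--Sobolev embedding are in hand.
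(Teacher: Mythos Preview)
Your strong convergence step is correct and in fact differs from the paper's argument: you invoke the $(S)_+$ property of $(-\Delta_g)^s$ from Lemma \ref{Lemma2.1}, while the paper instead writes $u_j$ as the solution of $(-\Delta_g)^s v=f(x,u_j)+\Phi'(u_j)$ and uses that $(-\Delta_g)^s$ has a \emph{continuous inverse} (also Lemma \ref{Lemma2.1}), together with the fact that $f(\cdot,u_j)+\Phi'(u_j)\to f(\cdot,u)$ in $\Wd$. Both routes are legitimate; yours is the more standard one and avoids the detour through the inverse operator.

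The boundedness step, however, has a genuine gap. In your displayed chain the final inequality $\int_\Omega f(x,u_j)u_j\,dx\ge c\,\rho_{s,G}(u_j)$ is not available: that is exactly the content of Lemma \ref{Lema1PlanC}, but its proof uses the constraint $\langle(-\Delta_g)^su,u\rangle=\int_\Omega f(x,u)u\,dx$ defining $K_i$, which a general Palais--Smale sequence does not satisfy. With the combination $p^+\Phi(u_j)-\langle\Phi'(u_j),u_j\rangle$ you only get a linear bound on $\int_\Omega f(x,u_j)u_j\,dx$, not on $\rho_{s,G}(u_j)$.

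The fix is to change the linear combination: compute instead $c_2\Phi(u_j)-\langle\Phi'(u_j),u_j\rangle$. Using $\langle(-\Delta_g)^su_j,u_j\rangle\le p^+\rho_{s,G}(u_j)$ and $c_2\mathcal{F}(u_j)\le\int_\Omega f(x,u_j)u_j\,dx$ from $(\mathbf{H}_2)$, this gives directly
\[
c_2C+\varepsilon_j\|u_j\|_{s,G}\ \ge\ c_2\Phi(u_j)-\langle\Phi'(u_j),u_j\rangle\ \ge\ (c_2-p^+)\,\rho_{s,G}(u_j),
\]
and now the modular-to-norm argument via Lemma \ref{norm and modular} and Poincar\'e goes through as you intended. (Incidentally, the constant in your middle inequality should be $1-p^+/c_2$ rather than $c_2/p^+-1$, though both are positive.)
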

	\begin{proof}
		Let $\{u_{j}\}_{j \in \N} \subset \Wc$ be a Palais-Smale sequence, i.e $\Phi(u_{j})$ is uniformly bounded and $\Phi' (u_{j}) \rightarrow 0$ strongly in $W^{-s, \widetilde{G}} (\O)$. Since $\Phi(u_{j})$ is uniformly bounded, using Lemma \ref{Lema1PlanC}, $ \{u_{j}\}_{j \in \N}$ is bounded in $\Wc$.
		
		We define $\Phi'(u_{j}) := \psi _{j}$, then
		\begin{equation*}
			\langle \Phi'(u_{j}), z \rangle = \langle \psi_{j}, z \rangle, ~~for~all~z\in \Wc.
		\end{equation*}
		
		Also,
		\begin{equation*}
			\langle \Phi'(u_{j}), z \rangle = \di\iint_{\Rnn} g \left( \dfrac{u_{j}(x) - u_{j} (y)}{|x-y|^{s}}\right) \dfrac{z(x)-z(y)}{|x-y|^{s}}  d\mu - \di\int_{\O} f (x, u_{j})z dx.
		\end{equation*}
	
		Then, $v$ is a weak solution of problem
		\begin{equation*}
			\left\{
			\begin{array}{ccll}
				\qquad (-\bigtriangleup_{g})^{s} v = f(x,u_{j}) + \psi_{j} &~~& \text{in}~~\O,\\
				v= 0&~~&\text{in}~~\O^{c}.
			\end{array}
			\right.
		\end{equation*}Observe that by $(\textbf{H}_0)$, $f_{j} := f(x,u_{j}) + \psi_{j} \in W^{-s, \widetilde{G}} (\O)$.
	
		Now, we define the operator $T \colon W^{-s, \widetilde{G}} (\O) \rightarrow \Wc$, as $T(h) : = u$, where $u$ is a weak solution of problem
		\begin{equation*}
			\left\{
			\begin{array}{ccll}
				(-\bigtriangleup_{g})^{s} u =h &~~& \text{in}~~\O,\\
				\quad\quad\,\,\,\,\,\,\,u = 0&~~&\text{in}~~\O^{c}.
			\end{array}
			\right.
		\end{equation*}

By Lemma \ref{Lema2.1} we know that $(-\bigtriangleup_{g})^{s}$ is continuous and that admits a continuous	inverse on $W^{-s, \widetilde{G}} (\O)$.	It is enough to see that $f_{j} \rightarrow f(\cdot, u(\cdot))$ in $W^{-s, \widetilde{G}} (\O)$. As $\Phi' (u_{j}) \rightarrow 0$ strongly in $W^{-s, \widetilde{G}} (\O)$, we only need to check that $\{f(x,u_{j})\}_{j\in\N}$ has a subsequence that strongly converges in $W^{-s, \widetilde{G}} (\O)$, the proof of this fact can be found in \cite[Lemma 3.5]{BaS}. This finishes the proof.
		%As $\Phi' (u_{j}) \rightarrow 0$ strongly in $W^{-s, \widetilde{G}} (\O)$, then $\psi(u_{j}) \rightarrow 0$.
		
		%Therefore, as $T$ is continuous, then $T(f_{j}) \rightarrow T(f):=u$. Hence, $u_{j}$ converge.
	\end{proof}
	In the sequel, we will need the following  fact from \cite[Lemma 4.4]{MO23}: 
		\begin{equation}\label{ineq u pm}
		\Phi(u) >\Phi(u_{+})+ \Phi(-u_{-}),
		\end{equation}for all $u \in W^{s, G}(\mathbb{R}^n)$ with $u_{\pm}\neq 0$.

	The following result follows exactly as in \cite[Lemma 5]{J06}, using Lemma \ref{Lema4PlanC-des} together with \eqref{ineq u pm}, Lemma \ref{Lema2PlanC} with Poincar\'e inequality (see Proposition 3.2 in \cite{Sa}), the previous lemma and the fact that $M_i$ are complete manifolds.
	\begin{lema}\label{L28}
		The restricted functionals $\Phi \mid _{M_{i}}$ satisfy the Palais-Smale condition.
	\end{lema}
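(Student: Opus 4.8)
The plan is to deduce the Palais--Smale condition for each $\Phi\mid_{M_i}$ from the Palais--Smale condition for the unrestricted functional $\Phi$, already established in Lemma \ref{PS}. Fix $i\in\{1,2,3\}$ and let $\{u_j\}_{j\in\N}\subset M_i$ satisfy that $\Phi(u_j)$ is bounded and $(\Phi\mid_{M_i})'(u_j)\to 0$ in $T_{u_j}^{*}M_i$. First I would prove that $\{u_j\}$ is bounded in $\Wc$: using the constraint defining $M_i$ together with $(\mathbf{H}_2)$, one obtains an estimate of the form $\rho_{s,G}(u_j)\le C\,\Phi(u_j)\le C$ exactly as in Lemma \ref{Lema1PlanC}; then Lemma \ref{norm and modular} bounds $[u_j]_{s,G}$ and the Poincar\'e inequality in fractional Orlicz--Sobolev spaces (Proposition 3.2 in \cite{Sa}) bounds $\|u_j\|_{s,G}$. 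The inequality \eqref{ineq u pm} and Lemma \ref{Lema2PlanC} are invoked here, as in \cite[Lemma 5]{J06}, to control the positive and negative parts of the $u_j$ uniformly along the sequence.

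The central step is to upgrade $(\Phi\mid_{M_i})'(u_j)\to 0$ to $\Phi'(u_j)\to 0$ in $\Wd$. Since $u_j\in M_i$, the defining constraint gives $\langle\Phi'(u_j),u_{j,+}\rangle=\langle\glap u_j,u_{j,+}\rangle-\langle\mathcal{F}'(u_j),u_{j,+}\rangle=0$ whenever $i\in\{1,3\}$, and symmetrically $\langle\Phi'(u_j),u_{j,-}\rangle=0$ whenever $i\in\{2,3\}$; thus $\Phi'(u_j)$ annihilates $span\{u_{j,+},u_{j,-}\}$. For $v\in T_{u_j}\Wc$ with $\|v\|_{s,G}\le 1$, the direct decomposition of Lemma \ref{Lema4PlanC-des} writes $v=P_{u_j}v+w$ with $P_{u_j}v\in T_{u_j}M_i$ and $w\in span\{u_{j,+},u_{j,-}\}$, so that
$$|\langle\Phi'(u_j),v\rangle|=|\langle\Phi'(u_j),P_{u_j}v\rangle|=|\langle(\Phi\mid_{M_i})'(u_j),P_{u_j}v\rangle|\le\|(\Phi\mid_{M_i})'(u_j)\|\,\|P_{u_j}v\|_{s,G}.$$
Since $\{u_j\}$ is bounded and, by the second assertion of Lemma \ref{Lema4PlanC-des}, the projections $P_{u_j}$ are uniformly bounded on bounded subsets of $M_i$, this yields $\|\Phi'(u_j)\|_{\Wd}\le C\,\|(\Phi\mid_{M_i})'(u_j)\|\to 0$.

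Hence $\{u_j\}$ is a Palais--Smale sequence for $\Phi$, and Lemma \ref{PS} furnishes a subsequence, still denoted $\{u_j\}$, with $u_j\to u$ strongly in $\Wc$. To check that the limit lies in $M_i$ I would argue as in Lemma \ref{Lema4PlanC-completo}: $\glap u_j\to\glap u$ in $\Wd$, $u_{j,\pm}\rightharpoonup u_{\pm}$ in $\Wc$, the compact embedding $\Wc\hookrightarrow L^{B}(\O)$ together with $(\mathbf{H}_0)$ pass the term $\int_{\O}f(x,u_j)u_{j,\pm}\,dx$ to the limit, and Lemma \ref{Lema2PlanC} gives $[u_{\pm}]_{s,G}>0$, whence $\int_{\O}u_{\pm}\,dx>0$; since the constraint maps $\varphi_i$ of Lemma \ref{Lema4PlanC} are continuous, $\varphi_i(u)=\lim_j\varphi_i(u_j)=0$, so $u\in M_i$ and $u_j\to u$ in $M_i$. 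I expect the main obstacle to be the upgrading step of the second paragraph: a single projection $P_{u_j}$ is bounded, but one needs a bound on $\|P_{u_j}\|$ that is uniform in $j$, and this is precisely what the uniform continuity of the projection in Lemma \ref{Lema4PlanC-des}, combined with the lower bounds of Lemma \ref{Lema2PlanC}, provides.
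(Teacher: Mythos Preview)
Your proposal is essentially the same as the paper's: the paper does not give a detailed proof but states that the result ``follows exactly as in \cite[Lemma 5]{J06}, using Lemma \ref{Lema4PlanC-des} together with \eqref{ineq u pm}, Lemma \ref{Lema2PlanC} with Poincar\'e inequality, the previous lemma and the fact that $M_i$ are complete manifolds,'' and you invoke precisely the same ingredients in the same roles (decomposition to upgrade $(\Phi|_{M_i})'(u_j)\to0$ to $\Phi'(u_j)\to0$, then Lemma \ref{PS}, then completeness for the limit).

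One point worth tightening: your boundedness step says you get $\rho_{s,G}(u_j)\le C\,\Phi(u_j)$ ``exactly as in Lemma \ref{Lema1PlanC},'' but that lemma is stated for $K_i$, where the constraint gives $\langle\glap u,u\rangle=\int_\Omega f(x,u)u\,dx$. For $u_j\in M_1$ (resp.\ $M_2$) you only have the constraint paired with $u_{j,+}$ (resp.\ $u_{j,-}$), so the negative (resp.\ positive) part is not controlled by the constraint alone. You do flag this by citing \eqref{ineq u pm}, Lemma \ref{Lema2PlanC}, and \cite[Lemma 5]{J06} for controlling the parts, which is exactly what the paper does; just be aware that the actual argument there is a bit more delicate than a direct repetition of Lemma \ref{Lema1PlanC}, and relies on the uniform lower bound from Lemma \ref{Lema2PlanC} (which holds on $M_i$ for the appropriate part) together with the decomposition to close the estimate.
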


Moreover, we get as in \cite{St}	
%It is easy to check the following lemma:
	
	\begin{lema}\label{L29}
		Let $u \in M_{i}$ be a critical point of the restricted functional $\Phi \mid _{M_{i}}$. Then $u$ is also a critical point of the unrestricted functional $\Phi$ and hence a weak solution to the problem \eqref{1.1} where $f$ verifies $(\mathbf{H}_0)-(\mathbf{H}_2)$.
	\end{lema}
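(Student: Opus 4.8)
The statement asserts that a critical point $u \in M_i$ of the restricted functional $\Phi|_{M_i}$ is in fact a free critical point of $\Phi$ on $W^{s,G}_0(\Omega)$, and hence (by the variational characterization recorded earlier) a weak solution of \eqref{1.1}. The proof is the standard Lagrange-multiplier argument adapted to the manifolds $M_i$. First I would use the decomposition from Lemma~\ref{Lema4PlanC-des}: at $u \in M_i$ we have
$$
T_u\Wc = T_u M_i \oplus \mathrm{span}\{u_+, u_-\}
$$
(with only $u_+$ when $i=1$, only $u_-$ when $i=2$, both when $i=3$). Since $u$ is a critical point of $\Phi|_{M_i}$, we have $\langle \Phi'(u), v\rangle = 0$ for every $v \in T_u M_i$. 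So to conclude $\Phi'(u) = 0$ it suffices to check that $\langle \Phi'(u), u_+\rangle = 0$ (and, in the $K_3$ case, also $\langle \Phi'(u), u_-\rangle = 0$), because then $\Phi'(u)$ annihilates all of $T_u\Wc$.

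**Key steps.** Write $\Phi'(u)$ in terms of the Lagrange multipliers: there exist scalars $\lambda$ (and $\mu$, in the $M_3$ case) with
$$
\Phi'(u) = \lambda\, \varphi_1'(u) \quad (\text{or } \Phi'(u) = \lambda\,\varphi_1'(u) + \mu\,\varphi_2'(u) \text{ for } i=3),
$$
valid because $M_i = \varphi_i^{-1}(0)$ with $0$ a regular value (Lemma~\ref{Lema4PlanC}). Now test this identity against $u_+$ (and $u_-$). By the definition of the manifold $M_1$, $\langle \glap u, u_+\rangle = \langle \mathcal F'(u), u_+\rangle$, which is exactly the statement $\langle \Phi'(u), u_+\rangle = 0$ — wait, that is what we want to prove, so instead: test the multiplier identity against $u_+$ to get
$$
\langle \Phi'(u), u_+\rangle = \lambda \langle \varphi_1'(u), u_+\rangle
$$
(and the analogous expression with the $\mu$ term for $i=3$; here one uses $\langle \varphi_1'(u), u_-\rangle$ and $\langle \varphi_2'(u), u_+\rangle$ — the cross terms — which need to be controlled). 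The crucial input is the strict sign information already obtained in the proof of Lemma~\ref{Lema4PlanC}: $\langle \varphi_1'(u), u_+\rangle < 0$ on $M_1$, $\langle \varphi_2'(u), u_-\rangle < 0$ on $M_2$, and on $M_3$ the matrix of pairings $\langle \varphi_j'(u), u_\pm\rangle$ is diagonally dominant with negative diagonal, hence invertible. Therefore the linear system forcing $\langle \Phi'(u), u_\pm\rangle = 0$ (which holds because $u_\pm \in M_i$ means $\langle \glap u, u_\pm\rangle = \langle \mathcal F'(u), u_\pm\rangle$, i.e. $\langle \Phi'(u), u_\pm\rangle = 0$ directly from the constraint!) forces $\lambda = 0$ (and $\mu = 0$). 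Indeed the cleanest route: the constraint defining $M_i$ says precisely $\langle \Phi'(u), u_+\rangle = 0$ on $M_1$, etc. Combined with $\langle \Phi'(u), v\rangle = 0$ on $T_u M_i$ and the direct sum decomposition, we immediately get $\Phi'(u) = 0$, so no multiplier computation is even needed — the manifolds were defined so that the constraint directions are themselves in the kernel.

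**Main obstacle.** The subtle point — and the one I would write carefully — is that in the $M_3$ case the constraint only gives $\langle \Phi'(u), u_+\rangle = \langle \Phi'(u), u_-\rangle = 0$, while $T_u M_3$ has codimension $2$; one must verify $u_+$ and $u_-$ together with $T_u M_3$ really span $T_u\Wc$, which is exactly Lemma~\ref{Lema4PlanC-des}, and that $u_+, u_-$ are linearly independent (true since $u$ changes sign on $M_3$, so both are nonzero with disjoint supports). Once the direct sum decomposition and the vanishing on the two constraint directions are in hand, $\Phi'(u)$ vanishes on a generating set and hence identically. The final sentence invokes Definition~\ref{defi solution}: $\Phi'(u) = 0$ means $\langle \glap u, v\rangle = \int_\Omega f(x,u)v\,dx$ for all $v \in E$, so $u$ is a weak solution of \eqref{1.1} under $(\mathbf H_0)$–$(\mathbf H_2)$.
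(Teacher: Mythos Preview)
Your argument is correct and is exactly the standard Struwe-type argument the paper invokes by simply citing \cite{St} in lieu of a proof: the constraint defining $M_i$ is precisely $\langle \Phi'(u), u_\pm\rangle = 0$, so together with the vanishing of $\Phi'(u)$ on $T_u M_i$ (from constrained criticality) and the direct-sum decomposition of Lemma~\ref{Lema4PlanC-des}, one obtains $\Phi'(u) = 0$ on all of $\Wc$. The detour through Lagrange multipliers in your ``Key steps'' is unnecessary --- as you yourself realize midway --- but the clean route you settle on at the end is the intended one.
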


Finally, we give the proof of the existence of two constant sign solutions and a nodal solution to problem \eqref{1.1}. Here, we mainly follow \cite{Liu}.

	\begin{teo}
		Under assumptions $(\mathbf{H}_0)–(\mathbf{H}_2)$, there exists three different, nontrivial, weak solutions of problem \eqref{1.1}. Moreover,  we have that one is positive, one is negative and the other one has non-constant sign.
	\end{teo}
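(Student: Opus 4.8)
The strategy is to obtain the three solutions as minimizers of $\Phi$ on the three manifolds $K_1$, $K_2$ and $K_3$, using the Ekeland variational principle in place of the Ljusternik–Schnirelman machinery of \cite{St}. First I would establish that $\Phi$ is bounded below on each $K_i$: by Lemma \ref{Lema1PlanC} we have $\Phi(u)\geq k_2^{-1}\rho_{s,G}(u)\geq 0$ on $K_i$, so $c_i:=\inf_{K_i}\Phi$ is a well-defined nonnegative number. Next I would argue that this infimum is strictly positive: by Lemma \ref{Lema2PlanC} every $u\in K_i$ satisfies $[u]_{s,G}^{p^+}\geq D$ (and both $[u_\pm]_{s,G}^{p^+}\geq D$ when $i=3$), hence $\rho_{s,G}(u)\geq \min\{D,D^{p^-/p^+}\}>0$ is bounded away from $0$, and therefore $c_i>0$; this already guarantees that any minimizer is nontrivial.

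The core of the argument is to produce a minimizing sequence that actually converges to a minimizer. Since each $K_i$ is a complete metric space (Lemma \ref{Lema4PlanC-completo}) and $\Phi\mid_{M_i}$ is of class $C^1$ on the $C^{1,1}$ manifold $M_i$ (Lemma \ref{Lema4PlanC}), the Ekeland variational principle yields a sequence $\{u_j\}\subset K_i$ with $\Phi(u_j)\to c_i$ and $\|(\Phi\mid_{M_i})'(u_j)\|_{T_{u_j}^*M_i}\to 0$, i.e.\ a Palais–Smale sequence for the restricted functional at level $c_i$. By Lemma \ref{L28}, $\Phi\mid_{M_i}$ satisfies the Palais–Smale condition, so, passing to a subsequence, $u_j\to u_i$ strongly in $\Wc$ with $u_i\in M_i$ and $\Phi(u_i)=c_i$. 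One must check that the limit remains in $K_i$ and not merely in $M_i$: the sign constraints $u\geq 0$ (resp.\ $u\leq 0$) defining $K_1$ and $K_2$ are closed under strong $\Wc$-convergence, so $u_1\in K_1$, $u_2\in K_2$, while $K_3=M_3$ needs nothing extra. Thus $u_i$ is a minimizer of $\Phi\mid_{M_i}$, hence a critical point of $\Phi\mid_{M_i}$, and by Lemma \ref{L29} a critical point of the unrestricted $\Phi$, i.e.\ a weak solution of \eqref{1.1}.

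It remains to verify that the three solutions are distinct and have the announced sign properties. For $u_1\in K_1$ we have $u_1\geq 0$, $u_1\neq 0$ (since $c_1>0$), and one shows $u_1>0$ a.e.\ using the regularity/strong maximum principle available for $\glap$ (invoking Theorem \ref{L2.5} to get continuity, then a Harnack-type or strong minimum principle argument); similarly $u_2<0$. For $u_3\in K_3=M_3$ both ${u_3}_+\neq 0$ and ${u_3}_-\neq 0$, so $u_3$ is genuinely nodal and in particular differs from $u_1$ and $u_2$. Finally $u_1\neq u_2$ since they have opposite signs and both are nonzero. The main obstacle I anticipate is the verification that Ekeland's principle can be applied cleanly on the constrained manifolds $M_i$ and that the resulting sequence is a genuine Palais–Smale sequence for $\Phi\mid_{M_i}$ in the sense required by Lemma \ref{L28}; this requires the uniform-continuity-of-the-projection statement in Lemma \ref{Lema4PlanC-des} so that the restricted gradient is controlled, and some care is needed because $M_i$ is only $C^{1,1}$ (not $C^2$), so one cannot directly use a $C^2$ Finsler-manifold deformation argument — the Ekeland approach is precisely what sidesteps this difficulty. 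A secondary technical point is promoting $u_1\geq0$ to $u_1>0$, which needs the Hölder regularity from Theorem \ref{L2.5} together with a nonlocal strong maximum principle.
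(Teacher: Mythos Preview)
Your overall architecture is correct and matches the paper's: minimize $\Phi$ on each $K_i$ via Ekeland, feed the resulting Palais--Smale sequence into Lemma~\ref{L28}, and conclude with Lemma~\ref{L29}. However, there is a genuine gap in the step where you invoke Ekeland. You apply the Ekeland principle on the complete metric space $K_i$ and assert that this produces a sequence with $\|(\Phi\mid_{M_i})'(u_j)\|\to 0$. But for $i=1,2$ the set $K_i$ is a \emph{closed} subset of the manifold $M_i$ (the constraint $u\geq 0$, resp.\ $u\leq 0$, is closed), not an open one. Ekeland on $K_i$ only gives the variational inequality $\Phi(u_j)\leq \Phi(w)+\varepsilon_j[u_j-w]_{s,G}$ for competitors $w\in K_i$; it says nothing about perturbations in $M_i\setminus K_i$, so you cannot conclude that the full tangential derivative along $M_i$ vanishes. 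A minimizer of $\Phi$ over a closed constraint set need not be a critical point of the ambient functional.

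The paper (following \cite{Liu}) repairs this by enlarging $K_1$ to an \emph{open} neighbourhood $U=\{u\in M_1:[u_-]_{s,G}<c\}$ in $M_1$, where $c\in(0,1)$ is the constant from Lemma~\ref{Lema3PlanC}. One first replaces a minimizing sequence $u_j$ for $\inf_{\overline{U}}\Phi$ by $u_{j,+}\in K_1$, which is still minimizing thanks to the strict inequality \eqref{ineq u pm}. Ekeland on the complete set $\overline{U}$ then yields $v_j$ close to $u_{j,+}$, and the key extra step is to show $v_j\notin\partial U$: if $[v_{j,-}]_{s,G}=c$ then Lemma~\ref{Lema3PlanC} forces $\Phi(-v_{j,-})\geq c^{p^++1}>\varepsilon_j$, and combining with \eqref{ineq u pm} and $v_{j,+}\in K_1$ gives $\Phi(v_j)>m+\varepsilon_j$, a contradiction. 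Once $v_j$ lies in the open set $U\subset M_1$, the Ekeland inequality does control all $M_1$-directions and yields $(\Phi\mid_{M_1})'(v_j)\to 0$. This neighbourhood-and-boundary-exclusion argument is the missing ingredient in your plan; for $K_3=M_3$ the issue does not arise. Finally, the paper does not attempt to upgrade $u_1\geq 0$ to $u_1>0$ via a strong maximum principle; ``positive'' in the statement is used in the sense of $u_1\in K_1$, i.e.\ nonnegative and nontrivial, so your last paragraph introduces an unnecessary complication.
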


	\begin{proof}
	 From Lemma \ref{Lema3PlanC} and Poincar\'e inequality (Proposition 3.2 in \cite{Sa}), there exists a constant $0<c<1$ such that
	$$\Phi(u) \geq c[u]^{p^{+}}_{s, G}$$when $[u_{-}]_{s, G} < c$. By continuity of the projection $P(u)= u_{-}$, the set $U=\left\lbrace u \in M_1: [u_{-}]_{s, G} < c\right\rbrace$ is open, contains $K_1$ and its closure $\overline{U}$ is complete since $M_1$ is complete and $\overline{U}$ is closed. Since $\Phi$ is bounded from below in $\overline{U}$, we let
	$$m:= \inf_{\overline{U}}\Phi(u).$$ Take $u_j$ a minimizing sequence. By \eqref{ineq u pm}, we get that $u_{j, +}$ is also a minimizing sequence for $\Phi$ and moreover $u_{j, +}\in K_1$. For all $j$ large enough, take $\varepsilon_j>0$ such that
	$$\Phi(u_{j, +}) < m+\varepsilon_j, \quad \varepsilon < c^{p_++1}.$$Also, put $\delta_j:=\sqrt{\varepsilon_j}$. By the Ekeland Variational Principle, there is a sequence $v_j \in \overline{U}$ such that
	\begin{equation}\label{sequence vj}
	[u_{j,+}-v_j]_{s, G} \leq \delta_j, \quad \Phi(v_j)\leq \Phi(u_{j,+})<m+\varepsilon_j
\end{equation}and
\begin{equation}\label{sequence vj 2}
\Phi(v_j)< \Phi(w)+\dfrac{\varepsilon_j}{\delta_j}[v_j-w]_{s, G}, \quad w \in \overline{U}, w \neq v_j.
\end{equation}We will prove next that $v_j \in U$. To get a contradiction, assume that $[v_{j, -}]_{s, G} = c$. Then,
$$\Phi(-v_{j, -}) \geq c[v_{j, -}]_{s, G}^{p^+}=c^{p^++1} > \varepsilon_j.$$On the other hand, since $v_{j, +}\in K_1$, we have from \eqref{ineq u pm}
$$\Phi(v_j)> \Phi(v_{j, +})+ \Phi(-v_{j, -}) \geq m+ \varepsilon_j,$$which contradicts \eqref{sequence vj}. Thus, 	 $v_j \in U$.
Next, since  $v_j \in U$ with $U$ open,  by \eqref{sequence vj 2}, there holds $(\Phi_{\vert_{M_1}})'(v_j) \to 0$. By Lemma \ref{L28}, $v_j$ contains a convergence subsequence, also denoted $v_j$, with limit $v$. From \eqref{sequence vj}, we get $u_{j, +} \to v, $ and from the completeness of $K_1$, it follows $v \in K_1$ and so $v \in U$ ($v$ is in the interior of $M_1$). Moreover, by continuity, $(\Phi_{|_{M_1}})'(v)=0$. By Lemma \ref{L29}, $v$ is a critical point of $\Phi$ as well. Similarly, we can state the result for $K_2$ and $K_3$. 
%		First let us check that the functional $\Phi \mid _{K_{i}}$ verifies the hypotheses of the variational Principle of Ekeland \cite{Ekeland}.
%		Observe that by Lemma \ref{Lema1PlanC}, $\Phi \geq 0$.  Hence,  $\Phi$ is bounded below over $K_{i}$. Moreover, $\Phi \in C^1(K_i)$, where $K_i$ are Banach spaces. Then by Ekeland's variational Principle, there exists $\left\lbrace v_{i, k} \right\rbrace _{k \in \N} \in K_{i}$, such that as $k \to \infty$,
%		\begin{enumerate}
%			\item $\Phi (v_{i, k}) \rightarrow \inf_{K_{i}} \Phi$.
%			\item $\left( \Phi \mid _{K_{i}} \right)' (v_{i, k}) \rightarrow 0$.
%		\end{enumerate}
%		By Lemma \ref{L28},  $\left( K_{i}, \Phi \mid _{K_{i}} \right)$ verifies the Palais-Smale condition, then  $v_{i, k}$ has a  subsequence converging to some $v_i \in K_i.$ Observe that by (1) and (2) above,
%		$$\left( \Phi \mid _{K_{i}} \right)' (v_i)=0 \quad \text{ and }\quad \inf_{K_{i}} \Phi= \Phi(v_i).$$ Hence, $v_i$ is a critical point for $\Phi \mid _{K_{i}}$ for $i=1, 2,3$. Consequently,  by Lemma \ref{L29},  the unrestricted $\Phi$ has three critical points which are, by the sign restriction of the submanifold, one positive, one negative and one with non-constant sign.
	\end{proof}
\section*{Conflict of interest}
On behalf of all authors, the corresponding author states that there is no conflict of interest.
\section*{Acknowledgements}
A.S. and M.J.S.M are partially supported by ANPCyT under grants PICT 2017-0704, PICT 2019-3837 and by Universidad Nacional de San Luis under grants PROIPRO 03-2420. P. O. is supported by Proyecto Bienal  B080 Tipo 1 (Res. 4142/2019-R). The authors thank the referee for her/his useful comments to improve the manuscript.

\end{document}